\documentclass[11pt,reqno]{amsart}
\usepackage{latexsym,amssymb,amsfonts,mathrsfs}
\usepackage{amsthm}
\usepackage{amsmath}
\usepackage[a4paper,bottom=2.5cm]{geometry}
\usepackage{a4wide}
\usepackage{enumerate}
\usepackage{comment}
\usepackage[latin1, utf8]{inputenc}
\usepackage{eufrak}
\usepackage[usenames]{color}
\usepackage{graphicx}
\usepackage{lmodern,xfrac,xcolor}
\usepackage[noadjust]{cite}

\usepackage{mathtools,enumerate}
\usepackage[toc]{appendix}
\mathtoolsset{showonlyrefs}
\usepackage{scalerel}
\usepackage{relsize}

\usepackage{tikz}
\usetikzlibrary{positioning}

\usepackage{mathtools,accents}

\usepackage[unicode,plainpages=false]{hyperref}
\mathtoolsset{showonlyrefs}

\newtheorem{theo}{Theorem}
\newtheorem{lem}[theo]{Lemma}
\newtheorem{prop}[theo]{Proposition}
\newtheorem{cor}[theo]{Corollary}

\theoremstyle{definition}
\newtheorem{defi}[theo]{Definition}

\newtheorem{assumptions}{Assumptions}

\newtheorem{exs}[theo]{Examples}

\theoremstyle{remark}
\newtheorem{rem}[theo]{Remark}

\newtheorem{notat}[theo]{Notation}

\newcommand{\ve}{\varepsilon}

\newcommand{\dx}{\,\mathrm{d}x}

\newcommand{\mi}{\mathrm{i}}
\newcommand{\la}{\lambda}
\newcommand{\me}{\mathsf{e}}

\newcommand{\mv}{\mathsf{v}}
\newcommand{\mw}{\mathsf{w}}
\newcommand{\mE}{\mathsf{E}}
\newcommand{\mV}{\mathsf{V}}
\newcommand{\mB}{\mathsf{B}}
\newcommand{\mG}{\mathsf{G}}

\newcommand{\ea}{\mathfrak{a}}

\newcommand{\Id}{\mathrm{Id}}

\newcommand{\comp}{\mathbb{C}}
\newcommand{\real}{\mathbb{R}}
\newcommand{\nat}{\mathbb{N}}
\newcommand{\bbz}{\mathbb{Z}}

\newcommand{\bbP}{\mathbb{P}}

\newcommand{\myhat}[1]{%
  \mathchoice
    {\widehat{#1}}
    {\widehat{#1}}
    {\text{\scriptsize$\widehat{#1}$}}
    {\hat{#1}}
}

\newcommand{\mcB}{\mathcal{B}}
\newcommand{\mcD}{\mathcal{D}}

\newcommand{\mcH}{\mathcal{H}}

\newcommand{\mcM}{\mathcal{M}}
\newcommand{\mcP}{\mathcal{P}}
\newcommand{\mcL}{\mathcal{L}}
\newcommand{\mcQ}{\mathcal{Q}}
\newcommand{\mrQ}{\mathrm{Q}}

\newcommand{\mbQs}{\mathbf{Q_{*}}}
\newcommand{\mbKs}{\mathbf{K_{*}}^{D,m,M}}
\newcommand{\wmrQ}{\myhat{\mrQ}}
\newcommand{\wmcQ}{\widehat{\mcQ}}
\newcommand{\wmG}{\widehat{\mG}}
\newcommand{\wG}{\widehat{G}}
\newcommand{\wL}{\hat{L}}
\newcommand{\wW}{\hat{W}}
\newcommand{\wbeta}{\hat{\beta}}
\newcommand{\wU}{\hat{U}}
\newcommand{\wx}{\hat{x}_0}
\newcommand{\wmv}{\hat{\mv}}
\newcommand{\wmV}{\hat{\mV}}
\newcommand{\wmE}{\hat{\mE}}

\newcommand{\wz}{\hat{z}}

\DeclareMathOperator{\Lip}{Lip}

\numberwithin{equation}{section} \numberwithin{theo}{section}

\begin{document}

\title{Benjamini--Schramm limit of the heat semigroup on quantum graphs}
\author{Mihály Kovács}
\address{Faculty of Information Technology and Bionics\\P\'azm\'any P\'eter Catholic University,
		Budapest, Hungary and
		Department of Differential Equations, Faculty of Natural Sciences, Budapest University of Technology and Economics,
		Budapest, Hungary and
		Chalmers University of Technology and University of Gothenburg,
		Gothenburg, Sweden}
	\email{mkovacs@math.bme.hu}
	
	\author{Eszter Sikolya}
	\address{Department of Applied Analysis and Computational Mathematics\\
		E\"otv\"os Lor\'and University\\
		Budapest, Hungary\\
		Alfr\'ed R\'enyi Institute of Mathematics,\\ 
		Budapest, Hungary}
	\email{eszter.sikolya@ttk.elte.hu}
\date{July 2026}

\begin{abstract}
We study the behaviour of heat semigroups on quantum graphs under Benjamini--Schramm convergence. For quantum graphs with uniformly bounded geometry, equipped with continuity and Kirchhoff vertex conditions, we show that the heat semigroup, transported to a common Hilbert space by a canonical breadth-first identification of the edges, depends continuously on the underlying rooted quantum graph with respect to a local Benjamini--Schramm-type metric. As a consequence, root-averaged pairings of the semigroup converge along Benjamini--Schramm convergent sequences of finite quantum graphs. Combining this with a Trotter--Kato-type approximation of the semigroup by semigroups on metric balls, we obtain a double-limit theorem interchanging the truncation radius and the graph limit.
\end{abstract}

\subjclass[2020]{Primary: 35R02, 47D06, 05C80; Secondary: 81Q35}
\keywords{Quantum graph, Hamiltonian operator, Benjamini--Schramm limit, operator semigroups}

\maketitle
\pagestyle{plain}

\section{Introduction}

In the paper \cite{BS01} Benjamini and Schramm introduced limit objects of sparse graphs, that is, graph sequences having uniformly bounded degrees. Based on this concept, an extensive literature has arisen, see e.g.~\cite{ATV2011} and the book of Lovász \cite{LovaszBook} and references therein. 

Parallel to this, the theory of quantum graphs, that is, metric graphs supplied with second order Hamiltonian operators, developed rapidly. We only refer here to the monographs \cite{BeKu} and \cite{Mugnolobook} and the survey paper \cite{Be17}. 

In 2021, Anantharaman, Ingremeau, Sabri and Winn in \cite{AISW2021} introduced Benjamini--Schramm limit of quantum graphs, and proved the convergence of the integrals of the Hamiltonian's Green's function along such sequences.  In \cite{AISW2021JMPA} the same authors considered quantum graphs converging in the sense of Benjamini--Schramm to the random rooted infinite quantum tree, and showed that under appropriate assumptions, the eigenfunctions of the Hamiltonians with eigenvalues lying in an interval are spatially delocalized.  However, we are not aware of any further remarkable result concerning Benjamini--Schramm limit of quantum graphs.

Kovács and Sikolya together with their coauthors investigated quantum graphs based on operator semigroup techniques, see e.g.~\cite{KMS07}, and quantum graphs with noise, cf.~\cite{KS21JEE}, \cite{KS23EJP}. The question naturally arises how semigroups associated with quantum graph sequences behave when passing to the Benjamini--Schramm limit. Limits of semigroups form a well-researched area, see e.g.~\cite{ItoKappel98} and \cite[Sec.~III.4]{EN00}, since they often appear in applications, e.g.~when dynamical systems are approximated by simpler (e.g.~finite-dimensional) systems. In this paper we successfully combine the concepts of semigroup sequences and quantum graph limits.\\

The paper is organized as follows. In Section \ref{sec:heatsgronquantumgraphs} we consider quantum graphs carrying second order operators (Hamiltonians) equipped with general boundary conditions in the vertices described by unitary matrices. We also introduce the corresponding Hilbert spaces of the edge functions and the strongly continuous semigroup associated with a quantum graph in Proposition \ref{prop:AQgen}.

In Section \ref{sec:approxheatsgr} we first define rooted quantum graphs and introduce the restriction of the associated semigroup to the $r$-ball around the root. Using a Trotter--Kato type approximation result of Ito and Kappel from \cite{ItoKappel98}, in Theorem \ref{thm:heatsgrconv} we show that the restricted semigroups on the $r$-balls converge strongly to the semigroup on the quantum graph when $r$ tends to infinity.

Section \ref{sec:BSlimitquantumgraphs} contains our main results. Based on \cite{AISW2021}, we introduce a metric on the equivalence classes of rooted quantum graphs in Definition \ref{defi:metricrooted}. In Theorem \ref{prop:Ffgtrcontbdd} we show that the function $F$ defined as the weak form of the semigroup associated with an (equivalence class of) rooted quantum graph is bounded and continuous with respect to this metric on a subset of  (equivalence classes of) rooted quantum graphs equipped with standard Kirchhoff-boundary conditions and  uniformly bounded data. Using \cite[Cor.~3.7]{AISW2021} we are then able to claim in Corollary \ref{cor:FBSsubseq} that for any sequence of rooted quantum graphs $(\mrQ_N)$ from this subset, there is a subsequence such that the integrals of $F$ as choosing the root uniformly on $\mrQ_N$ converge to the integral of $F$ with respect to the measure being the Benjamini--Schramm limit of the sequence.

In Theorem \ref{thm:QNdoublelimit} and Corollary \ref{cor:QNkdoublelimit} we combine the result on convergence of the semigroups on balls with the result on Benjamini--Schramm limit of semigroups on quantum graphs.

In Section \ref{sec:examplesBSlimit} we list the most common examples for graph sequences to which our theory applies.

\section{Heat semigroup on quantum graphs}\label{sec:heatsgronquantumgraphs}

\subsection{Quantum graphs}

A quantum graph $\mcQ$ is a metric graph endowed with a second order differential operator. For the next terminology, we follow \cite[Sec.~2]{AISW2021}.

Let  $G=G(\mV,\mE)$ be a connected \emph{combinatorial} or \emph{discrete graph} consisting of a  countable set of vertices $\mV = \{\mv\}$ and edges $\mE = \{\me\}$ connecting (some of) the vertices. We also assume that for each vertex $\mv$, its degree $d(\mv)$ is finite and positive, and $G$ contains no loops (edges from a vertex to itself). 

Denote by $\mcB = \mcB(\mG)$ the set of oriented edges (or bonds), so that  $|\mcB| = 2|\mE|$ (an edge $\me=\{\mv_1,\mv_2 \}\in \mE$ gives rise to two oriented edges 
$(\mv_1, \mv_2), (\mv_2, \mv_1) \in \mcB$). If $b \in \mcB$, we  denote by $\hat{b}$ the reverse bond. We write $o(b)$ for the origin of $b$ and $t(b)$ for the 
terminus of $b$, and  $\me(b) \in \mE$ denotes the edge obtained by forgetting  the orientation of $b$. 

We define the \emph{length function} as a map $L:\mE\to (0,+\infty)$ and denote $L(\me)=L_{\me}$. For $b\in\mcB$, we denote $L_b := L(\me(b))$. 

The \emph{metric graph} is then defined as the set of equivalence classes
\begin{equation}
    \mG := \{ (b, x) ; b \in \mcB, x \in (0, L_b) \} / \simeq \ ,
\end{equation}
where the equivalence relation is given by
\begin{equation}
    (b, x) \simeq (b', x')\text{ if }b' = \hat{b}\text{ and }x' = L_b - x.
\end{equation}
Note that the points of $\mG$ correspond to the interior points of the edges; in particular, the vertices of $G$ are not points of $\mG$.
A function $z : \mG \to \comp$ can be thus written as $z = (z_b)_{b \in \mcB}$, where $z_b(x) := z(b, x)$ satisfying the condition $z_{\hat{b}}(L_b - x) = z_b(x)$.
If $z$ is measurable, we define
\begin{equation}\label{eq:integralbound}
    \int_{\mG} z(\mathbf{x}) \, \mathrm{d}\mathbf{x} := \frac{1}{2} \sum_{b \in \mcB} \int_0^{L_b} z(b, x) \, \mathrm{d}x \ ,
\end{equation}
where the points of $\mG$ are denoted by $\mathbf{x}$. (The $\frac{1}{2}$ factor comes from the fact that each non-oriented edge is counted twice.)

We refer here more precisely to the definition \cite[Def.~2.2]{AISW2021}.

\begin{defi}\label{defi:quantumgraph}
A \emph{quantum graph} $\mcQ =(\mG, W, \beta, U)$ is the data of:
\begin{itemize}
	\item A metric graph $\mG=(\mV,\mE,L)$.
\item A potential $W=(W_{b})_{b\in\mcB} \in \bigoplus_{b\in\mcB}C([0,L_b])$, $W_{b}\geq 0$ satisfying $W_{\hat{b}}(L_b - x) = W_b(x)$, $b\in\mcB$.
\item For each $\mv \in \mV$, a labeling of the oriented edges starting at $\mv$, that is, a bijective map
\begin{equation}\label{eq:betav}
    \beta^{\mv} : \{1, \dots, d(\mv)\} \longrightarrow \{b \in \mcB : o(b) = \mv\}.
\end{equation}
\item For each $\mv\in \mV$, a unitary matrix $U_{\mv}\in \mcM_{d(\mv)}(\comp)$ describing the vertex conditions in $\mv$ for the domain of the operator associated with $\mcQ$, where $d(\mv)$ denotes the degree of the vertex $\mv$.
\end{itemize}
\end{defi}

\begin{rem}\label{rem:metricgraphbonds}
In this paper, to make notations simpler, our terminology follows \cite[Chap.~1]{BeKu} (see also \cite{Be17} and \cite{Mugnolobook}). Thus, we simply consider edges $\me\in\mE$ and functions defined on them as 
\[z=\left(z_{\me}\right)_{\me\in \mE}.\]
Hence, for a measurable function $z$ on $\mG$, instead of \eqref{eq:integralbound} we will write
\begin{equation}
    \int_{\mG}z(x)\dx \coloneqq \sum_{\me\in\mE}\int_0^{L_{\me}}z_{\me}(x)\dx_{\me},
\end{equation}
where $\dx_{\me}$ denotes the Lebesgue-measure on the edge $\me$.

However, we always keep in mind and make use of the precise structure of the metric graph $\mG$ including bonds such that e.g.~we can consider arbitrary oriented parametrization of the edges. Where necessary, we clarify the concepts in accordance with this.

In Sections \ref{sec:heatsgronquantumgraphs} and \ref{sec:approxheatsgr} we allow complex-valued functions and complex unitary vertex matrices; all function spaces are understood over $\comp$, and all scalar products are sesquilinear (linear in the first and conjugate-linear in the second argument). In Section \ref{sec:BSlimitquantumgraphs} we will restrict our attention to quantum graphs with continuity and Kirchhoff vertex conditions, for which the vertex matrices are real; from that point on we work with real-valued functions and real Hilbert spaces, see the discussion preceding the definition of the class $\mbKs$ there.
\end{rem}

In the sequel, we will always make the following assumption
\begin{assumptions}\label{ass:main}
\begin{equation}\label{eq:graphparameterbdd}
    \begin{split}
    d(G) &\coloneqq \sup_{\mv\in \mV}d(\mv) <\infty,\\
\underline{L}(\mcQ) &\coloneqq\inf_{\me\in \mE}L_{\me} > 0,\\
\overline{L}(\mcQ) &\coloneqq\sup_{\me\in \mE}L_{\me} <\infty,\\
\|W\|_{\infty} &\coloneqq \sup_{{\me}\in\mE}\|W_{\me}\|_{\infty} <\infty,\\
S&\coloneqq \sup_{\mv\in\mV}\|\Lambda_{\mv}\|<\infty,
\end{split}
\end{equation}
where  $\Lambda_{\mv}$ is the Cayley-transform
\begin{equation}\label{eq:LambdavfromUv}
    \Lambda_{\mv}=-\mi\left(U_{\mv}+\Id\right)_R^{-1}\left(U_{\mv}-\Id\right)_R,
\end{equation}
where $\left(U_{\mv}\pm\Id\right)_R$ denotes the restriction of $U_{\mv}\pm\Id$ to the direct sum of the eigenspaces corresponding to the eigenvalues of $U_{\mv}$ other than $\pm1$.

\end{assumptions}

\begin{defi}
A quantum graph $\mcQ =(\mG, W,\beta, U)$ will be called \emph{finite} if $|\mV|, |\mE| <\infty $. Its \emph{total length} is then defined as
\[\mcL(\mcQ) =\sum_{\me\in \mE}L_{\me}.\]
\end{defi}

For a quantum graph $\mcQ =(\mG, W,\beta, U)$ we define the following spaces. Denote by $H^k(\me)$ the Sobolev space of functions on the segment $\me$ whose distributional derivatives up to order $k$ belong to $L^2(\me)$.

\begin{defi}\label{defi:mcHmch2}
We consider the Hilbert spaces
\begin{enumerate}
	\item 
	\begin{equation}\label{eq:mcH}
		\mcH_{\mcQ}= \left\{f=(f_{\me})_{\me\in\mE}\in\bigoplus_{\me\in\mE}L^2(\me): \|f\|^2_{L^2(\mG)}\coloneqq\sum_{\me\in\mE}\|f_{\me}\|^2_{L^2(\me)}<\infty\right\},
	\end{equation}
    
\item	\begin{equation}\label{eq:mcH2}
\mcH^2_{\mcQ}=\left\{f=(f_{\me})_{\me\in\mE}\in\bigoplus_{\me\in\mE}H^2(\me): \|f\|^2_{H^2(\mG)}\coloneqq\sum_{\me\in\mE}\|f_{\me}\|^2_{H^2(\me)}<\infty\right\},\end{equation}
	\end{enumerate}
The	finiteness conditions for the norms are needed for infinite graphs only. 
\end{defi}	

To write down the vertex conditions for the operator associated with the quantum graph, for a given function $z\in\mcH^2_{\mcQ}$, and for each $\mv\in\mV$, we introduce the following notation.

We set
\begin{equation}\label{eq:FvFvvbond}
Z(\mv) := \left( z(o_{\beta^{\mv}(j)}) \right)_{j=1}^{d(\mv)} , \qquad Z'(\mv) := \left( z'(o_{\beta^{\mv}(j)}) \right)_{j=1}^{d(\mv)}\in\comp^{d(\mv)},
\end{equation}
where, in the bond notation of Remark \ref{rem:metricgraphbonds}, $z(o_{b}) := z_{b}(0)$ and $z'(o_{b}) := z_{b}'(0)$ denote the value and the derivative of $z$ at the origin of the bond $b$. In the definition of $Z'(\mv)$, this corresponds to assuming that derivatives are taken in the directions away from the vertex $\mv$, cf.~\cite[Sec.~1.4.]{BeKu}.

For the unitary matrices $U_{\mv}$, $\mv\in\mV$ from Definition \ref{defi:quantumgraph} define
\begin{equation}\label{eq:A1vA2v}
A_{\mv}\coloneqq \mi\left(U_{\mv}-\Id\right),\quad B_{\mv}\coloneqq U_{\mv}+\Id, 
\end{equation}
and for $z\in\mcH^2_{\mcQ}$, we set the vertex conditions
\begin{equation}\label{eq:vertexcond}
A_{\mv}Z(\mv)+B_{\mv}Z'(\mv)=0\text{ for all }\mv\in\mV.
\end{equation}

It is straightforward, see \cite[Examp.~2.3.(a)]{AISW2021}, that letting
\begin{equation}\label{eq:UvconKN}
U_{\mv}=\frac{2}{d(\mv)}\cdot\mathbf{1}-\Id,
\end{equation}
where $\mathbf{1}$ is the matrix with all entries equal to $1$, condition \eqref{eq:vertexcond} turns to the following ones, where  $\mE_{\mv}$ denotes the set of edges incident to the vertex $\mv$:
\begin{enumerate}
	\item \textbf{continuity} in the vertices, that is,
	\[\text{for all }\mv\in\mV\text{ and }\me,\me'\in\mE_{\mv}: z_{\me}(\mv)=z_{\me'}(\mv),\]

and
\item \[\text{for all }\mv\in \mV: \sum_{\me\in\mE_{\mv}}z_{\me}'(\mv)=0\]
which is called \textbf{Kirchhoff's} or \textbf{Kirchhoff--Neumann-condition} being a generalization of the usual Neumann-condition. This condition turns to the one-dimensional Neumann-condition in vertices of degree one.
\end{enumerate}

We are now ready to define the \emph{Hamiltonian} operator of the quantum graph $\mcQ =(\mG, W,\beta, U)$ as follows.

\begin{defi}
Let $\left(A_{\mcQ},\mcD(A_{\mcQ})\right)$ be the operator defined on  $\mcH_{\mcQ}^2$ as
\begin{equation}\label{eq:opAQoriginal}
\begin{split}
(A_{\mcQ} z)_{\me}&= z_{\me}''-W_{\me}z_{\me},\quad \me\in\mE,\\
\mcD(A_{\mcQ})&=\left\{z\in\mcH^2_{\mcQ}:A_{\mv}Z(\mv)+B_{\mv}  Z'(\mv)=0,\, \mv\in\mV\right\}.
\end{split}
\end{equation}
\end{defi}

For technical reasons, in the following we will always rescale the edges of the quantum graph and the corresponding function spaces. 

\begin{defi}\label{defi:mcHQ}
    For a function $z$ on the metric graph $\mG=(\mV,\mE,L)$, we define its rescaling as
    \begin{equation}\label{eq:fctnrescaled}
        \tilde{z}_{\me}(x)\coloneqq z_{\me}(x\cdot L_{\me}),\quad x\in [0,1],\; \me\in\mE.
    \end{equation}
We define the Hilbert space of the rescaled functions as
    \begin{equation}\label{eq:mcHtilde}
		\widetilde{\mcH}_{\mcQ}= \left\{\tilde{z}=(\tilde{z}_{\me})_{\me\in\mE}\in\bigoplus_{\me\in\mE}L^2_{\mcQ}(\me,w): \|\tilde{z}\|_{\widetilde{\mcH}_{\mcQ}}^2\coloneqq \sum_{\me\in\mE}\|\tilde{z}_{\me}\|^2_{L^2_{\mcQ}(\me,w)}<\infty\right\},
	\end{equation}
    where $\tilde{z}_{\me}\in L^2_{\mcQ}(\me,w)$ if and only if $\tilde{z}_{\me}\in L^2(\me)$ and
    \begin{equation}        
    \|\tilde{z}_{\me}\|^2_{L^2_{\mcQ}(\me,w)}=\int_0^1|\tilde{z}_{\me}(x)|^2 L_{\me}\dx.
    \end{equation}
    We also introduce the scalar product obtaining the norm on $\widetilde{\mcH}_{\mcQ}$ as
\begin{equation}        
    \langle\tilde{z},\tilde{u}\rangle_{\widetilde{\mcH}_{\mcQ}}=\sum_{\me\in\mE}\int_0^1\tilde{z}_{\me}(x)\overline{\tilde{u}_{\me}(x)} L_{\me}\dx,\quad \tilde{z},\tilde{u}\in\widetilde{\mcH}_{\mcQ}.
    \end{equation}    
    In this way, if we denote the space from \eqref{eq:mcH} by $\mcH_{\mcQ}$,  the mapping
    \begin{equation}\label{eq:Psiisometry}
       \Phi\colon\mcH_{\mcQ}\to\widetilde{\mcH}_{\mcQ}, \quad   \Phi z\coloneqq \tilde{z}
    \end{equation}
    is a unitary isomorphism between $\mcH_{\mcQ}$ and $\widetilde{\mcH}_{\mcQ}$ according to Assumption \ref{ass:main} and \cite[p.~67]{Mugnolobook}.
\end{defi}

We also have to define the rescaled form of the Hamiltonian operator \eqref{eq:opAQoriginal} on $\widetilde{\mcH}_{\mcQ}$ as follows.

\begin{defi}\label{defi:opAQ}
Let $\left(\widetilde{A}_{\mcQ},\mcD(\widetilde{A}_{\mcQ})\right)$ be the operator defined on  $\widetilde{\mcH}_{\mcQ}$ as
\begin{align}
(\widetilde{A}_{\mcQ} \tilde{z})_{\me}&=\frac{1}{L_{\me}^2} \tilde{z}_{\me}''-\widetilde{W}_{\me}\tilde{z}_{\me},\quad \me\in\mE,\label{eq:opAQ}\\
\mcD(\widetilde{A}_{\mcQ})&=\left\{\tilde{z}\in\widetilde{\mcH}_{\mcQ}^2:A_{\mv}\tilde{Z}(\mv)+B_{\mv} \cdot \mathrm{diag}\left(\frac{1}{L_{\me}}\right)_{\me\in\mE_{\mv}} \cdot\tilde{Z}'(\mv)=0,\, \mv\in\mV\right\}\label{eq:domopAQ}
\end{align}
and $\tilde{Z}(\mv)$, $\tilde{Z}'(\mv)$ are defined in \eqref{eq:FvFvvbond} for the rescaled functions $\tilde{z}$, and 
\begin{equation}\label{eq:mcH2tilde}
		\widetilde{\mcH}^2_{\mcQ}= \left\{\tilde{z}=(\tilde{z}_{\me})_{\me\in\mE}\in\bigoplus_{\me\in\mE}H^2_{\mcQ}(\me,w): \sum_{\me\in\mE}\|\tilde{z}_{\me}\|^2_{H_{\mcQ}^2(\me,w)}<\infty\right\},
	\end{equation}
    with $\tilde{z}_{\me}\in H_{\mcQ}^2(\me,w)$ holds if and only if $\tilde{z}_{\me}\in H_{\mcQ}^2(\me)$ and
    \begin{equation}        
    \|\tilde{z}_{\me}\|^2_{H^2_{\mcQ}(\me,w)}=\|\tilde{z}_{\me}\|^2_{L^2_{\mcQ}(\me,w)}+\|\tilde{z}'_{\me}\|^2_{L^2_{\mcQ}(\me,w)}+\|\tilde{z}''_{\me}\|^2_{L^2_{\mcQ}(\me,w)},
    \end{equation}
 cf.~\eqref{eq:mcH2} and \eqref{eq:mcHtilde}.
\end{defi}

\textbf{Convention.} From now on, for a quantum graph $\mcQ =(\mG, W,\beta, U)$ we always consider the functions $z$ and $W$ on $\mG$ in their rescaled form such that the coordinates of $z_{\me}$ and  $W_{\me}$, $\me\in\mE$, are defined on $[0,1]$, see \eqref{eq:fctnrescaled}.  Accordingly, we also consider the Hamiltonian operator as in Definition \ref{defi:opAQ} on the space $\widetilde{\mcH}_{\mcQ}$. To simplify notations, hereinafter we omit the tilde sign.\medskip

\begin{rem}
By \cite[Thm.~1.4.4.]{BeKu}, the following three definitions of $\mcD(A_{\mcQ})$ are equivalent:
\begin{enumerate}
    \item The domain $\mcD(A_{\mcQ})$ is defined as in \eqref{eq:domopAQ} with matrices $A_{\mv}$ and $B_{\mv}$ satisfying the following two conditions:
\begin{equation}
    \begin{split}
        &\mathrm{rank}[A_{\mv}\, B_{\mv}]\text{ is maximal},\\
        &A_{\mv} B_{\mv}^*\text{ is self-adjoint}
    \end{split}
\end{equation}
for each $\mv\in\mV$;
\item For every $\mv\in\mV$ there exists a $d(\mv)\times d(\mv)$ unitary matrix $U_{\mv}$ such that the boundary conditions in \eqref{eq:domopAQ} can be written as
\begin{equation}
    \mi \left(U_{\mv}-\Id\right)Z(\mv)+\left(U_{\mv}+\Id\right)\cdot \mathrm{diag}\left(\frac{1}{L_{\me}}\right)_{\me\in\mE_{\mv}} \cdot Z'(\mv)=0;
\end{equation}
\item For every $\mv\in\mV$ there are three orthogonal
(and mutually orthogonal) projections $P_{D,\mv}$, $P_{N,\mv}$ and $P_{R,\mv}=\Id-P_{D,\mv}-P_{N,\mv}$ acting on $\comp^{d(\mv)}$ and an invertible self-adjoint operator $\Lambda_{\mv}$ -- the Cayley-transform of $U_{\mv}$ defined in \eqref{eq:LambdavfromUv} -- acting in the
subspace $P_{R,\mv}\comp^{d(\mv)}$, such that the boundary values of $z$ satisfy
\begin{equation}\label{eq:domAQprojections}
    \begin{cases}
        P_{D,\mv}Z(\mv)=0, & \text{"Dirichlet part"},\\
        P_{N,\mv}\cdot \mathrm{diag}\left(\frac{1}{L_{\me}}\right)_{\me\in\mE_{\mv}} \cdot Z'(\mv)=0, & \text{"Neumann part"},\\
        P_{R,\mv}\cdot \mathrm{diag}\left(\frac{1}{L_{\me}}\right)_{\me\in\mE_{\mv}} \cdot Z'(\mv)=\Lambda_{\mv}P_{R,\mv}Z(\mv), & \text{"Robin part"}.
    \end{cases}
\end{equation}
\end{enumerate}
By \cite[Rem.~1.4.5., Lem.~1.4.7.]{BeKu}, we also know that in this case the unitary matrices
\begin{equation}\label{eq:UvfromAB}
    U_{\mv}=-\left(A_{\mv}-\mi B_{\mv}\right)^{-1}\left(A_{\mv}+\mi B_{\mv}\right)
\end{equation}
are defined uniquely, however, the matrices $A_{\mv}$ and $B_{\mv}$ are not.
\end{rem}

\begin{rem}\label{rem:contwKNcond}
For the rescaled functions, the continuity and Kirchhoff's condition reads as
\begin{enumerate}
	\item \textbf{continuity} in the vertices, that is,
	\begin{equation}\label{eq:wcontcond}
	    \text{for all }\mv\in\mV\text{ and }\me,\me'\in\mE_{\mv}: z_{\me}(\mv)=z_{\me'}(\mv),
	\end{equation}
hence, $z\in C(\mG)$ holds; and
\item the \textbf{weighted Kirchhoff's condition} as
\begin{equation}\label{eq:wKNcond}
    \text{for all }\mv\in \mV: \sum_{\me\in\mE_{\mv}}\frac{1}{L_{\me}}z_{\me}'(\mv)=0
\end{equation}
being a generalization of the usual Neumann-condition.
\end{enumerate}
Furthermore, it is straightforward (see \cite[Ex.~1.4.4.]{BeKu}) that for all $\mv\in\mV$, these conditions can be written in terms of projections from \eqref{eq:domAQprojections} with
\begin{equation}\label{eq:KNprojections}
\begin{split}
   P_{D,\mv}&\text{ is the orthogonal projection onto the kernel of }B_{\mv},\\
    P_{N,\mv}&=\Id-P_{D,\mv},\\
    P_{R,\mv}&=0.
\end{split}
\end{equation}
\end{rem}

\subsection{Heat semigroup on quantum graphs}

For a quantum graph $\mcQ =(\mG, W,\beta, U)$ and the operator \eqref{eq:opAQ}, \eqref{eq:domopAQ}, we consider the following abstract Cauchy problem on $\mcH_{\mcQ}$:
\begin{equation}\label{eq:diffACP}
\begin{split}
\dot{z}(t)&=A_{\mcQ}z(t),\\
z(0)&=z_0.
\end{split}
\end{equation}
In the same way as in \cite[Thm.~1.4.19.]{BeKu}  one can prove the following.
\begin{prop}
The operator $\left(-A_{\mcQ},\mcD(A_{\mcQ})\right)$  defined in \eqref{eq:opAQ}--\eqref{eq:domopAQ}, is the operator associated with the form
\begin{equation}\label{eq:formAQ}
\begin{split}
\ea_{\mcQ}(z,h)&=\sum_{\me\in\mE}\int_0^{1}\left(\frac{1}{L_{\me}^2}z_{\me}'(x)\overline{h_{\me}'(x)}+W_{\me}(x)z_{\me}(x)\overline{h_{\me}(x)}\right)L_{\me}\dx\\
&+\sum_{\mv\in\mV}\langle\Lambda_{\mv}P_{R,\mv}Z(\mv),P_{R,\mv}H(\mv)\rangle ,\\
\mcD(\ea_{\mcQ})&=\left\{z\in \bigoplus_{\me\in\mE} H^1(\me)\colon P_{D,\mv}Z(\mv)=0\text{ for all }\mv\in\mV\right\},
\end{split}
\end{equation}
where $P_{D,\mv}$, $P_{R,\mv}$ and $\Lambda_{\mv}$ are defined in \eqref{eq:domAQprojections}, and $\langle \cdot,\cdot\rangle$ denotes the usual $\ell^2$-inner product of the vectors.
This means that 
\begin{equation}
\begin{split}
\mcD(A_{\mcQ})&=\left\{z\in \mcD(\ea_{\mcQ}) \colon \exists\, g\in \mcH_{\mcQ}\text{ s.t. }\ea_{\mcQ}(z,h)=\langle g,h\rangle_{\mcH_{\mcQ}}\text{ for all }h\in \mcD(\ea_{\mcQ})\right\},\\
-A_{\mcQ} z&=g.
\end{split}
\end{equation}
\end{prop}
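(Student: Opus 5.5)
We prove the statement in the two standard steps: first check that $\ea_{\mcQ}$ is a densely defined, closed, symmetric, lower semibounded form on $\mcH_{\mcQ}$, so that it has a self-adjoint associated operator; then identify that operator with $-A_{\mcQ}$ by integrating by parts edgewise. Throughout we work in the rescaled setting on $\widetilde{\mcH}_{\mcQ}$, where the weight $L_{\me}$ in the scalar product and the factor $1/L_{\me}^2$ in front of $z''$ combine. Indeed, $\frac{1}{L_{\me}^2}z'\bar h' L_{\me}=\frac{1}{L_{\me}}z'\bar h'$, which is exactly what the $\mathrm{diag}(1/L_{\me})$ factor in the vertex conditions \eqref{eq:domopAQ} produces.

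\emph{Step 1 (form properties).} By Assumptions \ref{ass:main}, $\underline{L}\le L_{\me}\le\overline{L}$ and $0\le W\le\|W\|_\infty$. Hence the edge part of $\ea_{\mcQ}$ is equivalent to the $\bigoplus H^1$-norm squared minus the $L^2$-norm squared. For the vertex term, use $|\langle\Lambda_{\mv}P_{R,\mv}Z(\mv),P_{R,\mv}Z(\mv)\rangle|\le S|Z(\mv)|^2$ together with the one-dimensional trace inequality $|z_{\me}(0)|^2\le \varepsilon\|z_{\me}'\|^2_{L^2(0,1)}+C_\varepsilon\|z_{\me}\|^2_{L^2(0,1)}$. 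Each edge endpoint belongs to exactly one vertex, so summing over $\mv$ gives
\[
\Bigl|\sum_{\mv}\langle\Lambda_{\mv}P_{R,\mv}Z(\mv),P_{R,\mv}Z(\mv)\rangle\Bigr|\le 2S\bigl(\varepsilon\|z'\|^2+C_\varepsilon\|z\|^2\bigr),
\]
with constants uniform in $\mv$. Choosing $\varepsilon$ small makes the vertex term form-bounded with bound $<1$ relative to the edge part. By the KLMN theorem, $\ea_{\mcQ}$ is then closed and bounded from below on $\mcD(\ea_{\mcQ})$. This domain is closed in $\bigoplus H^1$ because the trace maps are continuous, again uniformly by the bounded geometry. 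It is dense because it contains $\bigoplus C_c^\infty(0,1)$. Symmetry follows from the self-adjointness of $\Lambda_{\mv}$. Let $T$ denote the associated self-adjoint operator.

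\emph{Step 2 ($-A_{\mcQ}\subset T$).} Take $z\in\mcD(A_{\mcQ})\subset\widetilde{\mcH}^2_{\mcQ}$ and $h\in\mcD(\ea_{\mcQ})$. Integrating by parts on each edge gives
\[
\ea_{\mcQ}(z,h)=\langle -A_{\mcQ}z,h\rangle-\sum_{\mv}\Bigl(\bigl\langle \mathrm{diag}(1/L_{\me})Z'(\mv),H(\mv)\bigr\rangle-\langle\Lambda_{\mv}P_{R,\mv}Z(\mv),P_{R,\mv}H(\mv)\rangle\Bigr).
\]
The boundary sum is absolutely convergent because of the $H^2$-bound and the uniform trace estimate. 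To see that it vanishes, decompose $H(\mv)=P_{N,\mv}H(\mv)+P_{R,\mv}H(\mv)$, which is valid since $P_{D,\mv}H(\mv)=0$, and apply \eqref{eq:domAQprojections}. The Neumann part kills the $P_{N,\mv}$ component, and the Robin part cancels the $\Lambda_{\mv}$ term. Hence $z\in\mcD(T)$ and $Tz=-A_{\mcQ}z$.

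\emph{Step 3 (reverse inclusion), the main obstacle.} Let $z\in\mcD(T)$ with $Tz=g$. First test with $h\in C_c^\infty$ on a single edge. This gives $-\frac1{L_{\me}^2}z_{\me}''+W_{\me}z_{\me}=g_{\me}$ weakly, so $z_{\me}\in H^2(0,1)$. We also need $\sum_{\me}\|z_{\me}\|_{H^2}^2<\infty$. This follows from the equation, since $\|z''_{\me}\|\le \overline{L}^2(\|g_{\me}\|+\|W\|_\infty\|z_{\me}\|)$, and the first derivatives are controlled by interpolation on unit intervals. Next, repeat the integration by parts of Step 2 with test functions $h$ supported near a single vertex $\mv$. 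For such $h$, the trace vector $H(\mv)$ ranges over all of $\operatorname{Ran}(P_{N,\mv}+P_{R,\mv})=\ker P_{D,\mv}$. The resulting identity
\[
\bigl\langle \mathrm{diag}(1/L_{\me})Z'(\mv)-\Lambda_{\mv}P_{R,\mv}Z(\mv),\,\eta\bigr\rangle=0\quad\text{for all }\eta\in\ker P_{D,\mv}
\]
yields the Neumann and Robin parts of \eqref{eq:domAQprojections}. The Dirichlet part is already contained in $z\in\mcD(\ea_{\mcQ})$. By the equivalence of the three descriptions of the boundary conditions (\cite[Thm.~1.4.4]{BeKu}), $z\in\mcD(A_{\mcQ})$, and therefore $T=-A_{\mcQ}$.
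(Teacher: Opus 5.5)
Your proposal is correct and follows the route the paper intends. The paper gives no separate proof but refers to \cite[Thm.~1.4.19]{BeKu}, whose argument has the same three steps as yours: closedness and lower semiboundedness of $\ea_{\mcQ}$ via the uniform trace estimate (the same estimate the paper reuses in the proof of Proposition~\ref{prop:AQgen}), edgewise integration by parts, and test functions localized at single vertices to recover the Neumann and Robin parts of \eqref{eq:domAQprojections}. One point to make explicit in Step~2: you should note that $z\in\mcD(A_{\mcQ})$ already lies in $\mcD(\ea_{\mcQ})$, because the Dirichlet part $P_{D,\mv}Z(\mv)=0$ of \eqref{eq:domAQprojections} holds.
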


The properties of the above form imply nice properties of the operator $(A_{\mcQ},\mcD(A_{\mcQ}))$  and the mild solutions of the  abstract Cauchy problem \eqref{eq:diffACP} given by the orbits of the strongly continuous semigroup generated by $A_{\mcQ}$. For the terminology we refer to \cite[Sec.~II.6.]{EN00} and \cite[Sec.~3]{ABHN11}.

\begin{prop}\label{prop:AQgen}
The operator $(A_{\mcQ},\mcD(A_{\mcQ}))$ is densely defined, self-adjoint and bounded from above. 
Hence, the strongly continuous semigroup $(S_{\mcQ}(t))_{t\geq 0}$ generated by $(A_{\mcQ},\mcD(A_{\mcQ}))$ is quasicontractive and analytic.

As a consequence, the abstract Cauchy problem \eqref{eq:diffACP} is well-posed and its unique mild solutions are given by the orbits of the semigroup $S_{\mcQ}(t)z_0,$ $t\geq 0$ for each $z_0\in\mcH_{\mcQ}.$
\end{prop}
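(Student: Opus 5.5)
The plan is to work entirely with the form $\ea_{\mcQ}$ from the preceding proposition. Once we show that $\ea_{\mcQ}$ is densely defined, symmetric, closed and bounded from below, the standard form-method theorem (e.g.\ \cite[Sec.~II.6]{EN00} or the Lions/Kato representation theorem as in \cite{ABHN11}) gives the conclusions. The associated operator $-A_{\mcQ}$ is then self-adjoint and bounded from below, so $A_{\mcQ}$ is self-adjoint and bounded from above. Self-adjoint operators that are bounded above generate analytic semigroups of angle $\pi/2$, and $\|S_{\mcQ}(t)\|\le e^{\omega t}$ with $\omega$ the upper bound of $A_{\mcQ}$ (spectral theorem), which is quasicontractivity. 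Well-posedness of \eqref{eq:diffACP}, with mild solutions given by the orbits, is then the generator theorem \cite[Thm.~II.6.7]{EN00}.

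\textbf{Step 1 (density and symmetry).} $\mcD(\ea_{\mcQ})$ contains $\bigoplus_{\me}C_c^\infty(0,1)$ (finitely supported families). This set is dense in $\mcH_{\mcQ}$, since the weights $L_{\me}$ are bounded above and below by Assumption \ref{ass:main}. Symmetry follows because $W_{\me}$ is real and $\Lambda_{\mv}$ is self-adjoint.

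\textbf{Step 2 (lower bound).} The first two terms are nonnegative since $W\ge0$. The vertex term is bounded below by $-S\sum_{\mv}\|P_{R,\mv}Z(\mv)\|^2\ge -S\sum_{\mv}\|Z(\mv)\|^2$. To control it, use the one-dimensional trace inequality on $[0,1]$: for every $\varepsilon\in(0,1]$,
\[
|z(0)|^2\le \varepsilon\|z'\|^2_{L^2(0,1)}+C\varepsilon^{-1}\|z\|^2_{L^2(0,1)}.
\]
Each edge contributes two endpoint values, so summing over vertices gives
\[
\sum_{\mv}\|Z(\mv)\|^2\le 2\sum_{\me}\Bigl(\varepsilon\|z_{\me}'\|^2+C\varepsilon^{-1}\|z_{\me}\|^2\Bigr).
\]
Now insert the weights. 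We have $\frac{1}{L_{\me}^2}\|z'_{\me}\|^2 L_{\me}\ge \overline{L}^{-1}\|z_{\me}'\|^2$ and $\|z_{\me}\|^2\le \underline{L}^{-1}\|z_{\me}\|^2_{L^2(\me,w)}$. Choose $\varepsilon$ so that $2S\varepsilon\le \frac12\overline{L}^{-1}$. This yields
\[
\ea_{\mcQ}(z,z)\ge \tfrac12\sum_{\me}\overline{L}^{-1}\|z'_{\me}\|^2-\omega\|z\|^2_{\mcH_{\mcQ}}
\]
for some finite constant $\omega$.

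\textbf{Step 3 (closedness), the main obstacle.} The key point is that the form norm $\ea_{\mcQ}(z,z)+(\omega+1)\|z\|^2$ is equivalent to $\sum_{\me}\|z_{\me}\|^2_{H^1}$ on $\mcD(\ea_{\mcQ})$.
\begin{itemize}
\item The lower estimate comes from Step 2.
\item For the upper estimate, use the same trace inequality together with $\|W\|_\infty<\infty$ and $S<\infty$.
\end{itemize}
This uniformity is precisely where all of Assumption \ref{ass:main} is needed for infinite graphs. It then remains to show that $\mcD(\ea_{\mcQ})$ is closed in $\bigoplus_{\me}H^1(0,1)$. The map $z\mapsto P_{D,\mv}Z(\mv)$ is continuous for each fixed $\mv$ by the trace inequality, so the constraint set is an intersection of kernels of continuous maps, hence closed. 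Consequently $(\mcD(\ea_{\mcQ}),\text{form norm})$ is complete, i.e.\ the form is closed.

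Combining Steps 1--3 with the representation theorem, the operator associated with $\ea_{\mcQ}$ is exactly $-A_{\mcQ}$, by the previous proposition. It is self-adjoint and $\ge-\omega$, which gives all the claims.
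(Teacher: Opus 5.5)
Your proposal is correct and follows essentially the same route as the paper. Both use the form method for $\ea_{\mcQ}$ and obtain semiboundedness by absorbing the vertex term $S\sum_{\mv}\|Z(\mv)\|^2$ into the kinetic term through a one-dimensional trace inequality with a Young parameter; the paper's $a<1/(2S\,\overline{L}(\mcQ))$ plays the role of your $\varepsilon$. The only difference is that you write out density, symmetry and closedness of the form, which the paper delegates to \cite[Prop.~2.3]{KS23EJP}.
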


\begin{proof}
All the properties of $A_{\mcQ}$, except for the boundedness, can be seen in the same way as a consequence of the properties of $\ea_{\mcQ}$, see \cite[Prop.~2.3.]{KS23EJP}.  To prove that the self-adjoint operator $(A_{\mcQ},\mcD(A_{\mcQ}))$ is bounded from above, by a straightforward computation
\begin{equation}
    \begin{split}
      \langle A_{\mcQ}z,z \rangle_{\mcH_{\mcQ}}&=-\ea_{\mcQ}(z,z)=-\sum_{\me\in\mE}\int_0^{1}\left(\frac{1}{L_{\me}^2}|z_{\me}'(x)|^2+W_{\me}(x)|z_{\me}(x)|^2\right)L_{\me}\dx\\
&-\sum_{\mv\in\mV}\langle\Lambda_{\mv}P_{R,\mv}Z(\mv),P_{R,\mv}Z(\mv)\rangle 
\end{split}
\end{equation}
By Assumption \ref{ass:main}, 
observe that
\begin{equation}
      \langle A_{\mcQ}z,z \rangle_{\mcH_{\mcQ}}\leq -\sum_{\me\in\mE}\frac{1}{L_{\me}^2}\|z_{\me}'\|^2_{L^2_{\mcQ}(\me,w)}+S\sum_{\mv\in\mV}\|Z(\mv)\|^2.
      \end{equation}
    
By a parametrized version of the trace estimate behind \cite[(12)]{BolinKovacs2024} -- which follows, for $u\in H^1(0,\ell)$ and any $a>0$, from $|u(x)|^2\leq \frac{1}{\ell}\|u\|^2_{L^2(0,\ell)}+2\|u\|_{L^2(0,\ell)}\|u'\|_{L^2(0,\ell)}$ and Young's inequality, cf.~also the proof of \cite[Thm.~1.4.19]{BeKu} -- and using the isometry in \eqref{eq:Psiisometry}, we have
\begin{equation}\sum_{\mv\in\mV}\|Z(\mv)\|^2\leq 2\sum_{\me\in\mE}\left(\left(1+\frac{1}{a}\right)\frac{1}{L_ {\me}}\|z_{\me}\|^2_{L^2_{\mcQ}(\me,w)}+\frac{a}{L_{\me}}\|z_{\me}'\|^2_{L^2_{\mcQ}(\me,w)}\right)\end{equation}
with any parameter satisfying
\begin{equation}
    0<a<\frac{1}{2S\cdot \overline{L}(\mcQ)}
\end{equation}
(if $S=0$, any $a>0$ is admissible).
Thus,
\begin{equation}
    \begin{split}
      \langle A_{\mcQ}z,z \rangle_{\mcH_{\mcQ}}&\leq \left(1+\frac{1}{a}\right)\frac{2S}{\underline{L}(\mcQ)}\|z\|^2_{\mcH_{\mcQ}}.
    \end{split}
\end{equation}
\end{proof}

\section{Approximation of the heat semigroup on quantum graphs}\label{sec:approxheatsgr}

First we introduce the concept of rooted quantum graphs, following \cite[Sec.~3]{AISW2021}.

\begin{defi}
A \emph{rooted quantum graph} $\mrQ =(\mG, W,\beta, U, x_0)$ is a quantum graph $\mcQ =(\mG, W,\beta, U)$ together with a marked point $x_0\in\mG$ called the \emph{root}. We often denote $\mrQ =(\mcQ, x_0)$. Note that, by the definition of the metric graph in Section \ref{sec:heatsgronquantumgraphs}, the points of $\mG$ correspond to interior points of the edges; in particular, the root is never a vertex. 
\end{defi}

Given a rooted quantum graph $\mrQ =(\mcQ, x_0)$, we may build from it a new quantum graph, by adding at $x_0$ a new vertex $\mv_{x_0}$ with continuity and Kirchhoff's boundary conditions. More precisely, we introduce the following definition, see \cite[Def.~3.2]{AISW2021}.

\begin{defi}\label{defi:Qx0}
Let $\mrQ =(\mcQ, x_0)$ be a rooted quantum graph with $x_0\in \me_0$. We denote by $\mcQ^{x_0}$ the quantum graph such that
\[\mcQ^{x_0} \coloneqq (\mG^{x_0},W^{x_0}, \beta^{x_0}, U^{x_0} ) ,\]
where $\mG^{x_0}=(\mV^{x_0},\mE^{x_0},L^{x_0})$ with
\begin{itemize}
	\item $\mV^{x_0}=\mV\sqcup\{\mv_{x_0}\}$;
	\item $\mE^{x_0}=\left(\mE\setminus\{\me_0\}\right)\cup\left\{\{o(\me_0),\mv_{x_0}\},\{\mv_{x_0},t(\me_0)\}\right\}$, where $\me_0=\{o(\me_0),t(\me_0)\}$ is the edge in $\mG$ containing the coordinate $x_0$; 
	\item $L^{x_0}_{\me}=L_{\me}$ for $\me\in\mE\setminus\{\me_0\}$, \[L^{x_0}_{\{o(\me_0),\mv_{x_0}\}}=x_0,\quad L^{x_0}_{\{\mv_{x_0},t(\me_0)\}}=L_{\me_0}-x_0;\]
	\item $W^{x_0}_{\me}=W_{\me}$ for $\me\in\mE\setminus\{\me_0\}$, \[W^{x_0}_{\{o(\me_0),\mv_{x_0}\}}=W_{\me_0}|_{[0,x_0]},\quad W^{x_0}_{\{\mv_{x_0},t(\me_0)\}}=W_{\me_0}|_{[x_0,L_{\me_0}]};\]
    \item $(\beta^{x_0})^{\mv}=\beta^{\mv}$ for all $\mv \in\mV$, $(\beta^{x_0})^{\mv_{x_0}}(1)=(\mv_{x_0},o(\me_0))$, $(\beta^{x_0})^{\mv_{x_0}}(2)=(\mv_{x_0},t(\me_0))$;
	\item $U^{x_0}_{\mv}=U_{\mv}$ for $\mv\in\mV$, and
    \[U^{x_0}_{\mv_{x_0}}=\begin{pmatrix}
        0 & 1\\ 1 & 0
    \end{pmatrix}\]
\end{itemize}  
which, by \eqref{eq:UvconKN}, represents the Kirchhoff-condition in $\mv_{x_0}$.  We denote by $G^{x_0}=(\mV^{x_0},\mE^{x_0})$ the new combinatorial graph.
\end{defi}

For the fully precise bond-level formulation of the above definition we refer to \cite[Def.~3.2]{AISW2021}.\\

Let $\mrQ =(\mG, W,\beta, U, x_0)=(\mcQ,x_0)$ with $\mG=(\mV,\mE, L)$ be an infinite rooted quantum graph which we will now fix throughout the section. Consider the quantum graph $\mcQ^{x_0}$ associated to it and its rescaling from Definition \ref{defi:mcHQ}. 
To simplify notations, in this section we denote
\begin{equation}
    \mcQ\coloneqq \mcQ^{x_0},\quad \mcH_{\mcQ}\coloneqq \mcH,\quad L_{\mcQ}^2(\me,w)\coloneqq L^2(\me),\quad H_{\mcQ}^2(\me,w)\coloneqq H^2(\me).
\end{equation}

Notice that $\mcQ$ has a marked vertex $\mv_{x_0}$ which can be considered as the root of the combinatorial graph $\mG^{x_0}$. Let $\left(A_{\mcQ},\mcD(A_{\mcQ})\right)$ be the associated operator to $\mcQ$ from \eqref{eq:opAQ}-\eqref{eq:domopAQ}.  Notice, that defining the operator in this way, we always assume the edge coordinates to vary in $[0,1]$ and the parameters $L_{\me}$ be contained in the operator.

\begin{defi}\label{defi:rballs}
For every $r\in\nat$, we define the $r$-ball around $\mv_{x_0}$ as 
\begin{equation}\label{eq:Bx0r}
    \mB(x_0, r) \coloneqq \{\mw\in \mV^{x_0}:\rho_{G^{x_0}}(\mv_{x_0}, \mw) \leq r\}.
\end{equation} 
(To simplify notations, we write $x_0$ instead of $\mv_{x_0}$.) Here $\rho_{G^{x_0}}(\mv, \mw)$ is the length of the shortest path connecting the vertices $\mv$ and $\mw$ in the discrete graph $G^{x_0}$. Denote by $\mE(x_0, r)$ the set of edges connecting two vertices from $\mB(x_0, r)$ in $G^{x_0}$, and let
\begin{equation}\label{eq:rballmetricgraph}
\mG(x_0,r)
\end{equation}
be the metric subgraph of $\mG^{x_0}$ induced by the vertices in $\mB(x_0,r)$ and the edges in $\mE(x_0,r)$.
By Assumption \ref{ass:main}, $\mG(x_0,r)$ is finite.
\end{defi}

For each $r\in\nat$, we are going to define a new quantum graph $\mcQ_r$  as follows. 
\begin{defi}
Let $r\in\nat$ arbitrary. Define
\begin{equation}
W_{r,\me}\coloneqq W_{\me}, \text{ if }\me\in\mE(x_0,r),
\end{equation}
a potential on $\mG(x_0,r)$, 
\begin{equation}\label{eq:betavr}
\beta_{r}^{\mv}\coloneqq \begin{cases}
\beta^{\mv}, &\text{ if } \mv\in \mB(x_0,r-1),\\
\text{any labeling of the edge set }\mE(x_0,r)\cap\mE_{\mv},&\text{ if } \mv\in \mB({x_0},r)\setminus \mB({x_0},r-1),
\end{cases}
\end{equation}
and
\begin{equation}
U_{r,\mv}\coloneqq \begin{cases}
U_{\mv}, &\text{ if } \mv\in \mB(x_0,r-1),\\
KN_{r,\mv},&\text{ if } \mv\in \mB({x_0},r)\setminus \mB({x_0},r-1),
\end{cases}
\end{equation}
where $KN_{r,\mv}$ is the unitary matrix of size $|\mE(x_0,r)\cap\mE_{\mv}|$ representing the continuity and Kirchhoff-conditions in a vertex $\mv\in \mB({x_0},r)\setminus \mB({x_0},r-1)$, for the edges $\mE(x_0,r)\cap\mE_{\mv}$, cf.~\eqref{eq:UvconKN}.
Thus, $U_{r,\mv}$ is a unitary matrix for each $\mv\in \mB(x_0,r)$.

We remark that since we require continuity and Kirchhoff--Neumann condition in the boundary vertices of the $r$-ball $\mG(x_0,r)$,  we could choose any labeling of the edges incident to such vertices in \eqref{eq:betavr}, cf.~\cite{AISW2021}, Remark before Section 3.\\

Define the quantum graph
\begin{equation}\label{eq:Qrdef}
\mcQ_r\coloneqq \left(\mG(x_0,r),W_r,\beta_r,U_r\right)
\end{equation}
with
\begin{equation}
    W_r=\left(W_{r,\me}\right)_{\me\in\mE(x_0,r)},\quad\beta_r= \left(\beta_{r}^{\mv}\right)_{\mv\in \mB(x_0,r)},\quad U_r=\left(U_{r,\mv}\right)_{\mv\in \mB(x_0,r)}.
\end{equation}
\end{defi}

We will associate a Hamiltonian operator to $\mcQ_r$ using $A_{\mcQ}$. Define the spaces
\begin{equation}\label{eq:mcHr}
\begin{split}
\mcH_r&\coloneqq \bigoplus_{\me\in\mE(x_0,r)}L^2(\me)\\ 
\|f\|^2_{\mcH_r}&\coloneqq\sum_{\me\in\mE(x_0,r)}\|f_{\me}\|^2_{L^2(\me)},\quad f\in\mcH_r
\end{split}
\end{equation}
and
\begin{equation}\label{eq:mcH2r}
\begin{split}
\mcH^2_r&\coloneqq \bigoplus_{\me\in\mE(x_0,r)}H^2(\me)\\ 
\|f\|^2_{\mcH^2_r}&\coloneqq\sum_{\me\in\mE(x_0,r)}\|f_{\me}\|^2_{H^2(\me)}, \quad f\in\mcH^2_r.
\end{split}
\end{equation}
Denote by
\begin{equation}\label{eq:A1rvA2rv}
A_{r,\mv}=\mi \left(U_{r,\mv}-\Id\right),\;  B_{r,\mv}=U_{r,\mv}+\Id,
\end{equation}
cf.~\eqref{eq:A1vA2v}. For $\mv\in\mB(x_0,r)$ we abbreviate $\mE^r_{\mv}\coloneqq\mE(x_0,r)\cap\mE_{\mv}$, and define
\begin{equation}
\begin{split}
(A_{\mcQ_r}z)_{\me}&=\frac{1}{L_{\me}^2}z_{\me}''-W_{\me}z_{\me},\text{ for } \me\in\mE({x_0},r),\\
\mcD(A_{\mcQ_r})&=\left\{z\in\mcH^2_r:\text{ for all }\mv\in \mB(x_0,r),\,A_{r,\mv}Z(\mv)+B_{r,\mv}\cdot \mathrm{diag}\left(\frac{1}{L_{\me}}\right)_{\me\in\mE^r_{\mv}} \cdot Z'(\mv)=0\right\},\label{eq:bdrycondBr}
\end{split}
\end{equation}
see \eqref{eq:opAQ}-\eqref{eq:domopAQ}.
Notice that the boundary conditions coincide with the original ones from $\mcD(A_\mcQ)$ in the vertices of $ \mB(x_0,r-1)$, while they are the continuity and weighted Kirchhoff--Neumann conditions in the vertices of $ \mB(x_0,r)\setminus \mB(x_0,r-1)$.\\

The next result can be proved in the same way as Proposition \ref{prop:AQgen}.

\begin{prop}\label{prop:AQrgen}
Let $r\in\nat$ be arbitrary. The operator $\left(A_{\mcQ_r},\mcD(A_{\mcQ_r})\right)$ on $\mcH_r$ 
is the operator associated with the form
\begin{equation}\label{eq:formAQr}
\begin{split}
\ea_{\mcQ_r}(z,h)&=\sum_{\me\in\mE(x_0,r)}\int_0^{1}\left(\frac{1}{L_{\me}^2}z_{\me}'(x)\overline{h_{\me}'(x)}+W_{\me}(x)z_{\me}(x)\overline{h_{\me}(x)}\right)L_{\me}\dx\\
&+\sum_{\mv\in \mB(x_0,r-1)}\langle\Lambda_{\mv}P_{R,\mv}Z(\mv),P_{R,\mv}H(\mv)\rangle ,\\
\mcD(\ea_{\mcQ_r})&=\left\{z\in \bigoplus_{\me\in \mE(x_0,r)} H^1(\me)\colon P_{D,\mv}Z(\mv)=0\text{ for all }\mv\in\mB(x_0,r)\right\}.
\end{split}
\end{equation}
Hence, $\left(A_{\mcQ_r},\mcD(A_{\mcQ_r})\right)$ is densely defined, self-adjoint and bounded from above by the same constant as $\left(A_{\mcQ},\mcD(A_{\mcQ})\right)$. Furthermore, it generates a strongly continuous semigroup $(S_{\mcQ_r}(t))_{t\geq 0}$ on $\mcH_r$ which is quasicontractive and analytic, and its norms satisfy the same exponential bound as those of $(S_{\mcQ}(t))_{t\geq 0}$.

As a consequence, the abstract Cauchy problem 
\begin{equation}
\begin{split}
\dot{z}(t)&=A_{\mcQ_r}z(t),\\
z(0)&=z_0.
\end{split}
\end{equation}
is well-posed on $\mcH_r$ and its unique mild solutions are given by the orbits of the semigroup $S_{\mcQ_r}(t)z_0,$ $t\geq 0$ for each $z_0\in\mcH_r.$
\end{prop}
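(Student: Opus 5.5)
The strategy is to repeat the argument for Proposition \ref{prop:AQgen} on the finite quantum graph $\mcQ_r$ and then check that the constants do not change. First I verify that $\mcQ_r$ is a quantum graph in the sense of Definition \ref{defi:quantumgraph}. The edge set $\mE(x_0,r)$ is finite, and $W_r$ is the restriction of $W$. Every vertex $\mv\in\mB(x_0,r)$ carries a unitary matrix $U_{r,\mv}$. For $\mv\in\mB(x_0,r-1)$ all incident edges of $G^{x_0}$ lie in $\mE(x_0,r)$, because their endpoints are at distance at most $r$; hence $U_{r,\mv}=U_{\mv}$ has the correct size. For boundary vertices, $KN_{r,\mv}$ is given by \eqref{eq:UvconKN} with $d(\mv)$ replaced by $|\mE^r_{\mv}|\ge 1$. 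Then \cite[Thm.~1.4.4]{BeKu} yields the projection description \eqref{eq:domAQprojections} at every vertex. At the boundary vertices this is \eqref{eq:KNprojections} with $P_{R,\mv}=0$, so those vertices contribute no Robin term. This explains why the vertex sum in \eqref{eq:formAQr} runs only over $\mB(x_0,r-1)$.

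Next, as in \cite[Thm.~1.4.19]{BeKu}, I check that $\ea_{\mcQ_r}$ is densely defined, symmetric, bounded from below and closed on $\mcH_r$. The argument integrates by parts edgewise for $z\in\mcD(A_{\mcQ_r})$ and $h\in\mcD(\ea_{\mcQ_r})$. The boundary terms are $\sum_{\mv}\langle \mathrm{diag}(1/L_{\me})Z'(\mv),H(\mv)\rangle$, where the weights $L_{\me}$ come from the rescaling. Splitting each vector with $P_{D,\mv}$, $P_{N,\mv}$ and $P_{R,\mv}$ shows that these terms reduce to the Robin terms $\langle\Lambda_{\mv}P_{R,\mv}Z(\mv),P_{R,\mv}H(\mv)\rangle$. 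Then the converse inclusion is needed: if $\ea_{\mcQ_r}(z,h)=\langle g,h\rangle$ for all $h$ in the form domain, then $z\in\mcH^2_r$ and $z$ satisfies the vertex conditions. For this I test first with $h$ compactly supported inside edges and then with $h$ localized near a single vertex. This step is the same as for $\mcQ$, and easier here because the graph is finite. Closedness and the lower bound follow from the trace estimate below. Self-adjointness, analyticity and generation of a strongly continuous semigroup then follow from \cite[Prop.~2.3]{KS23EJP} together with standard form theory.

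The main point is the claim that the constants are the same as for $\mcQ$. I rerun the computation in the proof of Proposition \ref{prop:AQgen} with the quantities of $\mcQ_r$. For this graph, $\underline{L}(\mcQ_r)\ge\underline{L}(\mcQ)$ and $\overline{L}(\mcQ_r)\le\overline{L}(\mcQ)$, since $\mcQ_r$ uses a subset of the edges. The bound $\|W_r\|_\infty\le\|W\|_\infty$ holds for the same reason. The Robin vertices of $\mcQ_r$ are among those of $\mcQ$ with identical $\Lambda_{\mv}$, so $\sup\|\Lambda_{\mv}\|\le S$. The admissible range $0<a<1/(2S\overline{L}(\mcQ))$ is therefore still admissible for $\mcQ_r$. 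The edgewise trace inequality is purely local, so summing it over $\mE(x_0,r)$ gives the same estimate. Hence
\[
\langle A_{\mcQ_r}z,z\rangle_{\mcH_r}\le\Bigl(1+\frac1a\Bigr)\frac{2S}{\underline{L}(\mcQ)}\|z\|^2_{\mcH_r}=:\omega\|z\|_{\mcH_r}^2 .
\]
Since $A_{\mcQ_r}$ is self-adjoint with $A_{\mcQ_r}-\omega\le 0$, the spectral theorem gives $\|S_{\mcQ_r}(t)\|\le e^{\omega t}$, which is the same bound as for $S_{\mcQ}(t)$.

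Well-posedness of the Cauchy problem and the description of mild solutions as semigroup orbits are then the standard consequences for generators, see \cite[Sec.~II.6]{EN00}.
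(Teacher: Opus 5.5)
Your proposal is correct and takes the same route as the paper, which says only that the result is proved in the same way as Proposition \ref{prop:AQgen}. You carry out that argument on the finite graph $\mcQ_r$ and add the two checks the paper leaves implicit: the Kirchhoff boundary vertices have $P_{R,\mv}=0$, and $\mcQ_r$ uses only a subset of the edges and Robin vertices of $\mcQ$. Hence the same admissible $a$ and the same constant $\left(1+\frac{1}{a}\right)\frac{2S}{\underline{L}(\mcQ)}$ work for $\mcQ_r$.
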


Using a Trotter--Kato type approximation result of Ito and Kappel \cite{ItoKappel98}, we are going to show that the semigroups $(S_{\mcQ_r}(t))_{t\geq 0}$ converge strongly to $(S_{\mcQ}(t))_{t\geq 0}$, if $r\to+\infty$. For this purpose we introduce the following operators.

\begin{defi}\label{defi:PrJr}
Let $r\in\nat$ be arbitrarily fixed and define the linear operators
\begin{enumerate}
	\item $P_r\colon \mcH\to\mcH_r$, $P_rf\coloneqq \left(f_{\me}\right)_{\me\in\mE(x_0,r)}$, $f\in\mcH$;
	\item $J_r\colon \mcH_r\to\mcH$, 
	\[(J_rg)_{\me}\coloneqq \begin{cases} g_{\me}, & \text{ if }{\me\in\mE(x_0,r)},\\
	0, &\text{ if }\me\in\mE\setminus \mE(x_0,r),\end{cases}\] 
	$g\in \mcH_r.$
\end{enumerate}
\end{defi}
The operators $P_r$ and $J_r$, $r\in\nat$ satisfy the next properties, cf.~\cite[(A1)--(A3)]{ItoKappel98}.
\begin{prop}\label{prop:PrJr}
Let $P_r$ and $J_r$, $r\in\nat$ be the operators from Definition \ref{defi:PrJr}. Then
\begin{enumerate}
	\item For each $r\in\nat$, $P_r$ and $J_r$ are bounded linear operators with $\|P_r\|\leq 1$, $\|J_r\|=1.$
	\item For each $f\in\mcH$, $\|J_rP_rf-f\|_{\mcH}\to 0$, $r\to+\infty$.
	\item For each $r\in\nat$, $P_rJ_r=\Id_{\mcH_r}$.
\end{enumerate}
\end{prop}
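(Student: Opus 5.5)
The three properties follow from the fact that $P_r$ and $J_r$ are, respectively, the coordinate restriction to and the zero extension from the edge set $\mE(x_0,r)$. The only point needing an argument is that the balls exhaust the graph. I would treat the items in the order (1), (3), (2).

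For (1), linearity of $P_r$ and $J_r$ is immediate. For $f\in\mcH$,
\[
\|P_rf\|^2_{\mcH_r}=\sum_{\me\in\mE(x_0,r)}\|f_{\me}\|^2_{L^2(\me)}\leq\sum_{\me\in\mE}\|f_{\me}\|^2_{L^2(\me)}=\|f\|^2_{\mcH},
\]
since the weighted norms on $\mcH_r$ and on $\mcH$ use the same weights $L_{\me}$ edgewise. Hence $\|P_r\|\leq 1$. For $g\in\mcH_r$ the extension adds only zero components, so $\|J_rg\|_{\mcH}=\|g\|_{\mcH_r}$. Thus $J_r$ is an isometry, and $\|J_r\|=1$ as soon as $\mcH_r\neq\{0\}$. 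This holds because $\mE(x_0,r)$ contains the two half-edges at $\mv_{x_0}$ for every $r\geq 1$; for $r=0$ one would have to note this degenerate case separately or take $r\geq1$.

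For (3), take $g\in\mcH_r$. Then $J_rg$ agrees with $g$ on $\mE(x_0,r)$, and restricting back gives $P_rJ_rg=g$.

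For (2), note that $J_rP_rf-f$ is $-f$ on $\mE\setminus\mE(x_0,r)$ and zero elsewhere, so
\[
\|J_rP_rf-f\|^2_{\mcH}=\sum_{\me\in\mE\setminus\mE(x_0,r)}\|f_{\me}\|^2_{L^2(\me)}.
\]
The key step is that the sets $\mE(x_0,r)$ increase in $r$ and their union is $\mE$ (of $\mcQ=\mcQ^{x_0}$). Monotonicity holds because $\mB(x_0,r)\subset\mB(x_0,r+1)$. For exhaustion, take any edge $\me=\{\mv,\mw\}$. Since $G^{x_0}$ is connected, both $\rho_{G^{x_0}}(\mv_{x_0},\mv)$ and $\rho_{G^{x_0}}(\mv_{x_0},\mw)$ are finite, so $\me\in\mE(x_0,r)$ for every $r\geq\max$ of these two distances. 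By Assumption \ref{ass:main}, each $\mE(x_0,r)$ is finite, and $\mE$ is countable. The quantity above is therefore the tail of the convergent series $\sum_{\me\in\mE}\|f_{\me}\|^2<\infty$ taken over the complements of an increasing exhausting sequence of finite sets. By dominated convergence for series (or directly: choose a finite $F\subset\mE$ carrying all but $\varepsilon$ of the sum, then take $r$ so large that $F\subset\mE(x_0,r)$), it tends to $0$ as $r\to+\infty$.

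The only mildly delicate point is this exhaustion argument, in particular that connectedness survives the insertion of $\mv_{x_0}$. It does: the new vertex is joined to both endpoints of $\me_0$, so $G^{x_0}$ is connected whenever $G$ is.
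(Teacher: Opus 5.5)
Your proposal is correct and follows the paper's proof. Items (1) and (3) are read off from the definitions, and (2) reduces to showing that the tail $\sum_{\me\in\mE\setminus\mE(x_0,r)}\|f_{\me}\|^2_{L^2(\me)}$ vanishes, which holds because connectedness makes the sets $\mE(x_0,r)$ exhaust $\mE$. Your extra remarks are also correct details the paper leaves implicit: $\mcH_0=\{0\}$, so $\|J_r\|=1$ requires $r\geq 1$, and $G^{x_0}$ inherits connectedness from $G$.
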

\begin{proof}
(1) and (3) are direct consequences of Definition \ref{defi:PrJr}. To verify (2), let $f\in\mcH$ be arbitrary. Then
\begin{equation}
   \|J_rP_rf-f\|_{\mcH}^2=\sum_{\me\in \mE\setminus \mE(x_0,r)}\|f_{\me}\|^2_{L^2(\me)}.
\end{equation}
We know that
\begin{equation}  \|f\|_{\mcH}^2=\sum_{\me\in \mE}\|f_{\me}\|^2_{L^2(\me)}<\infty.
\end{equation}
Furthermore, the connectedness of $\mG$ implies that if $\me\in\mE$ is arbitrary, for sufficiently large $r>0$, $\me\in \mE(x_0,r)$ holds. From these facts we have
\begin{equation}
   \|J_rP_rf-f\|_{\mcH}^2=\sum_{\me\in \mE\setminus \mE(x_0,r)}\|f_{\me}\|^2_{L^2(\me)}\to 0,\quad r\to+\infty.
\end{equation}
\end{proof}
Thus, we are ready to state the result about the strong convergence of the semigroups on the $r$-balls to the semigroup on the quantum graph.
\begin{theo}\label{thm:heatsgrconv} 
For the semigroups $(S_{\mcQ}(t))_{t\geq 0}$ and $(S_{\mcQ_r}(t))_{t\geq 0}$, generated by $(A_{\mcQ},\mcD(A_{\mcQ}))$ and $(A_{\mcQ_r},\mcD(A_{\mcQ_r}))$, $r\in\nat$, the following convergence holds true: for any $f\in\mcH$ and $t\geq 0$,
\begin{equation}
\|J_rS_{\mcQ_r}(t)P_rf-S_{\mcQ}(t)f\|_{\mcH}\to 0,\text{ as }r\to+\infty,
\end{equation}
uniformly in $t$ on compact intervals.
\end{theo}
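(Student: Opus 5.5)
We will apply the Trotter--Kato type theorem of Ito and Kappel \cite{ItoKappel98} in its resolvent form. It requires three ingredients. First, the approximating spaces must satisfy (A1)--(A3), which is Proposition~\ref{prop:PrJr}. Second, the semigroups must obey a uniform stability bound $\|S_{\mcQ_r}(t)\|\le Me^{\omega t}$ with $M,\omega$ independent of $r$. By Proposition~\ref{prop:AQrgen} the operators $A_{\mcQ_r}$ are self-adjoint and bounded above by the same constant $\omega=\left(1+\frac1a\right)\frac{2S}{\underline{L}(\mcQ)}$ as $A_{\mcQ}$, so we may take $M=1$. Third, we need a consistency condition: for some $\lambda>\omega$ and all $f\in\mcH$,
\begin{equation}
J_rR(\lambda,A_{\mcQ_r})P_rf\to R(\lambda,A_{\mcQ})f,\quad r\to\infty.
\end{equation}
Once this is shown, \cite{ItoKappel98} gives the claimed strong convergence, uniform for $t$ in compact intervals.

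For the resolvent convergence we use the forms $\ea_{\mcQ}$ and $\ea_{\mcQ_r}$. Put $u=R(\lambda,A_{\mcQ})f$ and $u_r=R(\lambda,A_{\mcQ_r})P_rf$. Then
\begin{equation}
\lambda\langle u_r,h\rangle+\ea_{\mcQ_r}(u_r,h)=\langle P_rf,h\rangle\quad\text{for all } h\in\mcD(\ea_{\mcQ_r}).
\end{equation}
The trace estimate in the proof of Proposition~\ref{prop:AQgen}, with $\lambda$ chosen large enough, yields the uniform coercivity
\begin{equation}
\lambda\|h\|^2+\operatorname{Re}\ea_{\mcQ_r}(h,h)\ge c\|h\|^2_{H^1},
\end{equation}
with $c>0$ independent of $r$.

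Testing the difference against a cut-off comparison function then gives the convergence. Let $\chi_r$ be a piecewise linear cut-off on $\mG^{x_0}$ that equals $1$ on $\mB(x_0,r-2)$, vanishes outside $\mB(x_0,r-1)$, and is linear on the edges in between, so that $|\chi_r'|\le 1/\underline{L}$ in unscaled variables.

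The key observation is that $\chi_ru$ lies in $\mcD(\ea_{\mcQ_r})$. Inside $\mB(x_0,r-1)$ the original vertex conditions hold, and multiplying by $\chi_r$ preserves the Dirichlet part $P_{D,\mv}Z(\mv)=0$ on $\mB(x_0,r-1)$, because $\chi_r$ is continuous and takes a single value at each vertex. At vertices in the outer layer $\chi_r u$ vanishes identically, so the Kirchhoff form domain (continuity) is satisfied there. Moreover, the Robin terms for $\chi_r u$ in the two forms coincide.

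Next set $w_r=u_r-P_r(\chi_ru)$. By coercivity,
\begin{equation}
c\|w_r\|_{H^1}^2\le \left|\langle P_rf,w_r\rangle-\lambda\langle \chi_ru,w_r\rangle-\ea_{\mcQ_r}(\chi_ru,w_r)\right|.
\end{equation}
We extend $w_r$ by zero to get $J_rw_r$. This lies in $\mcD(\ea_{\mcQ})$ only if $w_r$ vanishes at the outer-boundary vertices, which is not guaranteed. To avoid the issue, test instead with $\chi_{r}w_r$ and use the equation for $u$. The remaining error terms are then of the form
\begin{equation}
\int |u'||\chi_r'||w_r| \quad\text{and}\quad \|(1-\chi_r)u\|_{H^1}\,\|w_r\|_{H^1},
\end{equation}
together with terms involving $(1-\chi_r)w_r$, where $w_r$ itself satisfies the $\mcQ_r$-equation.

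A cleaner alternative is to bound $\|u_r\|_{H^1}\le C\|f\|$ uniformly in $r$ and argue by weak compactness. Extract a subsequence of $J_ru_r$ converging weakly in $H^1_{\mathrm{loc}}$ and in $L^2$, and identify the limit as $u$: test with compactly supported $h\in\mcD(\ea_{\mcQ})$, which lie in $\mcD(\ea_{\mcQ_r})$ for large $r$ with equal form values, and use density of such $h$ together with uniqueness. This gives weak convergence $J_ru_r\rightharpoonup u$. Norm convergence then follows from
\begin{equation}
\lambda\|u_r\|^2+\ea_{\mcQ_r}(u_r,u_r)=\langle P_rf,u_r\rangle\to\langle f,u\rangle=\lambda\|u\|^2+\ea_{\mcQ}(u,u),
\end{equation}
combined with weak lower semicontinuity of the coercive form.

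The main obstacle is this identification step on an infinite graph. It needs two things: the compactly supported elements must be dense in $\mcD(\ea_{\mcQ})$ in the form norm, which again uses the cut-offs $\chi_r$ and the uniform bound $|\chi_r'|\le 1/\underline{L}$; and the lower semicontinuity argument must handle the Robin vertex terms uniformly, using $S<\infty$ and the trace estimate.
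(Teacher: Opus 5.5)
Your route is genuinely different from the paper's. The paper works at the generator level. It verifies the consistency conditions (C1)--(C2) of Ito--Kappel's Prop.~3.1 by building, for each $z\in\mcD(A_{\mcQ})$, an explicit $z_r\in\mcD(A_{\mcQ_r})$: $z$ is kept on $\mE(x_0,r-1)$, and on the outer edges a cubic Hermite interpolation brings value and slope to $0$, so that $\|z-J_rz_r\|_{\mcH^2}$ is small. You instead prove resolvent convergence through the forms, using uniform coercivity, a uniform $H^1$ bound, weak compactness, and identification of the limit with finitely supported test functions.

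Your approach only needs $H^1$ cut-offs, with no derivative or Kirchhoff matching at the truncation boundary. It is also insensitive to which form-compatible conditions are imposed there. The price is reliance on the self-adjoint form structure and on compactness. The paper's explicit $H^2$ construction is elementary, and the same construction is reused later for the Benjamini--Schramm continuity.

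Your stability, coercivity and identification steps are sound. In the identification, also record that the weak limit $v$ satisfies $P_{D,\mv}V(\mv)=0$ at every vertex: traces are weakly continuous, and each vertex lies in $\mB(x_0,r-1)$ for large $r$. Hence $v\in\mcD(\ea_{\mcQ})$ and uniqueness applies.

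The step that is not justified is the upgrade from weak to norm convergence. Set $q_r(w)=\lambda\|w\|^2+\ea_{\mcQ_r}(w,w)$ and $q(w)=\lambda\|w\|^2+\ea_{\mcQ}(w,w)$. The identity $q_r(u_r)=\langle f,J_ru_r\rangle\to\langle f,u\rangle=q(u)$ is correct. However, weak lower semicontinuity needs a \emph{fixed} norm on $\bigoplus_{\me}H^1(\me)$ whose values at $J_ru_r$ converge to its value at $u$.

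If $P_{R,\mv}=0$ everywhere (e.g.\ Kirchhoff), this works, since $q_r(u_r)$ is then exactly a fixed energy evaluated at $J_ru_r$. In general it fails: $q_r(u_r)$ differs from the full form at $J_ru_r$ by the Robin terms at the vertices at distance $r$. Moreover, passing to the limit in the Robin sum over $\mB(x_0,r-1)$ on an infinite graph requires that $u_r$ not concentrate near the truncation boundary. That is a tightness estimate you do not supply.

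A clean fix uses your own cut-off observation. Fix $R$ and take $r$ large. Then $P_r(\chi_Ru)\in\mcD(\ea_{\mcQ_r})$, and all its form values agree with those on $\mcQ$. Expanding $q_r$ and using the $\mcQ_r$-equation tested with $u_r$ and with $P_r(\chi_Ru)$ gives
\begin{equation*}
c\,\|u_r-P_r(\chi_Ru)\|_{H^1}^2\le q_r\big(u_r-P_r(\chi_Ru)\big)=\langle f,J_ru_r\rangle-2\operatorname{Re}\langle f,\chi_Ru\rangle+q(\chi_Ru).
\end{equation*}
First let $r\to\infty$, using the weak convergence. Then let $R\to\infty$, using $\chi_Ru\to u$ in $H^1$, continuity of $q$, and $q(u)=\langle f,u\rangle$. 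The right-hand side then tends to $0$. Since $J_r\big(u_r-P_r(\chi_Ru)\big)=J_ru_r-\chi_Ru$, this yields $J_ru_r\to u$ in $\mcH$, and the rest of your argument goes through.
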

\begin{proof}
We are going to use \cite[Prop.~3.1]{ItoKappel98}. By Propositions \ref{prop:AQrgen} and \ref{prop:PrJr},  it is enough to prove the following:

\textbf{(C1)} There exists a subset $D\subset\mcD(A_{\mcQ})$ which is dense in $\mcH$ and $\overline{(\la\cdot \Id-A_{\mcQ})D}=\mcH$ for $\la>0.$

\textbf{(C2)} For all $z\in D$, there exists a sequence $(z_r)_{r\in\nat}$ with $z_r\in\mcD(A_{\mcQ_r})$, $r\in\nat$ such that
\begin{equation}\label{eq:JrArlimit}
\lim_{r\to\infty}J_rz_r=z,\quad \lim_{r\to\infty}J_rA_{\mcQ_r} z_r=A_{\mcQ}z.
\end{equation} 

By \cite[Prop.~3.1]{ItoKappel98}, these facts imply the claim.\\

We first prove \textbf{(C1)}. Let $D\coloneqq \mcD(A_{\mcQ})$. The Feller-Miyadera-Phillips Theorem \cite[Thm.~II.3.8]{EN00} implies that $D$ is dense in $\mcH$ and for $\la$ big enough, $\la\in\rho(A_{\mcQ})$ holds, hence $\overline{(\la\cdot\Id-A_{\mcQ})D}=\mcH$ is satisfied.\\

To prove \textbf{(C2)}, let $z\in \mcD(A_{\mcQ})$ and $\ve>0$ be arbitrary fixed. We are going to construct  functions $z_r\in \mcD(A_{\mcQ_r})$, $r\in\nat$ such that  for some constant $L>0$,
\begin{equation}\label{eq:C2estimate}
    \|z-J_rz_r\|^2_{\mcH}+\|A_{\mcQ}z-J_rA_{\mcQ_r}z_r\|^2_{\mcH}<L\cdot \ve
\end{equation}
holds, if $r$ is big enough.\\

Since $z\in\mcH^2$, there exists $R>0$ such that if $r>R$, then
\begin{equation}\label{eq:znormrest}
    \sum_{\me\in\mE\setminus \mE(x_0,r-1)}\|z_{\me}\|^2_{H^2(\me)}<\ve.
\end{equation}
We are going to define $z_r\in \mcD(A_{\mcQ_r})$ for $r>R$. Let
\begin{equation}\label{eq:zreqzonErminus1}
    (z_r)_{\me}=z_{\me},\quad \me\in \mE(x_0,r-1).
\end{equation}

If $\me\in \mE(x_0,r)\setminus \mE(x_0,r-1)$ holds, and both endpoints of $\me$ are contained in $\mB(x_0,r)$, we define $(z_r)_{\me}\equiv 0$. Otherwise, assume that $o(\me)\in \mB(x_0,r-1)$ and define
\begin{equation}\label{eq:zrdef}
    (z_r)_{\me}(x)\coloneqq \begin{cases}
        z_{\me}(x), &\text{ if }x\in[0,\frac{1}{2}],\\
        p_{\me}(x), &\text{ if }x\in[\frac{1}{2},1],
    \end{cases}
\end{equation}
where $p_{\me}(x)$ is the third-degree Hermite interpolation polynomial with the data points
\begin{equation}\label{eq:Hermitedata}
\begin{split}
   p_{\me}\left(\frac{1}{2}\right)&=z_{\me}\left(\frac{1}{2}\right), \quad p_{\me}'\left(\frac{1}{2}\right)=z'_{\me}\left(\frac{1}{2}\right),\\
   p_{\me}(1)&=0,\quad 
   p'_{\me}(1)=0,
\end{split}
\end{equation}
that is
\begin{equation}\label{eq:Hermiteinterpolpolynom}
    p_{\me}(x)=\alpha(x)\cdot z_{\me}\left(\frac{1}{2}\right)+\beta(x)\cdot z'_{\me}\left(\frac{1}{2}\right)
\end{equation}
with given basis polynomials $\alpha,\beta$ of third degree.

For $\me\in \mE(x_0,r)\setminus \mE(x_0,r-1)$ observe that by \eqref{eq:zrdef} $(z_r)_{\me}'$ exists almost everywhere and by \eqref{eq:Hermitedata},  $(z_r)_{\me}'\in H^1(0,1)$ holds. Thus, by \eqref{eq:zreqzonErminus1}, $(z_r)_{\me}\in H^2(\me)$ is true  for $\me\in \mE(x_0,r)$. Definitions \eqref{eq:zreqzonErminus1} and \eqref{eq:zrdef} assure that in the vertices of $\mB(x_0,r-1)$ the conditions in the domain of $A_{\mcQ_r}$ are satisfied. In the vertices $\mv$ of $\mB(x_0,r)\setminus \mB(x_0,r-1)$ the boundary conditions \eqref{eq:bdrycondBr} in $\mcD(A_{\mcQ_r})$ are trivially satisfied since either $(z_r)_{\me}\equiv 0$ or 
\[(z_r)_{\me}(1)=p_{\me}(1)=0,\quad  (z_r)_{\me}'(1)=p'_{\me}(1)=0\] for all edges $\me$ incident to $\mv=t(\me)$. Thus, $z_r\in \mcD(A_{\mcQ_r})$ holds.\\

Using Sobolev embedding, we have that for each $\me\in\mE$, $z_{\me}\in C^1[0,1]$ holds, furthermore, by \eqref{eq:znormrest}, there exists $k>0$ (independent of $r$) such that
\begin{equation}
    \sum_{\me\in\mE(x_0,r)\setminus \mE(x_0,r-1)}\left(\|z_{\me}\|^2_{\infty}+\|z'_{\me}\|^2_{\infty}\right)<k\cdot\sum_{\me\in\mE(x_0,r)\setminus \mE(x_0,r-1)}\|z_{\me}\|^2_{H^2(\me)}<k\cdot \ve.
\end{equation} 
Using this, \eqref{eq:znormrest} and \eqref{eq:Hermiteinterpolpolynom}, we obtain that there exists $C>0$ (depending only on $\alpha$ and $\beta$) such that
\begin{equation}\label{eq:zrnormonErminusErminus1}
\begin{split}
   \sum_{\me\in\mE(x_0,r)\setminus \mE(x_0,r-1)}\|(z_r)_{\me}\|^2_{H^2(\me)}&\leq 2\cdot\sum_{\me\in\mE(x_0,r)\setminus \mE(x_0,r-1)} \left(\|z_{\me}\|^2_{H^2(\me)}+ \|p_{\me}\|^2_{H^2(\me)}\right)\\&<2\cdot(\ve+k\cdot\ve\cdot C)=\ve\cdot 2(1+k\cdot C)\eqqcolon c\cdot \ve.
   \end{split}
\end{equation}

By the definition of $A_{\mcQ}$ \eqref{eq:opAQ} and $A_{\mcQ_r}$ \eqref{eq:bdrycondBr}, and by Assumption \ref{ass:main}, there exists $K>0$ (independent of $r$) such that 
\begin{equation}
        \|z-J_rz_r\|^2_{\mcH}+\|A_{\mcQ}z-J_rA_{\mcQ_r}z_r\|^2_{\mcH}\leq K\cdot \|z-J_rz_r\|^2_{\mcH^2}.
\end{equation}
Notice that this estimate and \eqref{eq:znormrest}, \eqref{eq:zreqzonErminus1}, \eqref{eq:zrnormonErminusErminus1} imply
\begin{equation}
\begin{split}
        &\|z-J_rz_r\|^2_{\mcH}+\|A_{\mcQ}z-J_rA_{\mcQ_r}z_r\|^2_{\mcH} \\
        &\leq K\cdot \left(\sum_{\me\in \mE(x_0,r)\setminus \mE(x_0,r-1)}\left(\|z_{\me}\|^2_{H^2(\me)} + \|(z_r)_{\me}\|^2_{H^2(\me)}\right)+\sum_{\me\in\mE\setminus \mE(x_0,r)}\|z_{\me}\|^2_{H^2(\me)}\right)\\
        &<K\cdot \left(\ve+c\cdot \ve \right).
\end{split}
\end{equation}
Since $\ve>0$ was arbitrary, \eqref{eq:C2estimate}, and thus \textbf{(C2)}, is satisfied.

\end{proof}

\section{Benjamini--Schramm convergence of heat semigroups on quantum graphs}\label{sec:BSlimitquantumgraphs}

In this section we first follow \cite[Sec.~3]{AISW2021}. We introduce an equivalence relation on quantum graphs and define a distance between the equivalence classes.

\begin{defi}\label{defi:rootedgraph}
We call $(G,\mv)$ a \emph{rooted discrete graph} if $G$ is a discrete graph and $\mv$ is a marked vertex in $G$. If $(G,\mv)$ and $(\wG,\hat{\mv})$ are two rooted discrete graphs, we denote
\[\phi\colon (G,\mv)\xrightarrow{\sim} (\wG,\wmv)\]
if $\phi$ is a graph isomorphism -- that is, an adjacency preserving bijection between the vertices of $G$ and $\wG$ -- such that $\phi(\mv)=\wmv$.
\end{defi}

\begin{defi}\label{defi:Qisomorph}
Let $\mrQ =(\mG, W,\beta, U, x_0)=(\mcQ, x_0)$ and $\wmrQ =(\wmG, \wW,\wbeta, \wU, \wx)=(\wmcQ, \wx)$ be two rooted quantum graphs  with $\mG=(\mV,\mE,L)$ and $\wmG=(\wmV,\wmE,\wL)$, respectively. We say that $\mrQ$ and $\wmrQ$ are \emph{equivalent} if there exists a graph isomorphism
\[\phi: \left(G^{x_0},\mv_{x_0}\right)\xrightarrow{\sim} \left(\wG^{\wx},\mv_{\wx}\right)\] satisfying 
\[L^{x_0}=\wL^{\wx}\circ\phi,\, W^{x_0}=\wW^{\wx}\circ\phi,\,\phi\circ\beta^{x_0}=\wbeta^{\wx},\, U^{x_0}=\wU^{\wx}\circ\phi.\]
We denote the equivalence class of $\mrQ$ by $[\mrQ]$ and by
\begin{equation}\label{eq:Qstardefi}
\mbQs\coloneqq\left\{[\mrQ]:\mrQ\text{ is a rooted quantum graph}\right\}
\end{equation}
the set of equivalence classes.
\end{defi}

\begin{defi}[Equation (3.1) of \cite{AISW2021}]\label{defi:metricrooted}
Let $\mrQ=(\mcQ, x_0) =(\mG, W,\beta, U, x_0)$ and $\wmrQ=(\wmcQ, \wx) =(\wmG, \wW,\wbeta, \wU, \wx)$ be two rooted quantum graphs with $\mG=(\mV,\mE,L)$ and $\wmG=(\wmV,\wmE,\wL)$, respectively. For any $k\in\nat$, denote by $\mG(x_0,k)$ and  $\wmG(\wx,k)$ the $k$-balls around the roots of $\mrQ$ and $\wmrQ$, respectively, see \eqref{eq:Bx0r}. We define a pseudometric between them as follows
\begin{equation}\label{eq:metricrooted}
d((\mcQ, x_0),(\wmcQ, \wx))=\frac{1}{1+\alpha},
\end{equation}
where
\begin{align} \alpha=\sup&\left\{r>0 :\exists \phi: \mG(x_0, \left \lfloor{r}\right \rfloor )\xrightarrow{\sim} \wmG(\wx, \left \lfloor{r}\right \rfloor)\text{ with }\phi\circ\beta^{x_0}=\wbeta^{\wx},\right.\\
&\left. \delta_{\left \lfloor{r}\right \rfloor ,\phi}\left((L^{x_0}, W^{x_0}, U^{x_0}), (\wL^{\wx}, \wW^{\wx}, \wU^{\wx})\right)<\frac{1}{r}\right\}\end{align}
with
\begin{align}
\delta_{k,\phi}&\left((L^{x_0}, W^{x_0}, U^{x_0}), (\wL^{\wx}, \wW^{\wx}, \wU^{\wx})\right)=\max\left\{\max_{\me\in\mE(x_0,k)}\left|L^{x_0}_{\me}-\wL^{\wx}_{\phi(\me)}\right|,\right.\\
&\left.\max_{\me\in\mE(x_0,k)}\sup_{t\in [0,1]}\left|W^{x_0}_{\me}(tL^{x_0}_{\me})-\wW^{\wx}_{\phi(\me)}(t\wL^{\wx}_{\phi(\me)})\right|,\max_{\mv\in \mB(x_0,k)}\left\|U^{x_0}_{\mv}-\wU^{\wx}_{\phi(\mv)}\right\|\right\},\quad k\in\nat,
\end{align}
where $\mE(x_0,k)$ denotes the edges connecting the vertices of $\mB(x_0,k)$ in $G^{x_0}$, cf.~Definition \ref{defi:rballs}.
\end{defi}

\begin{rem}\label{rem:distKN}
    Assume that  in the above definition, for the quantum graphs $\mcQ$, $\wmcQ$, in all vertices  continuity and Kirchhoff-Neumann conditions hold, cf.~\eqref{eq:wcontcond}, \eqref{eq:wKNcond}. Then the metric $d$ simplifies to the following one:
\begin{equation}
d((\mcQ, x_0),(\wmcQ, \wx))=\frac{1}{1+\gamma}
\end{equation}
where
\begin{align} \gamma=\sup&\left\{r>0 :\exists \phi: \mG(x_0, \left \lfloor{r}\right \rfloor )\xrightarrow{\sim} \wmG(\wx, \left \lfloor{r}\right \rfloor)\text{ with }\phi\circ\beta^{x_0}=\wbeta^{\wx},\right.\\
&\left. \varrho_{\left \lfloor{r}\right \rfloor ,\phi}\left((L^{x_0}, W^{x_0}), (\wL^{\wx}, \wW^{\wx})\right)<\frac{1}{r}\right\}\end{align}
with
\begin{align}
\varrho_{k,\phi}\left((L^{x_0}, W^{x_0}), (\wL^{\wx}, \wW^{\wx})\right)=\max&\left\{\max_{\me\in\mE(x_0,k)}\left|L^{x_0}_{\me}-\wL^{\wx}_{\phi(\me)}\right|,\right.\\
&\left.\max_{\me\in\mE(x_0,k)}\sup_{t\in [0,1]}\left|W^{x_0}_{\me}(t)-\wW^{\wx}_{\phi(\me)}(t)\right|\right\},\quad k\in\nat,
\end{align}
where $\mE(x_0,k)$ denotes the edges connecting the vertices of $\mB(x_0,k)$ in $G^{x_0}$, cf.~Definition \ref{defi:rballs}.
\end{rem}

\begin{rem}\label{rem:Qstarsepcomplete}
The pseudometric $d$ defined in \eqref{eq:metricrooted} extends naturally to the set of equivalence classes $\mbQs$. By \cite[Lem.~3.4]{AISW2021}, we also have that $(\mbQs,d)$ is a separable complete metric space. 
\end{rem}

From this point on, all quantum graphs under consideration carry continuity and Kirchhoff vertex conditions. The corresponding vertex matrices $U_{\mv}=\frac{2}{d(\mv)}\mathbf{1}-\Id$, see \eqref{eq:UvconKN}, are real \emph{symmetric} -- equivalently, the vertex conditions have the real projection form \eqref{eq:KNprojections} of Remark \ref{rem:contwKNcond} -- and the potential $W$ is real-valued. Hence $A_{\mcQ}$, consequently the members of the semigroup $\left(S_{\mcQ}(t)\right)_{t\geq 0}$  are \emph{real} operators. 
For the rest of the paper we therefore work with real-valued functions: all function spaces are understood over $\real$, and the results of Sections \ref{sec:heatsgronquantumgraphs} and \ref{sec:approxheatsgr} apply by restriction to the real subspace.\vspace{0.5cm}

For $D\in\nat$, $0<m\leq M$ define the subset
\begin{equation}
\mbKs\subset \mbQs
\end{equation}
as the subset of equivalence classes $[\mcQ,x_0]=[\mG, W,\beta, U, x_0]$ such that \begin{equation}\label{eq:QstarDmMK}
\begin{split}
&U_{\mv}\text{ represents the continuity and Kirchhoff's}\\
&\text{conditions for each }\mv\in\mV\text{  (cf.~\eqref{eq:UvconKN})},\\
& d(\mv)\leq D\text{ for all }\mv\in\mV,\\
& m\leq \underline{L}(\mcQ)\leq \overline{L}(\mcQ)\leq M,\\
&W_{\me}\in \Lip([0,1]),\; \max\{\|W_{\me}\|_{\infty}, \Lip(W_{\me}) \} \leq M,\,\forall \me\in\mE.
\end{split}
\end{equation}
Here $\Lip(I)$ denotes the set of Lipschitz-continuous functions on the interval $I$ and $\Lip(f)$ is the Lipschitz-constant of $f$. It is clear that unitary matrices representing the continuity and Kirchhoff's conditions are invariant under the isomorphism from Definition \ref{defi:Qisomorph}, hence, $\mbKs$ is a well-defined subset of $\mbQs$.

\begin{rem}
    Combining arguments from the proofs of \cite[Lem.~3.4 and 3.6]{AISW2021}, one shows that $\mbKs$ is a closed subset of $(\mbQs,d)$. 
\end{rem}

Below we will need to interpret the semigroup operators associated with different quantum graphs to act on a common Hilbert space $\mcH$. Hence, we define
 \begin{equation}\label{eq:mcHell2}
     \mcH\coloneqq \ell^2(L^2(0,1))
 \end{equation}
with the norm
\begin{equation}
    \sum_{j=1}^{\infty}\|f_j\|^2_{L^2(0,1)}\eqqcolon \|f\|^2_{\mcH}.
\end{equation}
Similarly, we define
\begin{equation}\label{eq:mcH2ell2}
     \mcH^2\coloneqq \ell^2(H^2(0,1))
 \end{equation}
with the norm
\begin{equation}
    \sum_{j=1}^{\infty}\|f_j\|^2_{H^2(0,1)}\eqqcolon \|f\|^2_{\mcH^2}.
\end{equation}

\begin{notat}\label{notat:mrQmcQx}
Let $\mrQ=(\mcQ,x_0)$ be a rooted quantum graph satisfying \eqref{eq:QstarDmMK}, which can be finite or infinite. In the following, the notations
\begin{equation}
    \mcH_{\mrQ},\quad \mE_{\mrQ},\quad A_{\mrQ}, \quad \left(S_{\mrQ}(t)\right)_{t\geq 0}
\end{equation}
denote the Hilbert space, edge set, generator and semigroup from Definition \ref{defi:mcHQ}, \ref{defi:opAQ}, Proposition \ref{prop:AQgen}, respectively, associated with the quantum graph $\mcQ^{x_0}$, cf.~Definition \ref{defi:Qx0}.
\end{notat}

We are going to construct a bounded linear map
\begin{equation}\label{eq:defPsiQ}
    \Psi_{\mrQ}:\mcH\to\mcH_{\mrQ},\quad \Psi_{\mrQ}f=f_{\mrQ}\in\mcH_{\mrQ},
\end{equation}
such that $\Psi_{\mrQ}$ matches coordinates of $f$ one by one to coordinates of $f_{\mrQ}$, and is compatible with the equivalence relation from Definition \ref{defi:Qisomorph}.
For this purpose, we define an injective map 
\begin{equation}\label{defi:psiQ}
    \psi_{\mrQ}\colon \mE_{\mrQ}\to\nat
\end{equation}
which numbers the edges of $\mcQ^{x_0}$ by consecutive natural numbers. 

We define the map $\psi_{\mrQ}$ via a breadth-first search (BFS) traversal of $\mG^{x_0}$ started at the root $\mv_{x_0}$, in which the edges incident to a vertex are always explored in the order given by the labeling maps $\beta^{\mv}$ from \eqref{eq:betav}. 
First, the BFS traversal yields a linear discovery order on the vertices of $\mG^{x_0}$. Let $\mv_0\coloneqq \mv_{x_0}$ and process the vertices in a first-in-first-out queue: when a vertex $\mv$ is processed, its outgoing oriented edges are visited in the order $\beta^{\mv}(1),\dots,\beta^{\mv}(d(\mv))$, and those of their endpoints that have not been discovered so far are appended to the queue in this order. Since $\mG^{x_0}$ is connected and locally finite, this yields an enumeration $\mv_0,\mv_1,\mv_2,\dots$ of the vertices of $\mG^{x_0}$ which lists the balls $\mB(x_0,k)$, $k\in\nat$, consecutively.

Using this order, we assign consecutive natural numbers $\psi_{\mrQ}(\me)$ starting with $1$ to the edges $\me\in\mE_{\mrQ}(x_0,1)$ of the (finite) $1$-ball around $\mv_{x_0}$ in $\mcQ^{x_0}$. After we continue with the numbering of the edges in $\mE_{\mrQ}(x_0,2)\setminus \mE_{\mrQ}(x_0,1)$ starting with $|\mE_{\mrQ}(x_0,1)|+1$, etc. Within each set $\mE_{\mrQ}(x_0,k)\setminus \mE_{\mrQ}(x_0,k-1)$ (where $\mE_{\mrQ}(x_0,0)\coloneqq\emptyset$), the edges are ordered as follows: to each edge $\me$ we assign the pair $(i,j)$, where $\mv_i$ is the endpoint of $\me$ with the smaller BFS index and $j$ is the label of $\me$ at $\mv_i$, that is, $\beta^{\mv_i}(j)=\me$. Then we order the edges lexicographically by these pairs. For more details see \cite[Def.~1.24.]{Hofstad2024} and \cite[Sec.~3]{AMR07}.\\

The connectedness of $\mG$ implies that if $\me\in\mE_{\mrQ}$ is arbitrary, for sufficiently large $r>0$, $\me\in \mE_{\mrQ}(x_0,r)$ holds - hence, each edge attains a number in this way. If the edge $\me$ has number $\psi_{\mrQ}(\me)\in \nat$, we define
\begin{equation}\label{eq:Qnumbering}
    f_{\mrQ,\me}\coloneqq f_{\psi_{\mrQ}(\me)},\quad \me\in\mE_{\mrQ}.
\end{equation}
Thus, we obtained a linear map $\Psi_{\mrQ} :\mcH\to\mcH_{\mrQ}$,
\begin{equation}\label{eq:PsiQdef}
\Psi_{\mrQ} f\coloneqq f_{\mrQ}=\left(f_{\mrQ,\me}\right)_{\me\in\mE_{\mrQ}}\in\mcH_{\mrQ}.
\end{equation}

\begin{rem}\label{rem:psiQcompatible}
    Let  $\mrQ=(\mcQ,x_0)$, $\wmrQ=(\wmcQ,\wx)$ be  rooted quantum graphs with
    \begin{equation}
        d(\mrQ,\wmrQ)<\frac{1}{1+r},
    \end{equation}
    hence,
     \begin{equation}
\exists \phi: \wmG(\wx, r )\xrightarrow{\sim} \mG(x_0, r)\text{ with }\beta^{x_0}=\phi\circ\wbeta^{\wx},
\end{equation}
where the label condition is understood for all bonds of the $r$-balls (in particular, at the vertices of $\mB_{\wmrQ}(\wx,r)\setminus\mB_{\wmrQ}(\wx,r-1)$ it refers to the labels of the incident edges lying in the $r$-ball).
Then a straightforward induction along the BFS traversal shows that
\begin{equation}\label{eq:numberingcompatible}
    \psi_{\wmrQ}(\me)=\psi_{\mrQ}(\phi(\me)),\quad \me\in \mE_{\wmrQ}(\wx,r),
\end{equation}
that is, for any $f\in\mcH$,
\begin{equation}\label{eq:fQhatfQ}
    f_{\wmrQ,\me}=f_{\mrQ, \phi(\me)},\quad \me\in \mE_{\wmrQ}(\wx,r)
\end{equation}
holds. 
\end{rem}

\begin{rem}\label{rem:HqL2norm}
     If $\mrQ$ satisfies \eqref{eq:QstarDmMK}, we have 
\begin{equation}\label{eq:mcHmchQnormequiv}  
   \|\Psi_{\mrQ}f\|_{\mcH_{\mrQ}}= \|f_{\mrQ}\|_{\mcH_{\mrQ}}\leq \sqrt{M}\cdot \|f\|_{\mcH},\quad f\in\mcH.
    \end{equation}
    That is, $\Psi_{\mrQ}:\mcH\to\mcH_{\mrQ}$ is bounded with
    \begin{equation}\label{eq:PsiQnormestimate1}
        \|\Psi_{\mrQ}\|\leq \sqrt{M}.
    \end{equation}
 \end{rem}
It is clear that if the number of edges in $\mrQ$ is infinite, $\Psi_{\mrQ}$ is an isomorphism with
\begin{equation}
    \|\Psi_{\mrQ}^{-1}f_{\mrQ}\|_{\mcH}\leq \frac{1}{\sqrt{m'}}\cdot\|f_{\mrQ}\|_{\mcH_{\mrQ}},\text{ hence }\|\Psi_{\mrQ}^{-1}\|\leq \frac{1}{\sqrt{m'}},
\end{equation}
where
\begin{equation}\label{eq:rootmargin}
    m'=m'(\mrQ)\coloneqq\min\{\rho(\mrQ),\,m\}\text{ with }
    \rho(\mrQ)\coloneqq\min\{x_0,\,L_{\me_0}-x_0\}.
\end{equation}
We call $\rho(\mrQ)$ the \emph{root margin} of $\mrQ$: it is the distance from the root $x_0$ to the nearest vertex of $\mcQ$. The two edges of $\mcQ^{x_0}$ created by splitting $\me_0$ at the root have lengths $x_0$ and $L_{\me_0}-x_0$, which are not bounded from below by $m$.

Otherwise, if $\mrQ$ is finite, denote by $\mcH_{|\mrQ|}$ the subspace of $\mcH$ generated by the first $|\mE_\mrQ|$ canonical basis vectors, where $|\mE_\mrQ|$ is the number of edges in $\mrQ$. Then the restriction
\begin{equation}
    \Psi_{\mrQ}: \mcH_{|\mrQ|}\to\mcH_{\mrQ}
\end{equation}
is bijective, and its inverse can be defined - with a slight abuse of notation - as
\begin{equation}\label{eq:PsiQinversefinite}
    \Psi_{\mrQ}^{-1}:\mcH_{\mrQ}\to \mcH_{|\mrQ|}\subset \mcH.
\end{equation}
In this case we also have that $\Psi_{\mrQ}^{-1}$ is bounded with
\begin{equation}\label{eq:PsiQnormestimate2}
\|\Psi_{\mrQ}^{-1}\|\leq \frac{1}{\sqrt{m'}}.
\end{equation}

By assumptions \eqref{eq:QstarDmMK}, using Remark \ref{rem:contwKNcond} and Proposition \ref{prop:AQgen}, we have that the operator $(A_{\mrQ},\mcD(A_{\mrQ}))$ is bounded from above by $0$ and the semigroup $(S_{\mrQ}(t))_{t\geq 0}$ is contractive on $\mcH_{\mrQ}.$
 \vspace{0.5cm}

In the following, we will need to ``transform'' the semigroup $(S_{\mrQ}(t))_{t\geq 0}$ so that it acts on $\mcH$ instead of $\mcH_{\mrQ}$.

 \begin{defi}\label{defi:SQtonmcH}
 If $\mrQ$ is infinite, we define
 \begin{equation}
     S_{\mrQ,\mcH}(t)f\coloneqq \Psi_{\mrQ}^{-1}S_{\mrQ}(t)\Psi_{\mrQ} f=\Psi_{\mrQ}^{-1}S_{\mrQ}(t)f_{\mrQ} ,\quad t\geq 0, f\in\mcH.
 \end{equation}
 It is straightforward that $\left(S_{\mrQ,\mcH}(t)\right)_{t\geq 0}$ is a strongly continuous semigroup on $\mcH$.

 If $\mrQ$ is finite, using \eqref{eq:PsiQinversefinite} we define  \begin{equation}\label{eq:SQonmcH}
   (S_{\mrQ,\mcH}(t)f)_{j}\coloneqq\begin{cases}
   (\Psi_{\mrQ}^{-1}S_{\mrQ}(t)\Psi_{\mrQ} f)_{j},&  1\leq j\leq |\mE_{\mrQ}|, \\
   f_j, & j>|\mE_{\mrQ}|, \end{cases}\quad t\geq 0,\quad f\in\mcH.
\end{equation}
It is easy to see that $\left(S_{\mrQ,\mcH}(t)\right)_{t\geq 0}$  is  a strongly continuous semigroup on $\mcH$ in this case as well.
\end{defi}

We also define the generator of the above semigroup acting on $\mcH$.

\begin{defi}
Let
\begin{equation}\label{eq:domAQonmcH}
\mcD\coloneqq \{f\in\mcH^2:f_{\mrQ}=\Psi_{\mrQ} f\in\mcD(A_{\mrQ})\}.
\end{equation}
If $\mrQ$ is infinite let
\begin{equation}\label{eq:AQonmcHi}
   A_{\mrQ,\mcH}f\coloneqq \Psi_{\mrQ}^{-1}A_{\mrQ}\Psi_{\mrQ} f=\Psi_{\mrQ}^{-1}A_{\mrQ}f_{\mrQ},\quad f\in\mcD(A_{\mrQ,\mcH}),
\end{equation}
and $\mcD(A_{\mrQ,\mcH})=\mcD$.

If $\mrQ$ is finite let
\begin{equation}\label{eq:AQonmcHf}
   (A_{\mrQ,\mcH}f)_{j}\coloneqq\begin{cases}
   (\Psi_{\mrQ}^{-1}A_{\mrQ}\Psi_{\mrQ} f)_{j},&  1\leq j \leq |\mE_{\mrQ}|, \\
   0, & j>|\mE_{\mrQ}|, \end{cases}\quad  f\in\mcD.
\end{equation}

 It is straightforward that if $\mrQ$ is infinite, the operator $\left(A_{\mrQ,\mcH},\mcD\right)$ is the generator of the strongly continuous semigroup $\left(S_{\mrQ,\mcH}(t)\right)_{t\geq 0}$ on $\mcH$. If $\mrQ$ is finite, the generator of $\left(S_{\mrQ,\mcH}(t)\right)_{t\geq 0}$ -- denoted by $\left(A_{\mrQ,\mcH},\mcD(A_{\mrQ,\mcH})\right)$ -- is the closure of the operator defined on $\mcD$ by \eqref{eq:AQonmcHf}; in particular, $\mcD$ is a core for it. 
\end{defi}

 By \eqref{eq:PsiQnormestimate1}, \eqref{eq:PsiQnormestimate2} and the contractivity of $(S_{\mrQ}(t))_{t\geq 0}$ on $\mcH_{\mrQ}$ we also have that
\begin{equation}\label{eq:SQnormonmcH}
\|S_{\mrQ,\mcH}(t)\|\leq \sqrt{\frac{M}{m'}},\quad t\geq 0,
\end{equation}
with $m'=m'(\mrQ)$ from \eqref{eq:rootmargin}. Note that this bound depends on the rooted quantum graph through its root margin; a bound uniform over $\mbKs$ is not available, since $\rho(\mrQ)$ can be arbitrarily small. The next lemma shows, however, that along $d$-convergent sequences the root margin -- and with it the above norm bounds -- can be controlled uniformly by the limit graph.

Let $r\in\nat^+$ and let $\mrQ=(\mcQ,x_0)$ and $\wmrQ=(\wmcQ,\wx)$ be two (finite or infinite) rooted quantum graphs in $\mbKs$ such that
\begin{equation}
    d(\mrQ,\wmrQ)<\frac{1}{1+r}
\end{equation}
holds. Then by Definition \ref{defi:metricrooted} and Remark \ref{rem:distKN} there exists an isomorphism of the $r$-balls around the roots
\begin{equation}
    \phi:\wmG(\wx,r)\xrightarrow{\sim}\mG(x_0,r)\text{ with }\beta^{x_0}=\phi\circ\wbeta^{\wx},
\end{equation}
such that
\begin{equation}\label{eq:LedistLemma}
    \begin{split}
        \max_{\me\in\mE_{\wmrQ}(\wx,r)}\left|\wL^{\wx}_{\me}-L^{x_0}_{\phi(\me)}\right|&<\frac{1}{r},\\
\max_{\me\in\mE_{\wmrQ}(\wx,r)}\sup_{t\in [0,1]}\left|W^{\wx}_{\me}(t)-W^{x_0}_{\phi(\me)}(t)\right|&<\frac{1}{r}.
    \end{split}
\end{equation}

\begin{lem}\label{lem:rootmargin}
Let $\mrQ=(\mcQ,x_0)$ and $\wmrQ=(\wmcQ,\wx)$ be rooted quantum graphs in $\mbKs$ with
\begin{equation}
    d(\mrQ,\wmrQ)<\frac{1}{1+r}\quad\text{for some integer } r\geq \max\left\{1,\,2/\rho(\mrQ)\right\},
\end{equation}
and let $\phi$ be the isomorphism of the $r$-balls from \eqref{eq:LedistLemma}. Then the following hold.
\begin{enumerate}
\item For every $k\leq r$, $\phi$ maps the set of vertices of $\wmG^{\wx}$ at distance exactly $k$ from $\mv_{\wx}$ bijectively onto the set of vertices of $\mG^{x_0}$ at distance exactly $k$ from $\mv_{x_0}$. In particular, the greatest distance from the root in $\mrQ$ is at least $r$ if and only if the same holds in $\wmrQ$; and if $\mrQ$ is finite with greatest root-distance less than $r$, then so is $\wmrQ$, and $\phi$ is an isomorphism of the entire rooted graphs.
\item Every edge of $\wmcQ^{\wx}$ has length at least $m''(\mrQ) \coloneqq\min\left\{\rho(\mrQ)/2,\,m\right\}$; consequently,
\begin{equation}\label{eq:transferbounds}
   \rho(\wmrQ)\geq\frac{\rho(\mrQ)}{2},\qquad \|\Psi_{\wmrQ}^{-1}\|\leq\frac{1}{\sqrt{m''(\mrQ)}},\qquad
    \|S_{\wmrQ,\mcH}(t)\|\leq\sqrt{\frac{M}{m''(\mrQ)}},\quad t\geq 0.
\end{equation}
\end{enumerate}
\end{lem}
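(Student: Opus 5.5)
Part (1) follows from the fact that $\phi$ is a rooted graph isomorphism between the $r$-balls of $G^{\wx}$ and $G^{x_0}$. Distances from the root are therefore preserved up to level $r$. Write $\mathrm{dist}$ for the graph distance. For $k\le r$, any geodesic from $\mv_{\wx}$ to a vertex $\mw$ with $\mathrm{dist}(\mv_{\wx},\mw)=k$ stays inside the $k$-ball, hence inside the $r$-ball, so $\phi$ maps it to a path of length $k$, giving $\mathrm{dist}(\mv_{x_0},\phi(\mw))\le k$. Applying the same argument to $\phi^{-1}$ gives the reverse inequality. Hence $\phi$ maps the level sets $\{\mathrm{dist}=k\}$ bijectively onto each other for every $k\le r$.

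The statement about the greatest root-distance then follows by checking whether level $r$ is nonempty. If $\mrQ$ is finite with greatest root-distance $<r$, then its whole graph lies in the $(r-1)$-ball. By the level bijection, the level-$r$ set of $\wmrQ$ is empty, so by connectedness $\wmrQ$ has no vertices beyond distance $r-1$. Its entire graph is thus contained in the $r$-ball, and $\phi$ is an isomorphism of the full rooted graphs. The one delicate point is the edges: $\mE(x_0,r)$ consists of all edges joining vertices of the $r$-ball, so when everything lies in the $(r-1)$-ball, every edge is included. This makes $\phi$ a global isomorphism.

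For part (2), I split the edges of $\wmcQ^{\wx}$ into two kinds. Every edge that is not one of the two root edges is an original edge of $\wmcQ$, so it has length at least $m\ge m''(\mrQ)$ by \eqref{eq:QstarDmMK}. The two root edges incident to $\mv_{\wx}$ lie in $\mE_{\wmrQ}(\wx,1)\subset\mE_{\wmrQ}(\wx,r)$, and $\phi$ sends them to the two edges incident to $\mv_{x_0}$. The point to check is that the root vertex has degree $2$ and $\phi(\mv_{\wx})=\mv_{x_0}$, so the split edges correspond to the split edges. By \eqref{eq:LedistLemma}, their lengths satisfy
\begin{equation}
\wL^{\wx}_{\me}>L^{x_0}_{\phi(\me)}-\frac1r\ge \rho(\mrQ)-\frac{\rho(\mrQ)}{2}=\frac{\rho(\mrQ)}{2},
\end{equation}
using $r\ge 2/\rho(\mrQ)$. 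This gives $\rho(\wmrQ)\ge\rho(\mrQ)/2$, and so every edge of $\wmcQ^{\wx}$ has length at least $m''(\mrQ)$.

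The norm bounds then follow by the same computation as \eqref{eq:PsiQnormestimate2}. Since $\|g\|^2_{\mcH_{\wmrQ}}=\sum_{\me}\wL_{\me}\|g_{\me}\|^2_{L^2(0,1)}\ge m''\sum_{\me}\|g_{\me}\|^2_{L^2(0,1)}$, we get $\|\Psi_{\wmrQ}^{-1}\|\le 1/\sqrt{m''}$. For the semigroup, I combine this with $\|\Psi_{\wmrQ}\|\le\sqrt M$ and contractivity of $S_{\wmrQ}(t)$. In the finite case the identity acts on the remaining coordinates $j>|\mE_{\wmrQ}|$. There I would argue coordinatewise on the orthogonal decomposition, since $\sqrt{M/m''}\ge1$, to obtain $\|S_{\wmrQ,\mcH}(t)\|\le\sqrt{M/m''(\mrQ)}$. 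The main obstacle is this last bookkeeping in the finite case together with correctly identifying the root edges under $\phi$; the remaining steps are direct.
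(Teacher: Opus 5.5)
Your proposal is correct and follows essentially the same route as the paper: levels are preserved because geodesics stay inside the $r$-ball (with $\phi^{-1}$ giving the reverse inequality), connectedness settles the finite case, $1/r\le\rho(\mrQ)/2$ controls the two root edges, and the norm bounds come as in \eqref{eq:PsiQnormestimate2} and \eqref{eq:SQnormonmcH}. Your remark that the identity block on the coordinates $j>|\mE_{\wmrQ}|$ is harmless because $\sqrt{M/m''(\mrQ)}\geq 1$ is a detail the paper leaves implicit.
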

\begin{proof}
(1) A shortest path from the root to a vertex at distance $k\leq r$ stays inside the $r$-ball, 
so distances up to $r$ are intrinsic to the $r$-balls, and both $\phi$ and $\phi^{-1}$ preserve adjacency and the root; hence $\phi$ preserves the distance-$k$ vertex sets for all $k\leq r$. If the distance-$r$ set of $\mrQ$ is empty, then so is that of $\wmrQ$; since the graphs are connected, every vertex then has distance at most $r-1$ from the root, so both graphs coincide with their finite $(r-1)$-balls and $\phi$ is an isomorphism of the entire rooted graphs.

(2) The two edges of $\wmcQ^{\wx}$ incident to the root vertex $\mv_{\wx}$ belong to the $1$-ball, and by \eqref{eq:LedistLemma} their lengths differ from $x_0$ and $L_{\me_0}-x_0$ by less than $1/r\leq\rho(\mrQ)/2$, so they are at least $\rho(\mrQ)/2$; all other edges of $\wmcQ^{\wx}$ are edges of $\wmcQ$ and have length at least $m$ by \eqref{eq:QstarDmMK}. The norm bounds then follow as in \eqref{eq:PsiQnormestimate2} and \eqref{eq:SQnormonmcH}.
\end{proof}

The next claim states that for such quantum graphs, for any $z\in\mcD(A_{\mrQ,\mcH})$, there exists $\wz\in\mcD(A_{\wmrQ,\mcH})$ such that the squared $\mcH^2$-norm of the difference $z-\wz$ can be estimated by $1/r^2$ times the squared $\mcH^2$-norm of $z$, plus the norm squares of the coordinates $z_j$ corresponding to edges not belonging to $\mE_\mrQ(x_0,r-1)$; see Figure~\ref{fig:rball} for an illustration of the balls involved.

\begin{figure}[!ht]
\centering
\begin{tikzpicture}[
    vertex/.style={circle, draw, minimum size=7mm},
    redvertex/.style={vertex, fill=red!60},
    edge/.style={draw, thick},
    rededge/.style={edge, red},
    root/.style={redvertex, double}
]

\node[root] (x0) {$x_0$};                       
\node[redvertex, right=of x0] (v1) {$\mv_1$};     

\node[redvertex, above right=of v1] (v2) {$\mv_2$}; 
\node[redvertex, below right=of v1] (v3) {$\mv_3$}; 
\node[redvertex, right=of v2] (v4) {$\mv_4$};       
\node[redvertex, right=of v3] (v5) {$\mv_5$};       

\node[redvertex, right=of v4] (v6) {$\mv_6$};       
\node[redvertex, right=of v6] (v7) {$\mv_7$};       
\node[vertex, right=of v7] (v8) {$\mv_8$};          
\node[vertex, below=of v8] (v9) {$\mv_9$};          

\draw[rededge] (x0) -- (v1);
\draw[rededge] (v1) -- (v2);
\draw[rededge] (v1) -- (v3);
\draw[rededge] (v2) -- (v4);
\draw[rededge] (v3) -- (v5);
\draw[rededge] (v4) -- (v5);   
\draw[rededge] (v4) -- (v6);
\draw[rededge] (v6) -- (v7);
\draw[rededge] (v5) -- (v6);

\draw[edge] (v7) -- (v8);
\draw[edge] (v7) -- (v9);
\draw[edge] (v8) -- (v9);
\end{tikzpicture}
\caption{An example of $\mG(x_0,6)$. The vertices and the edges in $\mG(x_0,5)$ are colored in red; the ``boundary'' vertices are drawn as unfilled circles and the boundary edges in black.}
\label{fig:rball}
\end{figure}

\begin{prop}\label{lem:DAQDAQprimeonball}
Let $D\in\nat$, $0<m\leq M$ be given. Assume that $\mrQ=(\mcQ,x_0)$ and $\wmrQ=(\wmcQ,\wx)$ are two rooted quantum graphs in $\mbKs$ such that
\begin{equation}
    d(\mrQ,\wmrQ)<\frac{1}{1+r}\quad\text{for some integer }r\geq\max\left\{1,\,\frac{2}{\rho(\mrQ)}\right\}
\end{equation}
holds, with the root margin $\rho(\mrQ)$ from \eqref{eq:rootmargin}. Let $z\in\mcD\subset \mcD(A_{\mrQ,\mcH})$ be given, with $\mcD$ defined in \eqref{eq:domAQonmcH}.
\begin{enumerate}
    \item  If the greatest distance from the root in $\mrQ$ is at least $r$ (in particular, when $\mrQ$ is infinite) then there exists $K>0$, depending only on $D$, $m$, $M$ and $\rho(\mrQ)$, and $\wz\in\mcD(A_{\wmrQ,\mcH})$ such that
\begin{equation}\label{eq:lemmaclaim}
    \|z-\wz\|^2_{\mcH^2}\leq K\cdot \left(\frac{1}{r^2}\cdot  \|z\|^2_{\mcH^2}+\sum_{j\geq s_{r-1}}\|z_j\|^2_{H^2(0,1)}\right),
\end{equation}
where
\begin{equation}\label{eq:srminus1defi}
    s_{r-1}=|\mE_\mrQ(x_0,r-1)|=|\mE_{\wmrQ}(\wx,r-1)|.
\end{equation}
\item If $\mrQ$ is finite and the greatest distance from the root in $\mrQ$  is less than $r$ then there exists $\hat{K}>0$, depending only on $D$, $m$, $M$ and $\rho(\mrQ)$, and $\wz\in\mcD(A_{\wmrQ,\mcH})$ such that
\begin{equation}\label{eq:lemmaclaimfinite}
    \|z-\wz\|_{\mcH^2}\leq \hat{K}\cdot \frac{1}{r}\cdot  \|z\|_{\mcH^2}.
\end{equation}
\end{enumerate}

\end{prop}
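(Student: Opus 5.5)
We construct $\wz$ by transplanting $z$ edgewise through the BFS identification, then repairing the vertex conditions, which are spoiled only by the length mismatch in the weighted Kirchhoff condition \eqref{eq:wKNcond}. By Remark \ref{rem:psiQcompatible}, for edges $\me\in\mE_{\wmrQ}(\wx,r)$ the coordinate $z_{\psi_{\wmrQ}(\me)}$ is exactly the function on $\phi(\me)$, so we work coordinatewise in $\mcH^2$.

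As a first step, set $w_j\coloneqq z_j$ for $j\leq s_r$ and $w_j\coloneqq 0$ otherwise. Since $z_{\mrQ}$ is continuous at the vertices and $\phi$ preserves incidences and labels, the vector $w$ transported to $\wmrQ$ satisfies continuity at every vertex of $\mB_{\wmrQ}(\wx,r-1)$. The weighted Kirchhoff sum at a vertex $\mv$ becomes $\sum_{\me}\frac{1}{\wL_{\me}}z'_{\phi(\me)}(\mv)$. Subtracting the true condition $\sum_{\me}\frac{1}{L_{\phi(\me)}}z'_{\phi(\me)}(\mv)=0$ leaves a defect
\[
\delta_{\mv}=\sum_{\me\in\mE_{\mv}}\Bigl(\tfrac{1}{\wL_{\me}}-\tfrac{1}{L_{\phi(\me)}}\Bigr)z'_{\phi(\me)}(\mv).
\]
By \eqref{eq:LedistLemma} and Lemma \ref{lem:rootmargin}(2), all lengths involved are bounded below by $m''(\mrQ)$, so $|\delta_{\mv}|\leq \frac{C}{r}\sum_{\me\in\mE_{\mv}}|z'_{\phi(\me)}(\mv)|$ with $C$ depending on $m,\rho(\mrQ)$. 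Hence, by the Sobolev trace bound $\|u'\|_\infty\lesssim\|u\|_{H^2(0,1)}$ and $d(\mv)\leq D$, we get $\sum_{\mv}|\delta_{\mv}|^2\leq \frac{C'}{r^2}\|z\|_{\mcH^2}^2$.

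To remove each defect, we pick for every vertex $\mv$ one incident edge (say $\wbeta^{\mv}(1)$). On the half of that edge adjacent to $\mv$, we add a fixed smooth bump $\eta$ scaled by the defect: $\eta$ has $\eta(0)=0$, $\eta'(0)=1$, and vanishes with its derivative near $1/2$. Concretely, we add $-\wL_{\me}\delta_{\mv}\eta$ in the local coordinate from $\mv$. This leaves vertex values unchanged and does not interfere with other vertices, since each correction lives on one half-edge. Different vertices choosing the same edge use opposite halves, so their corrections do not overlap. The corrections cost $\lesssim \sum_{\mv}|\delta_{\mv}|^2\|\eta\|_{H^2}^2\leq K/r^2\cdot\|z\|^2$.

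The boundary layer is handled as follows. In case (1), on edges in $\mE_{\wmrQ}(\wx,r)\setminus\mE_{\wmrQ}(\wx,r-1)$ and beyond, we do not know the structure of $\wmrQ$, so we cut off. On edges with one endpoint in the $(r-1)$-ball we keep $z$ on the half near that endpoint and interpolate to $0$ with the Hermite polynomial, exactly as in \eqref{eq:zrdef}--\eqref{eq:Hermiteinterpolpolynom} in the proof of Theorem \ref{thm:heatsgrconv}. Edges between boundary vertices, and all edges outside the $r$-ball, are set to $0$. The resulting function vanishes with its derivative at all vertices at distance $\geq r$ (and at distance $r$), so Kirchhoff and continuity hold there trivially. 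The cost is controlled by $\sum_{j\geq s_{r-1}}\|z_j\|_{H^2}^2$ via the same $C^1$-trace estimate. Defining $\wz$ as this function pulled back by $\Psi_{\wmrQ}^{-1}$ gives $\wz\in\mcD\subset\mcD(A_{\wmrQ,\mcH})$, and \eqref{eq:lemmaclaim} follows.

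In case (2), Lemma \ref{lem:rootmargin}(1) makes $\phi$ a global isomorphism, so no cutoff is needed and only the defect correction remains, which yields \eqref{eq:lemmaclaimfinite}.

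The main obstacle is ensuring that the repair preserves everything else while keeping constants uniform. The correction must keep continuity, keep the vertex values, and leave the other vertices' conditions untouched. It must also cost only $O(1/r)$ uniformly, with the two short root edges controlled via $m''(\mrQ)$. The half-edge bump construction above is the choice intended to achieve this.
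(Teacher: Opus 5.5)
Your proof is correct in its main line. It differs from the paper only in how the Kirchhoff defect is repaired.

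\textbf{The paper's repair.} The paper applies \cite[Prop.~A.1]{KS23EJP} to the finite graph $\wmG(\wx,r)$. This gives a global continuous correction $w$ with edge profiles $\alpha_{\me}e^{-\gamma x}+\beta_{\me}e^{-\gamma(1-x)}$, whose weighted Kirchhoff sums equal the defect vector $\Phi z$. The paper then has to argue that $\gamma$, and the constant in $\sum_{\me}\|w_{\me}\|^2_{H^2(0,1)}\leq K\|\Phi z\|^2$, depend only on the length bounds $[m''(\mrQ),M]$. It then subtracts $w$ and performs the same Hermite cutoff you describe.

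\textbf{Your repair.} Your half-edge bumps $-\wL_{\me}\delta_{\mv}\eta$, with $\eta(0)=0$, $\eta'(0)=1$ and $\eta$ supported in $[0,1/2)$, do the same job in a self-contained way:
\begin{itemize}
\item vertex values are untouched, so continuity is automatic;
\item each edge carries at most two bumps on disjoint halves, so the cost is explicitly at most $M^2\|\eta\|^2_{H^2(0,1)}\sum_{\mv}|\delta_{\mv}|^2$;
\item the Hermite data at $x=1/2$ on boundary edges are unaffected.
\end{itemize}
The decisive estimate $\sum_{\mv}|\delta_{\mv}|^2\lesssim r^{-2}m''(\mrQ)^{-4}\|z\|^2_{\mcH^2}$ is identical in both arguments. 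For Kirchhoff conditions your route is therefore more elementary; the paper's route only has the advantage of reusing existing machinery.

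\textbf{A step that fails in case (2).} Your first step zeroes all coordinates with $j>s_r$. In case (2), $s_r=|\mE_{\wmrQ}|$, so your $\wz$ vanishes on the padding coordinates $j>|\mE_{\wmrQ}|$. Membership $z\in\mcD$ puts no constraint on those coordinates beyond $z_j\in H^2(0,1)$, so they can be nonzero. Hence
\[
\|z-\wz\|^2_{\mcH^2}\geq\sum_{j>|\mE_{\wmrQ}|}\|z_j\|^2_{H^2(0,1)},
\]
which does not decay in $r$, and \eqref{eq:lemmaclaimfinite} fails. This would also break the later Trotter--Kato step, which needs approximants for every $z$ in the core $\mcD$, padding included.

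The fix is to keep $\wz_j\coloneqq z_j$ for $j>|\mE_{\wmrQ}|$, as the paper does. This is admissible, since all that is needed is $\wz\in\mcH^2$ and $\Psi_{\wmrQ}\wz\in\mcD(A_{\wmrQ})$, and the latter only involves the first $|\mE_{\wmrQ}|$ coordinates. In case (1) the zeroing is harmless, because those terms are absorbed into $\sum_{j\geq s_{r-1}}\|z_j\|^2_{H^2(0,1)}$.
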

\begin{proof}
Based on the above consideration, the assumption implies that there exists an isomorphism of the $r$-balls around the roots 
\begin{equation}
    \phi:\wmG(\wx,r)\xrightarrow{\sim}\mG(x_0,r)\text{ with }\beta^{x_0}=\phi\circ\wbeta^{\wx},
\end{equation} 
such that \eqref{eq:LedistLemma} holds. By Remark \ref{rem:psiQcompatible} we have
\begin{equation}\label{eq:zonQprimeball}
    z_{\wmrQ,\me}=z_{\mrQ,\phi(\me)},\quad \me\in\mE_{\wmrQ}(\wx,r).
\end{equation}

First we prove (1) and assume that the greatest distance from the root in $\mrQ$ is at least $r$. Let $z\in\mcD$ be given. Then $\Psi_{\mrQ}z\in\mcD(A_{\mrQ})$ holds, hence, the functions $\left(z_{\mrQ,\me}\right)_{\me\in\mE_{\mrQ}}$ satisfy the continuity and weighted Kirchhoff-Neumann conditions \eqref{eq:wcontcond}, \eqref{eq:wKNcond} in all vertices of $\mrQ$; in particular, in the vertices of $\mB_{\mrQ}(x_{0},r-1)$, where all the incident edges belong to $\mE_{\mrQ}(x_0,r)$.

Let \begin{equation}\label{eq:qdefi}
    q =|\mB_{\mrQ}(x_{0},r-1)|=|\mB_{\wmrQ}(\wx,r-1)|
    \end{equation}
be the number of the vertices in the $(r-1)$-balls around the roots in $\mrQ$ which is the same as in $\wmrQ$.
We define the linear operator $\Phi:\mcD(\Phi)\to\real^{q}$ as
\begin{equation}
    \Phi u \coloneqq
    \left(
        \sum\limits_{\me\in \mE_{\mv}(\wmrQ)}\frac{1}{\wL^{\wx}_{\me}}u_{\wmrQ,\me}'(\mv)
    \right)_{\mv\in \mB_{\wmrQ}(\wx,r-1)},\quad u\in \mcD(\Phi)
\end{equation}
with
\begin{equation}
    \mcD(\Phi)=\{u\in\mcH:u_j\in H^2(0,1)\text{ for all }1\leq j\leq s_r\},
\end{equation}
where, by the numbering procedure preceding \eqref{eq:Qnumbering}, the coordinates $u_j$ with $1\leq j\leq s_r=|\mE_{\wmrQ}(\wx,r)|$ are exactly those assigned to the edges of the $r$-ball of $\wmrQ$, and $\mE_{\mv}(\wmrQ)$ denotes the set of edges incident to $\mv$ in $\mE_{\wmrQ}$. Note that  for $\mv\in \mB_{\wmrQ}(\wx,r-1)$, all the edges incident to $\mv$ belong to $\mE_{\wmrQ}(\wx,r)$. 
By assumption, $z\in\mcD(\Phi)$ holds, and using \eqref{eq:zonQprimeball} we have
\begin{equation}\label{eq:Phiz}
    \Phi z= \left(
        \sum\limits_{\me\in \mE_{\mv}(\wmrQ)}\frac{1}{\wL^{\wx}_{\me}}z_{\mrQ,\phi(\me)}'(\mv)
    \right)_{\mv\in \mB_{\wmrQ}(\wx,r-1)}.
\end{equation}

\cite[Prop.~A.1]{KS23EJP}, applied to the finite quantum graph corresponding to the $r$-ball $\wmG(\wx,r)$, implies that there exists a function $w$ defined on the $r$-ball $\wmG(\wx,r)$, that is,
\begin{equation}
    w=\left(w_{\me}\right)_{\me\in \mE_{\wmrQ}(\wx,r)}
\end{equation} 
such that $w_{\me}\in H^2(0,1)$, $\me\in \mE_{\wmrQ}(\wx,r)$, the functions are continuous on $\wmG(\wx,r)$, and
\begin{equation}\label{eq:PhirwPhirz}
\begin{split}
    \sum\limits_{\me\in \mE_{\mv}(\wmrQ)}\frac{1}{\wL^{\wx}_{\me}}w_{\me}'(\mv)&=\Phi z,\quad \mv\in \mB_{\wmrQ}(\wx,r-1),\\
    \sum\limits_{\me\in \mE_{\mv}(\wmrQ)\cap \mE_{\wmrQ}(\wx,r)}\frac{1}{\wL^{\wx}_{\me}}w_{\me}(\mv)&=0,\quad \mv\in \mB_{\wmrQ}(\wx,r)\setminus \mB_{\wmrQ}(\wx,r-1).
    \end{split}
\end{equation}

The proof of \cite[Prop.~A.1]{KS23EJP} implies that for any $\me\in \mE_{\wmrQ}(\wx,r)$, $w_{\me}$ has the form
\begin{equation}\label{eq:wNform}
w_{\me}(x)=
	\alpha_{\me}\cdot e^{-\gamma x}+\beta_{\me}\cdot e^{-\gamma(1- x)},\quad x\in [0,1],
\end{equation}
for suitable constants $\alpha_{\me}$ and $\beta_{\me}
$ and a sufficiently large $\gamma>0$. It is straightforward from the proof that we can choose $\gamma$ to be large enough depending on the bounds for the edge lengths of the $r$-ball $\wmG(\wx,r)$; by Lemma \ref{lem:rootmargin}(2), these lie in $[m''(\mrQ),M]$ with $m''(\mrQ)=\min\{\rho(\mrQ)/2,\,m\}$, so $\gamma$ can be chosen depending only on $m$, $M$ and $\rho(\mrQ)$.

Then for a suitable constant $K(m,M,\rho(\mrQ))>0$, depending only on $m$, $M$ and $\rho(\mrQ)$,
we have
\begin{equation}\label{eq:wrnormestimate}
 \sum_{\me\in\mE_{\wmrQ}(\wx,r)}\|w_{\me}\|^2_{H^2(0,1)}\leq K(m,M,\rho(\mrQ))\cdot \|\Phi z\|^2_{\real^{q}}
\end{equation}
holds. \\

Notice that, since $\Psi_{\mrQ}z=z_{\mrQ}\in\mcD(A_{\mrQ})$, we have
\begin{equation}
   \sum_{\me\in \mE_{\mv}(\mrQ)}\frac{1}{L^{x_0}_{\me}}z_{\mrQ,\me}'(\mv)=0,\quad \text{ for all }\mv\in \mB_{\mrQ}(x_{0},r-1).
\end{equation}
Using this, \eqref{eq:QstarDmMK}, \eqref{eq:LedistLemma}, \eqref{eq:Phiz}, Lemma \ref{lem:rootmargin}(2) and Sobolev embedding, we obtain that there exists $k=k(D) > 0$ depending on $D$ (and independent of $r$) such that
\begin{equation}\label{eq:PhiNrzkicsi}
    \|\Phi z\|^2_{\real^{q}}= \sum_{\mv\in \mB_{\wmrQ}(\wx,r-1)}\left|\sum_{\me\in \mE_{\mv}(\wmrQ)}\left(\frac{1}{\wL^{\wx}_{\me}}-\frac{1}{L^{x_0}_{\phi(\me)}}\right)z_{\mrQ,\phi(\me)}'(\mv)\right|^2\leq \frac{k}{r^2}\cdot \frac{1}{m''(\mrQ)^4}\cdot \|z\|^2_{\mcH^2},
\end{equation}
where $z_{\mrQ,\phi(\me)}'$ is evaluated at the endpoint of $\phi(\me)$ corresponding to $\mv$ under the isomorphism $\phi$,
thus, by \eqref{eq:wrnormestimate}, there exists a constant $K'(D,m,M,\rho(\mrQ))$ (depending only on $D$, $m$, $M$ and $\rho(\mrQ)$) such that
\begin{equation}\label{eq:wrnormestimatebyKgamma}
  \sum_{\me\in\mE_{\wmrQ}(\wx,r)}\|w_{\me}\|^2_{H^2(0,1)}\leq \frac{K'(D,m,M,\rho(\mrQ))}{r^2}\cdot \|z\|^2_{\mcH^2}.
\end{equation}
\\

For the definition of $\wz$ we are going to need an interpolation on those edges of the $r$-ball in $\wmrQ$ which do not belong to the $(r-1)$-ball; such edges exist, since by Lemma \ref{lem:rootmargin}(1) the greatest distance from the root in $\wmrQ$ is at least $r$ as well.
Let $\me\in \mE_{\wmrQ}(\wx,r)\setminus \mE_{\wmrQ}(\wx,r-1)$ be given. First assume that one of the endpoints of $\me$ belongs to $\mB_{\wmrQ}(\wx,r-1)$; we may assume that $o(\me)\in \mB_{\wmrQ}(\wx,r-1)$ (here we make use of having actually oriented bonds, cf.~Definition \ref{defi:quantumgraph}). Using \eqref{eq:fQhatfQ} from Remark \ref{rem:psiQcompatible}, we define
\begin{equation}
    u_{\me}(x)\coloneqq \begin{cases}
        z_{\wmrQ,\me}(x)-w_{\me}(x), &\text{ if }x\in[0,\frac{1}{2}],\\
        p_{\me}(x), &\text{ if }x\in[\frac{1}{2},1],
    \end{cases}
\end{equation}
where $p_{\me}(x)$ is the third-degree Hermite interpolation polynomial with the data points
\begin{equation}\label{eq:Hermitedatapre}
\begin{split}
   p_{\me}\left(\frac{1}{2}\right)&=(z_{\wmrQ,\me}-w_{\me})\left(\frac{1}{2}\right), \quad p_{\me}'\left(\frac{1}{2}\right)=(z_{\wmrQ,\me}-w_{\me})'\left(\frac{1}{2}\right),\\
   p_{\me}(1)&=0,\quad 
   p'_{\me}(1)=0,
\end{split}
\end{equation}
that is
\begin{equation}\label{eq:Hermiteinterpolpolzminusw}
    p_{\me}(x)=p_1(x)\cdot (z_{\wmrQ,\me}-w_{\me})\left(\frac{1}{2}\right)+p_2(x)\cdot (z_{\wmrQ,\me}-w_{\me})'\left(\frac{1}{2}\right)
\end{equation}
with given basis polynomials $p_1,p_2$ of third degree.

If both endpoints of $\me$ belong to $\mB_{\wmrQ}(\wx,r)\setminus \mB_{\wmrQ}(\wx,r-1)$, we define
\begin{equation}\label{eq:uezero}
    u_{\me}\equiv 0.
\end{equation}
\\

Instead of defining $\wz\in\mcH$ directly, we first define $\Psi_{\wmrQ}\wz=\wz_{\wmrQ}\in\mcD(A_{\wmrQ})$ in the following way. Let
\begin{equation}\label{eq:zNrdefi}
    \wz_{\wmrQ,\me}\coloneqq \begin{cases}
        z_{\wmrQ,\me}-w_{\me}, & \me\in\mE_{\wmrQ}(\wx,r-1),\\
        u_{\me}, & \me\in \mE_{\wmrQ}(\wx,r)\setminus \mE_{\wmrQ}(\wx,r-1),\\
        0,& \me\notin \mE_{\wmrQ}(\wx,r).
    \end{cases}
\end{equation}
By a similar argument as in the proof of Theorem \ref{thm:heatsgrconv}, we have that $\wz_{\wmrQ,\me}\in H^2(0,1)$ holds for each $\me\in\mE_{\wmrQ}(\wx,r)$; since only finitely many coordinates of $\wz$ are nonzero, this implies $\wz_{\wmrQ}\in\mcH^2_{\wmrQ}$.

By its construction, $\wz_{\wmrQ}$ satisfies the continuity condition \eqref{eq:wcontcond} in the vertices of $\mB_{\wmrQ}(\wx,r-1)$. From \eqref{eq:PhirwPhirz} follows that the weighted Kirchhoff-Neumann conditions \eqref{eq:wKNcond} for the quantum graph $\wmrQ$ are satisfied in these vertices as well. Let now $\mv\in \mB_{\wmrQ}(\wx,r)\setminus \mB_{\wmrQ}(\wx,r-1)$ be any vertex and $\me\in\mE_{\mv}(\wmrQ)\cap \mE_{\wmrQ}(\wx,r)$ an arbitrary edge of the $r$-ball incident to $\mv$; then $\me\in \mE_{\wmrQ}(\wx,r)\setminus \mE_{\wmrQ}(\wx,r-1)$ and $\wz_{\wmrQ,\me}=u_{\me}$ holds. If $u_{\me}=0$, cf.~\eqref{eq:uezero}, then trivially $\wz_{\wmrQ,\me}(\mv)=\wz_{\wmrQ,\me}'(\mv)=0$. Otherwise $\mv=t(\me)$ holds, and by \eqref{eq:Hermitedatapre} we have
\begin{equation}\label{eq:boundaryverticeszero}
    \wz_{\wmrQ,\me}(\mv)=u_{\me}(1)=p_{\me}(1)=0,\quad  \wz_{\wmrQ,\me}'(\mv)=u_{\me}'(1)=p_{\me}'(1)=0.
\end{equation}
On the edges incident to $\mv$ not belonging to $\mE_{\wmrQ}(\wx,r)$, as well as on the edges incident to vertices outside $\mB_{\wmrQ}(\wx,r)$, the corresponding coordinates of $\wz_{\wmrQ}$ vanish identically. Hence the continuity and weighted Kirchhoff-Neumann conditions are trivially satisfied in all vertices of $\wmrQ$ outside $\mB_{\wmrQ}(\wx,r-1)$, and thus $\wz_{\wmrQ}\in\mcD(A_{\wmrQ})$ holds.  \\

Next we estimate the difference of $z_{\wmrQ}$ and $\wz_{\wmrQ}$ on an arbitrary edge \[\me\in \mE_{\wmrQ}(\wx,r)\setminus \mE_{\wmrQ}(\wx,r-1).\]
If $u_{\me}=0$, cf.~\eqref{eq:uezero}, then
\[\|z_{\wmrQ,\me}-\wz_{\wmrQ,\me}\|^2_{H^2(0,1)}=\|z_{\mrQ,\phi(\me)}\|^2_{H^2(0,1)}.\]
 Assume again $o(\me)\in \mB_{\wmrQ}(\wx,r-1)$.
By \eqref{eq:Hermiteinterpolpolzminusw} and Sobolev embedding there exists a constant $C>0$, depending only on $p_1$ and $p_2$, and a constant $k>0$, such that
\begin{equation}
\begin{split}
   \|z_{\wmrQ,\me}-\wz_{\wmrQ,\me}\|^2_{H^2(0,1)}&\leq 2\cdot \left(\|w_{\me}\|^2_{H^2(0,1)}+ \|z_{\mrQ,\phi(\me)}\|^2_{H^2(0,1)}+\|p_{\me}\|^2_{H^2(0,1)}\right)\\
   &\leq 2\cdot C\cdot \left(\|w_{\me}\|^2_{H^2(0,1)}+ \|z_{\mrQ,\phi(\me)}\|^2_{H^2(0,1)}+ \| z_{\mrQ,\phi(\me)}-w_{\me}\|^2_{C^1[0,1]}\right)
   \\
   &\leq 2\cdot C\cdot k\cdot \left(\|w_{\me}\|^2_{H^2(0,1)}+ \|z_{\mrQ,\phi(\me)}\|^2_{H^2(0,1)}+ \| z_{\mrQ,\phi(\me)}-w_{\me}\|^2_{H^2(0,1)}\right).
\end{split}
\end{equation}
Thus, for $\me\in \mE_{\wmrQ}(\wx,r)\setminus \mE_{\wmrQ}(\wx,r-1)$ we have
\begin{equation}\label{eq:zminuszrnormonErminusErminus1}
   \|z_{\wmrQ,\me}-\wz_{\wmrQ,\me}\|^2_{H^2(0,1)}
   \leq 8\cdot C\cdot k\cdot  \left(\|w_{\me}\|^2_{H^2(0,1)}+\| z_{\mrQ,\phi(\me)}\|^2_{H^2(0,1)}\right).
\end{equation}
Let now
\begin{equation}\label{eq:zhatdefi}
    \wz\coloneqq \Psi_{\wmrQ}^{-1}\wz_{\wmrQ}\in\mcD(A_{\wmrQ,\mcH}).
\end{equation}
Using \eqref{eq:zNrdefi}, \eqref{eq:zminuszrnormonErminusErminus1}, and assuming $8\cdot C\cdot k\geq 1$, we have that there exists $c>0$ such that 
\begin{equation}
    \begin{split}
     \|z-\wz\|^2_{\mcH^2}&=\sum_{\me\in\mE_{\wmrQ}(\wx,r)}\|z_{\wmrQ,\me}-\wz_{\wmrQ,\me}\|^2_{H^2(0,1)}+\sum_{\me\notin\mE_{\wmrQ}(\wx,r)}\|z_{\wmrQ,\me}\|^2_{H^2(0,1)}+\sum_{j>|\mE_{\wmrQ}|}\|z_{j}\|^2_{H^2(0,1)}\\
     &\leq \sum_{\me\in\mE_{\wmrQ}(\wx,r-1)}\|w_{\me}\|^2_{H^2(0,1)}+\sum_{\me\in\mE_{\wmrQ}(\wx,r)\setminus \mE_{\wmrQ}(\wx,r-1)}\|z_{\wmrQ,\me}-\wz_{\wmrQ,\me}\|^2_{H^2(0,1)}\\
     &+\sum_{\me\notin\mE_{\wmrQ}(\wx,r)}\|z_{\wmrQ,\me}\|^2_{H^2(0,1)}+\sum_{j>|\mE_{\wmrQ}|}\|z_{j}\|^2_{H^2(0,1)}\\
     &\leq c\cdot \left(\sum_{\me\in\mE_{\wmrQ}(\wx,r)}\|w_{\me}\|^2_{H^2(0,1)}+\sum_{j\geq s_{r-1}}\|z_{j}\|^2_{H^2(0,1)}\right).
    \end{split}
\end{equation}
Using \eqref{eq:wrnormestimatebyKgamma} we obtain that for an appropriate positive constant $K$, depending only on $D$, $m$, $M$ and $\rho(\mrQ)$,
\begin{equation}
   \|z-\wz\|^2_{\mcH^2}\leq K\cdot \left(\frac{1}{r^2}\cdot  \|z\|^2_{\mcH^2}+\sum_{j\geq s_{r-1}}\|z_{j}\|^2_{H^2(0,1)}\right),
\end{equation}
which is exactly the claim \eqref{eq:lemmaclaim}. \\

Let now $\mrQ$ be finite with greatest distance from the root less than $r$ and $z\in\mcD$ be given. We can proceed as in the infinite case, except for that by Lemma \ref{lem:rootmargin}(1), $\wmrQ$ is also finite with greatest root-distance less than $r$, and the set $\mE_{\wmrQ}(\wx,r)\setminus \mE_{\wmrQ}(\wx,r-1)$ is empty. Thus we can omit the functions $u_{\me}$ in \eqref{eq:zNrdefi}, and define
\begin{equation}
    \wz_{\wmrQ,\me}\coloneqq \begin{cases}
        z_{\wmrQ,\me}-w_{\me}, & \me\in\mE_{\wmrQ}(\wx,r),\\
        0,& \me\notin \mE_{\wmrQ}(\wx,r).
    \end{cases}
\end{equation}
Instead of \eqref{eq:zhatdefi}, we define
\begin{equation}
    \wz\coloneqq\begin{cases}
        (\Psi_{\wmrQ}^{-1}\wz_{\wmrQ})_j,& 1\leq j\leq |\mE_{\wmrQ}|,\\
        z_j,& j>|\mE_{\wmrQ}|.
    \end{cases}
\end{equation}
Then $\wz\in\mcD(A_{\wmrQ,\mcH})$ holds and by \eqref{eq:wrnormestimatebyKgamma} there exists $\hat{K}>0$, depending only on $D$, $m$, $M$ and $\rho(\mrQ)$ such that
\begin{equation}
    \|z-\wz\|_{\mcH^2}\leq \hat{K}\cdot \frac{1}{r}\cdot  \|z\|_{\mcH^2},
\end{equation}
which is exactly \eqref{eq:lemmaclaimfinite}.
\end{proof}

\vspace{0.5cm}

For any pair of $f,g\in \mcH$ and $t>0$, define the function
\begin{equation}\label{eq:Ffgr}
F_{f,g,t}(\mrQ)\coloneqq \langle g_{\mrQ},S_{\mrQ}(t)f_{\mrQ}\rangle_{\mcH_{\mrQ}} =\langle \Psi_{\mrQ} g,S_{\mrQ}(t)\Psi_{\mrQ} f\rangle_{\mcH_{\mrQ}},
\end{equation}
where  $\langle \cdot ,\cdot \rangle_{\mcH_{\mrQ}}$ denotes the usual scalar product on the Hilbert space $\mcH_{\mrQ}$, cf.~\eqref{eq:mcHtilde}.

\begin{rem}
Note that, if $\mrQ =(\mcQ, x_0)$ and $\wmrQ=(\wmcQ, \wx)$ are equivalent, then the operators $A_{\mrQ}$ and $A_{\wmrQ}$, associated with $\mcQ^{x_0}$ and $\wmcQ^{\wx}$  are unitarily equivalent, hence the semigroups $(S_{\mrQ}(t))_{t\geq 0}$ and $(S_{\wmrQ}(t))_{t\geq 0}$  generated by them are unitarily equivalent as well. 
Thus, by \eqref{eq:fQhatfQ}, we obtain that the function $F_{f,g,t}$ can be considered as a function on the equivalence classes $[\mrQ]=[\mcQ,x_0]$. 
\end{rem}

We now claim the boundedness and continuity of the functions $F_{f,g,t}$ on $\mbKs$.

\begin{theo}\label{prop:Ffgtrcontbdd}
For any $f,g\in \mcH$ and $t>0$, let the function $\mrQ\mapsto F_{f,g,t}(\mrQ)$ be defined in \eqref{eq:Ffgr}. Then for any $D\in\nat$, $0<m\leq M$, $F_{f,g,t}$ is bounded and continuous on $\mbKs$. 
\end{theo}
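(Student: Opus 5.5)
Boundedness will come directly from the bounds already established. Since $(S_{\mrQ}(t))_{t\geq 0}$ is contractive on $\mcH_{\mrQ}$ and $\|\Psi_{\mrQ}\|\leq\sqrt{M}$ by Remark \ref{rem:HqL2norm}, we obtain
\[
|F_{f,g,t}(\mrQ)|\leq \|\Psi_{\mrQ}g\|_{\mcH_{\mrQ}}\|\Psi_{\mrQ}f\|_{\mcH_{\mrQ}}\leq M\|f\|_{\mcH}\|g\|_{\mcH}
\]
uniformly on $\mbKs$. This step does not involve the root margin at all.

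For continuity, fix $[\mrQ]\in\mbKs$ and a sequence $[\wmrQ_n]\to[\mrQ]$ in $d$. We choose integers $r_n\to\infty$ with $d(\mrQ,\wmrQ_n)<1/(1+r_n)$ and $r_n\geq\max\{1,2/\rho(\mrQ)\}$. The plan is to rewrite $F$ on the common space. Since $\Psi_{\mrQ}$ is unitary up to the weights $L_{\me}$, we set $\langle g,h\rangle_{\mrQ}\coloneqq\langle \Psi_{\mrQ} g,\Psi_{\mrQ} h\rangle_{\mcH_{\mrQ}}$, so that $F_{f,g,t}(\mrQ)=\langle g,S_{\mrQ,\mcH}(t)f\rangle_{\mrQ}$. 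These weighted inner products differ only by the edge weights $L^{x_0}_{\me}$ versus $\wL^{\wx}_{\phi(\me)}$. On the first $s_{r_n}$ coordinates these weights agree up to $1/r_n$, by Remark \ref{rem:psiQcompatible} and \eqref{eq:LedistLemma}; the remaining coordinates contribute at most $M\sum_{j>s_{r_n}}\|f_j\|\|g_j\|\to 0$. The difference of the weighted inner products therefore tends to zero uniformly for vectors in bounded sets. By Lemma \ref{lem:rootmargin}(2), $\|S_{\wmrQ_n,\mcH}(t)\|\leq\sqrt{M/m''(\mrQ)}$ uniformly in $n$. It thus suffices to prove the strong convergence $S_{\wmrQ_n,\mcH}(t)f\to S_{\mrQ,\mcH}(t)f$ in $\mcH$.

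For the strong convergence, I will use the Trotter--Kato theorem in the form of \cite[Prop.~3.1]{ItoKappel98}, or equivalently \cite[Thm.~III.4.8]{EN00}, with identity projections. This needs uniform exponential bounds, which Lemma \ref{lem:rootmargin}(2) supplies. It also needs a core $\mcD$ for the limit generator $A_{\mrQ,\mcH}$, which the definition following \eqref{eq:domAQonmcH} supplies, and $\lambda$ with $(\lambda-A_{\mrQ,\mcH})\mcD$ dense. The key requirement is that for every $z\in\mcD$ there exist $\wz_n\in\mcD(A_{\wmrQ_n,\mcH})$ with $\wz_n\to z$ and $A_{\wmrQ_n,\mcH}\wz_n\to A_{\mrQ,\mcH}z$. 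Proposition \ref{lem:DAQDAQprimeonball} is built precisely for this. It gives $\wz_n$ with $\|z-\wz_n\|_{\mcH^2}\to 0$: in case (1), the tail $\sum_{j\geq s_{r_n-1}}\|z_j\|^2_{H^2}\to 0$ because $s_{r_n-1}\to\infty$; in case (2), the bound is $O(1/r_n)$. The constant $K$ depends only on $D,m,M,\rho(\mrQ)$, so it is uniform in $n$.

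The main obstacle is passing from $\mcH^2$-closeness of $z$ and $\wz_n$ to closeness of the generators. Write
\[
A_{\wmrQ_n,\mcH}\wz_n-A_{\mrQ,\mcH}z=A_{\wmrQ_n,\mcH}(\wz_n-z)+(A_{\wmrQ_n,\mcH}-A_{\mrQ,\mcH})z,
\]
where the middle term is interpreted coordinatewise on $\mcH^2$. The first term is bounded by $C(m,M,\rho(\mrQ))\|\wz_n-z\|_{\mcH^2}$, because on each coordinate the operator is $L^{-2}\partial_x^2-W$ with lengths bounded below by $m''(\mrQ)$ and $\|W\|_\infty\leq M$. For the second term, on coordinates $j\leq s_{r_n}$ the coefficients $1/L^2$ and $W$ differ by $O(1/r_n)$, since the $r_n$-balls match via $\phi$ and \eqref{eq:LedistLemma} holds. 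The term is therefore $O(1/r_n)\|z\|_{\mcH^2}$ there, plus a tail bounded by $C\sum_{j>s_{r_n}}\|z_j\|^2_{H^2}\to 0$. In the finite case, the coordinates $j>|\mE|$ vanish by \eqref{eq:AQonmcHf}, and by Lemma \ref{lem:rootmargin}(1) the edge counts eventually coincide. The remaining care is to verify that the Trotter--Kato hypotheses apply with the generator closures in the finite case; this holds since $\mcD$ is a core there. Together with the inner-product comparison above, this yields $F_{f,g,t}(\wmrQ_n)\to F_{f,g,t}(\mrQ)$.
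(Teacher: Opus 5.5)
Your proposal is correct and takes essentially the paper's route. Boundedness uses the same bound $M\|f\|_{\mcH}\|g\|_{\mcH}$. Continuity uses the uniform bound from Lemma~\ref{lem:rootmargin}(2), a split into an edge-weight mismatch on the $r$-ball plus a tail, and the First Trotter--Kato theorem with approximants from Proposition~\ref{lem:DAQDAQprimeonball}. Your explicit estimate of the coefficient mismatch $(A_{\wmrQ_n,\mcH}-A_{\mrQ,\mcH})z$ fills in a step the paper only asserts (``the above two convergences hold true if $\|z-z_{N_r}\|_{\mcH^2}\to 0$'').

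One small correction: the comparison of the weighted inner products is not uniform over pairs of bounded vectors. It is uniform only for a fixed first argument $g$ and a bounded second argument. So the tail should be controlled, as in the paper, by $M\bigl(\sum_{j>s_{r_n}}\|g_j\|^2_{L^2(0,1)}\bigr)^{1/2}\sup_n\|S_{\wmrQ_n,\mcH}(t)f\|_{\mcH}$.
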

\begin{proof}
Let $D\in\nat$, $0<m\leq M$, $f,g\in\mcH$ and $t>0$ be fixed throughout the proof. Then for any rooted quantum graph $\mrQ=(\mcQ,x_0)\in \mbKs$, by the contractivity of $S_{\mrQ}(t)$ on $\mcH_{\mrQ}$ and \eqref{eq:mcHmchQnormequiv} we have
\begin{equation}
\begin{split}    
\left|F_{f,g,t}(\mrQ)\right|&= \left|\langle g_{\mrQ},S_{\mrQ}(t)f_{\mrQ}\rangle_{\mcH_{\mrQ}}\right|\leq \|f_{\mrQ}\|_{\mcH_{\mrQ}}\cdot\|g_{\mrQ}\|_{\mcH_{\mrQ}} \\ &\leq  M\cdot \|f\|_{\mcH}\cdot  \|g\|_{\mcH}
\end{split}
\end{equation}
holds.
Hence, $F_{f,g,t}$ is bounded on $\mbKs$.

Let \[[\mrQ_{N}]=[\mcQ_N,x_N],\; N\in\nat,\quad [\mrQ]= [\mcQ,x_0]\] and $[\mrQ_N]\to [\mrQ]$ be a convergent sequence in $\mbKs$, with respect to the metric $d$. We have to verify that
\begin{equation}\label{eq:FfgNtoFfg}
\lim_{N\to\infty}F_{f,g,t}(\mrQ_N)=\lim_{N\to\infty}\langle g_{\mrQ_N},S_{\mrQ_N}(t)f_{\mrQ_N}\rangle_{\mcH_{\mrQ_N}}=\langle g_{\mrQ},S_{\mrQ}(t)f_{\mrQ}\rangle_{\mcH_\mrQ}=F_{f,g,t}(\mrQ).
\end{equation}
Let first $\mrQ$ be infinite.  We will show that for any subsequence of $\left([\mrQ_N]\right)$, there exists a subsequence of it such that \eqref{eq:FfgNtoFfg} along this subsequence holds.
 
So as not to over-complicate the notations, assume that $\left([\mrQ_N]\right)$ is a subsequence of the original one. We will show that there exists a subsequence $\left([\mrQ_{N_r}]\right)_{r\in\nat}$ such that

\begin{equation}\label{eq:FfgNrtoFfg}
\lim_{r\to\infty}\langle g_{\mrQ_{N_r}},S_{\mrQ_{N_r}}(t)f_{\mrQ_{N_r}}\rangle_{\mcH_{\mrQ_{N_r}}}=\langle g_{\mrQ},S_{\mrQ}(t)f_{\mrQ}\rangle_{\mcH_\mrQ}.
\end{equation}
By Definition \ref{defi:metricrooted}, for each natural number $r\in\nat$ there exists $N_r\in\nat$, such that
\begin{equation}\label{eq:dQNrQN}
    d(\mrQ_{N_r},\mrQ)<\frac{1}{1+r},
\end{equation}
hence, the $r$-ball of the root in $\mrQ_{N_r}$, denoted by $\mG(x_{N_r},r)$, and the $r$-ball of the root in $\mrQ$, denoted by $\mG(x_0,r)$, are isomorphic via an isomorphism
 \begin{equation}\label{eq:isometry}
     \phi=\phi_r: \mG_{\mrQ_{N_r}}(x_{N_r}, r )\xrightarrow{\sim} \mG_{\mrQ}(x_0, r)    
 \end{equation}
 and for each $\me\in\mE_{\mrQ_{N_r}}(x_{N_r},r)$, the edge set of the $r$-ball in $\mrQ_{N_r}$, we have
\begin{align}
    \left|L^{x_{N_r}}_{\me}-L^{x_0}_{\phi(\me)}\right|&<\frac{1}{r},\label{eq:Ledist}\\
\sup_{t\in [0,1]}\left|W^{x_{N_r}}_{\me}(t)-W^{x_0}_{\phi(\me)}(t)\right|&<\frac{1}{r}.
\end{align}
Clearly, the sequence $(N_r)_{r\in\nat}$ can be chosen as strictly monotone increasing.

In what follows we only consider indices
\begin{equation}\label{eq:rrhorestriction}
    r\geq \max\left\{1,\,\frac{2}{\rho(\mrQ)}\right\};
\end{equation}
discarding finitely many members of the sequence does not affect the limit \eqref{eq:FfgNrtoFfg}. By Lemma \ref{lem:rootmargin}(2) and \eqref{eq:SQnormonmcH} we then have the bound
\begin{equation}\label{eq:SQNrHuniformbound}
    \|S_{\mrQ_{N_r},\mcH}(t)\|\leq \sqrt{\frac{M}{m''}},\qquad
    \|S_{\mrQ,\mcH}(t)\|\leq \sqrt{\frac{M}{m''}},\qquad t\geq 0,
\end{equation}
with $m''=m''(\mrQ)=\min\{\rho(\mrQ)/2,\,m\}$, uniformly in $r$ satisfying \eqref{eq:rrhorestriction}.

Let $\ve>0$ be fixed. We will show that  if $r$ is big enough then
\begin{equation}\label{eq:FfgNrminusFfg}
    \left|\langle g_{\mrQ_{N_r}},S_{\mrQ_{N_r}}(t)f_{\mrQ_{N_r}}\rangle_{\mcH_{\mrQ_{N_r}}}-\langle g_{\mrQ},S_{\mrQ}(t)f_{\mrQ}\rangle_{\mcH_\mrQ}\right|<\ve,
\end{equation}
and thus \eqref{eq:FfgNrtoFfg} holds.

If $f=0$ or $g=0$, then \eqref{eq:FfgNrminusFfg} holds trivially; hence we may assume $f\neq 0$, $g\neq 0$. Let $k_{\ve}$ be such that
\begin{equation}\label{eq:fjgjlessthanve}
    \sum_{j\geq k_{\ve}}\|g_j\|^2_{L^2(0,1)}<\frac{\ve^2\cdot m''}{16 M^3\cdot \|f\|^2_{\mcH}}.
\end{equation}\\

Since $\mrQ$ is infinite, $s_{r}=|\mE_{\mrQ}(x_0,r)|\to\infty$ as $r\to\infty$. Hence, we can choose $r$ satisfying \eqref{eq:rrhorestriction} such that
\begin{equation}\label{eq:rchoice}
    \frac{1}{r}<\frac{\ve\sqrt{m''}}{4\sqrt{M}\|f\|_{\mcH}\|g\|_{\mcH}}\text{ and }s_r:=|\mE_{\mrQ}(x_0,r)|>k_{\ve}.
\end{equation}

Since by the existence of the isomorphism in \eqref{eq:isometry}  the number of the edges in $\mE_{\mrQ_{N_r}}(x_{N_r},r)$ is the same as in $\mE_{\mrQ}(x_0,r)$, by \eqref{eq:rchoice} we have
\[s_r=|\mE_{\mrQ}(x_0,r)|=|\mE_{\mrQ_{N_r}}(x_{N_r},r)|>k_{\ve}.\]

We estimate the left-hand-side in \eqref{eq:FfgNrminusFfg} in two terms as
\begin{equation}\label{eq:Ffgtwoterms}
\begin{split}
    &\left|\langle g_{\mrQ_{N_r}},S_{\mrQ_{N_r}}(t)f_{\mrQ_{N_r}}\rangle_{\mcH_{\mrQ_{N_r}}}-\langle g_{\mrQ},S_{\mrQ}(t)f_{\mrQ}\rangle_{\mcH_\mrQ}\right|\leq \\
    & \left|\sum_{\me \in \mE(x_{N_r},r)}\int_0^1\left((S_{\mrQ_{N_r}}(t)f_{\mrQ_{N_r}})_{\me}(x)g_{\mrQ_{N_r},\me}(x)L^{x_{N_r}}_{\me} -(S_{\mrQ}(t) f_{\mrQ})_{\phi(\me)}(x)\, g_{\mrQ,\phi(\me)}(x) \, L^{x_0}_{\phi(\me)}\right)\dx \right|  \\
    &+\left|\sum_{\me \notin \mE(x_{N_r},r)}\int_0^1(S_{\mrQ_{N_r}}(t)f_{\mrQ_{N_r}})_{\me}(x)g_{\mrQ_{N_r},\me}(x)L^{x_{N_r}}_{\me}\dx -\sum_{\me \notin \mE(x_{0},r)}\int_0^1(S_{\mrQ}(t) f_{\mrQ})_{\me}(x)\, g_{\mrQ,\me}(x) \, L^{x_0}_{\me}\dx \right|
\end{split}
\end{equation}

For the second term, by \eqref{eq:PsiQnormestimate1}, \eqref{eq:SQNrHuniformbound}, \eqref{eq:fjgjlessthanve} and \eqref{eq:rchoice} and passing to the semigroups acting on $\mcH$, see Definition \ref{defi:SQtonmcH},  we have
\begin{equation}\label{eq:SQNrSQoutsideball}
    \begin{split}
        &\left|\sum_{\me \notin \mE(x_{N_r},r)}\int_0^1(S_{\mrQ_{N_r}}(t)f_{\mrQ_{N_r}})_{\me}(x)g_{\mrQ_{N_r},\me}(x)L^{x_{N_r}}_{\me}\dx -\sum_{\me \notin \mE(x_{0},r)}\int_0^1(S_{\mrQ}(t) f_{\mrQ})_{\me}(x)\, g_{\mrQ,\me}(x) \, L^{x_0}_{\me}\dx \right|\\
        &\leq M\cdot\left(\sum_{j\geq s_r}\left\|(S_{\mrQ_{N_r},\mcH}(t)f)_j\right\|_{L^2(0,1)} \cdot \left\|g_{j}\right\|_{L^2(0,1)}+\sum_{j\geq s_r}\left\|(S_{\mrQ,\mcH}(t)f)_j\right\|_{L^2(0,1)}\cdot\left\|g_{j}\right\|_{L^2(0,1)} \right)\\
    &\leq  M\cdot\left(\left\|S_{\mrQ_{N_r},\mcH}(t)f\right\|_{\mcH} +\left\|S_{\mrQ,\mcH}(t)f\right\|_{\mcH} \right)\cdot\left(\sum_{j\geq s_r}\left\|g_{j}\right\|^2_{L^2(0,1)}\right)^{1/2}\\
    &\leq \frac{2M \sqrt{M}}{\sqrt{m''}}\|f\|_{\mcH}\cdot\left(\sum_{j\geq s_r}\left\|g_{j}\right\|^2_{L^2(0,1)}\right)^{1/2}<\frac{\ve}{2}.
    \end{split}
\end{equation}

In the following we are going to estimate the first term in \eqref{eq:Ffgtwoterms}. Using \eqref{eq:SQNrHuniformbound}, \eqref{eq:Ledist}, \eqref{eq:fQhatfQ} and \eqref{eq:rchoice}, we obtain
\begin{align}
&\left|\sum_{\me \in \mE(x_{N_r},r)}\int_0^1\left((S_{\mrQ_{N_r}}(t)f_{\mrQ_{N_r}})_{\me}(x)g_{\mrQ_{N_r},\me}(x)L^{x_{N_r}}_{\me} -(S_{\mrQ}(t) f_{\mrQ})_{\phi(\me)}(x)\, g_{\mrQ,\phi(\me)} (x)\, L^{x_0}_{\phi(\me)}\right)\dx \right|\notag\\
&\leq\left| \sum_{\me \in \mE(x_{N_r},r)} \int_0^1
(S_{\mrQ_{N_r}}(t)f_{\mrQ_{N_r}})_{\me}(x)g_{\mrQ_{N_r},\me}(x)\left(L^{x_{N_r}}_{\me} - L^{x_0}_{\phi(\me)} \right)\, dx\right|\\
&+ \left| \sum_{\me \in \mE(x_{N_r},r)} \int_0^1 \left((S_{\mrQ_{N_r}}(t)f_{\mrQ_{N_r}})_{\me}(x)-(S_{\mrQ}(t)f_{\mrQ})_{\phi(\me)}(x)\right) \, g_{\mrQ, \phi(e)}(x)\, L^{x_0}_{\phi(e)} \, dx \right|\\
&\leq \max_{\me \in \mE(x_{N_r},r)}\left|L^{x_{N_r}}_{\me}-L^{x_0}_{\phi(\me)}\right|\cdot  \|S_{\mrQ_{N_r},\mcH}(t)f\|_{\mcH}\cdot \|g\|_{\mcH}\\
&+ M\cdot 
\|S_{\mrQ_{N_r},\mcH}(t)f-S_{\mrQ,\mcH}(t)f\|_{\mcH}\cdot \|g\|_{\mcH}\\
&\leq\frac{1}{r}\cdot  \sqrt{\frac{M}{m''}}\cdot\|f\|_{\mcH}\cdot \|g\|_{\mcH}+ M\cdot \|S_{\mrQ_{N_r},\mcH}(t)f-S_{\mrQ,\mcH}(t)f\|_{\mcH}\cdot\|g\|_{\mcH}\label{eq:Ffgtestimate}\\
&<\frac{\ve}{4}+M\cdot\|S_{\mrQ_{N_r},\mcH}(t)f-S_{\mrQ,\mcH}(t)f\|_{\mcH}\cdot\|g\|_{\mcH}.
\end{align}
We are going to show that if $r>0$ big enough then for the second term
\begin{equation}
    M\cdot \|S_{\mrQ_{N_r},\mcH}(t)f-S_{\mrQ,\mcH}(t)f\|_{\mcH}\cdot\|g\|_{\mcH}<\frac{\ve}{4}
\end{equation}
holds, that is,
\begin{equation}\label{eq:SQNrH-SQrHsmall}
    \|S_{\mrQ_{N_r},\mcH}(t)f-S_{\mrQ,\mcH}(t)f\|_{\mcH}<\frac{\ve}{4M\|g\|_{\mcH}}.
\end{equation}
 To do so, we prove that for any $f\in\mcH$,
 \begin{equation}
    \lim_{r\to\infty}\left\|S_{\mrQ_{N_r},\mcH}(t)f-S_{\mrQ,\mcH}(t)f\right\|_{\mcH}= 0
 \end{equation} 
is true, hence, for $r>0$ big enough, \eqref{eq:SQNrH-SQrHsmall} holds.

We are going to use the First Trotter--Kato Theorem from \cite[Thm.~III.4.8]{EN00}, applied to the sequence of semigroups $\left(S_{\mrQ_{N_r},\mcH}(t)\right)_{t\geq 0}$ indexed by the $r$ satisfying \eqref{eq:rrhorestriction}. The required uniform norm bound is provided by \eqref{eq:SQNrHuniformbound}. Hence, according to \cite[Thm.~III.4.8]{EN00}, we only have to show that for any $z\in\mcD$ -- which is a core for the generator $A_{\mrQ,\mcH}$ -- there exists a sequence $(z_{N_r})$ such that
\begin{equation}   z_{N_r}\in\mcD(A_{\mrQ_{N_r},\mcH}),r\in\nat,\; \lim_{r\to\infty }z_{N_r}= z\text{ and }\lim_{r\to\infty }A_{\mrQ_{N_r},\mcH} z_{N_r}= A_{\mrQ,\mcH}z\text{ in }\mcH
\end{equation}
are satisfied. By \eqref{eq:opAQ} and \eqref{eq:QstarDmMK}, the above two convergences hold true if
\begin{equation}\label{eq:zNRtoz}
    \lim_{r\to\infty}\|z-z_{N_r}\|_{\mcH^2}= 0.
\end{equation}\\

Let $z\in\mcD$ and $r$ satisfying \eqref{eq:rrhorestriction} be given. Recall that by \eqref{eq:dQNrQN}, $d(\mrQ_{N_r},\mrQ)<\frac{1}{1+r}$ holds.
Since $\mrQ$ is assumed to be infinite and $r\geq 2/\rho(\mrQ)$, we may apply Proposition \ref{lem:DAQDAQprimeonball}(1) with $\wmrQ=\mrQ_{N_r}$ and $\phi=\phi_r$, and obtain that there exists
\begin{equation}
z_{N_r}\coloneqq\wz\in\mcD(A_{\mrQ_{N_r},\mcH})
\end{equation}
such that \eqref{eq:lemmaclaim} holds, hence, with a constant $K>0$ depending only on $D$, $m$, $M$ and $\rho(\mrQ)$,
\begin{equation}
        \|z-z_{N_r}\|_{\mcH^2}^2 \leq K\cdot \left(\frac{1}{r^2}\cdot  \|z\|_{\mcH^2}^2+ \sum_{j\geq s_{r-1}}\|z_j\|^2_{H^2(0,1)}\right).
\end{equation}
Since $z\in\mcH^2$ and, $\mrQ$ being infinite, $s_{r-1}=|\mE_{\mrQ}(x_0,r-1)|\to\infty$ as $r\to\infty$, the right-hand side tends to $0$ as $r\to\infty$. This implies the claim \eqref{eq:zNRtoz}.\\

If $\mrQ$ is finite, we proceed in an analogous way as in the infinite case. For any sequence $(\mrQ_N)$ from $\mbKs$ with $[\mrQ_N]\to [\mrQ]$ we choose a subsequence $(\mrQ_{N_r})$ such that
\begin{equation}
    d(\mrQ_{N_r},\mrQ)<\frac{1}{1+r},\quad r\in\nat
\end{equation}
is satisfied. As before, we only consider indices $r$ satisfying \eqref{eq:rrhorestriction}, so that the uniform norm bound \eqref{eq:SQNrHuniformbound} is true. Applying the triangle inequality as in \eqref{eq:Ffgtwoterms} and using the finiteness of $\mrQ$, we obtain that for $r$ big enough
\begin{equation}
\begin{split}
    &\left|\langle g_{\mrQ_{N_r}},S_{\mrQ_{N_r}}(t)f_{\mrQ_{N_r}}\rangle_{\mcH_{\mrQ_{N_r}}}-\langle g_{\mrQ},S_{\mrQ}(t)f_{\mrQ}\rangle_{\mcH_\mrQ}\right|\leq \\
    & \left|\sum_{\me \in \mE(x_{N_r},r)}\int_0^1\left((S_{\mrQ_{N_r}}(t)f_{\mrQ_{N_r}})_{\me}(x)g_{\mrQ_{N_r},\me}(x)L^{x_{N_r}}_{\me} -(S_{\mrQ}(t) f_{\mrQ})_{\phi(\me)}(x)\, g_{\mrQ,\phi(\me)}(x) \, L^{x_0}_{\phi(\me)}\right)\dx \right|  \\
    &+\left|\sum_{\me \notin \mE(x_{N_r},r)}\int_0^1(S_{\mrQ_{N_r}}(t)f_{\mrQ_{N_r}})_{\me}(x)g_{\mrQ_{N_r},\me}(x)L^{x_{N_r}}_{\me}\dx \right|.
\end{split}
\end{equation}
For the second term, note that if $r$ additionally exceeds the greatest root-distance in $\mrQ$, then by Lemma \ref{lem:rootmargin}(1) the isomorphism $\phi$ identifies the entire rooted graphs; in particular, $\mE(x_{N_r},r)$ exhausts the edge set of $\mrQ_{N_r}^{x_{N_r}}$, so the sum over $\me\notin\mE(x_{N_r},r)$ is void and the second term vanishes.

For the first term we can again use triangle inequality and obtain for the estimate in \eqref{eq:Ffgtestimate}
\begin{equation}
\begin{split}
     &\left|\sum_{\me \in \mE(x_{N_r},r)}\int_0^1\left((S_{\mrQ_{N_r}}(t)f_{\mrQ_{N_r}})_{\me}(x)g_{\mrQ_{N_r},\me}(x)L^{x_{N_r}}_{\me} -(S_{\mrQ}(t) f_{\mrQ})_{\phi(\me)}(x)\, g_{\mrQ,\phi(\me)}(x) \, L^{x_0}_{\phi(\me)}\right)\dx \right| \\
    &\leq\frac{1}{r}\cdot  \sqrt{\frac{M}{m''}}\cdot\|f\|_{\mcH}\cdot \|g\|_{\mcH}+ M\cdot \|S_{\mrQ_{N_r},\mcH}(t)f-S_{\mrQ,\mcH}(t)f\|_{\mcH}\cdot\|g\|_{\mcH}.
\end{split}
\end{equation}
 To prove that
\begin{equation}
   \lim_{r\to\infty}\|S_{\mrQ_{N_r},\mcH}(t)f-S_{\mrQ,\mcH}(t)f\|_{\mcH} =0
\end{equation}
holds, one can use again the First Trotter--Kato Theorem, with the uniform norm bound from \eqref{eq:SQNrHuniformbound}. Since $\mrQ$ is finite, for $r$ large enough -- namely, $r$ satisfying \eqref{eq:rrhorestriction} and exceeding the greatest root-distance in $\mrQ$ -- Proposition \ref{lem:DAQDAQprimeonball}(2) applies with $\wmrQ=\mrQ_{N_r}$. Hence, we obtain that for any $z\in\mcD$, which is a core for the generator $A_{\mrQ,\mcH}$, one can choose a function $z_{N_r}\in\mcD(A_{\mrQ_{N_r},\mcH})$ such that for some $K=K(D,m,M,\rho(\mrQ))$,
\begin{equation}
\|z-z_{N_r}\|_{\mcH^2}\leq \frac{K}{r}\cdot  \|z\|_{\mcH^2}. 
\end{equation}
\end{proof}

According to Remark \ref{rem:Qstarsepcomplete}, we can endow $\mbQs$ with the Borel $\sigma$-algebra generated by the open sets with respect to the metric $d$. Let $\mcP(\mbQs)$ be the set of probability measures on $\mbQs$. Using this, we are able to define the Benjamini--Schramm limit of quantum graphs, cf.~\cite[Def.~3.5]{AISW2021}

\begin{defi}
Any finite quantum graph $\mcQ =(\mG, W,\beta, U)$ defines a probability measure $\nu_{\mcQ}\in \mcP(\mbQs)$ obtained by choosing a root uniformly at random:
\begin{equation}\label{eq:Qmeasure} 
\nu_{\mcQ}\coloneqq\frac{1}{\mcL(\mcQ)}\int_{\mG}\delta_{[\mcQ,x]}\dx,
\end{equation}
where $\delta_{[\mcQ,x]}$ is the Dirac measure of the equivalence class $[\mcQ,x]$. (For the precise definition of the integral see \eqref{eq:integralbound}; in particular, in accordance with the remark after Definition \ref{defi:Qx0}, the integral averages over the two orientations of each edge.)
If $\left(\mcQ_N\right)$ is a sequence of finite quantum graphs, we say that $\bbP\in\mcP(\mbQs)$ is the \emph{local weak limit} of $\left(\mcQ_N\right)$, or that $\left(\mcQ_N\right)$ \emph{converges in the sense of Benjamini--Schramm} to $\bbP$, if $\left(\nu_{\mcQ_N}\right)$ converges weakly-* to $\bbP$, that is, for every bounded continuous function $F\colon \mbQs\to\real$ we have
\begin{equation}\label{eq:BSlimitbddcontfct}
\frac{1}{\mcL(\mcQ_N)}\int_{\mG_N}F\left([\mcQ_N,x]\right)\dx \to \int_{\mbQs}F\left([\mcQ,x]\right)\,\mathrm{d}\bbP([\mcQ,x]).
\end{equation}
\end{defi}

As in the above definition, for a rooted quantum graph $\mrQ=(\mcQ,x)$ it will be important to emphasize which point is its root. Hence, in what follows, we will write this out as $(\mcQ,x)$ instead of the shorter notations using $\mrQ$ introduced in Notation \ref{notat:mrQmcQx}.

\begin{cor}\label{cor:FBSsubseq}
Let $D\in\nat$, $0<m\leq M$ and let $(\mcQ_N)$ be a sequence of finite quantum graphs satisfying \eqref{eq:QstarDmMK} for all $N\in\nat$. Then there exists a subsequence $(\mcQ_{N_k})$ and $\bbP\in\mcP(\mbKs)$ such that for any $f,g\in\mcH$ and $t>0$,
\begin{equation}
\begin{split}
    \lim_{k\to\infty}&\frac{1}{\mcL(\mcQ_{N_k})}\int_{\mG_{N_k}}\langle g_{(\mcQ_{N_k},x)},S_{(\mcQ_{N_k},x)}(t)f_{(\mcQ_{N_k},x)}\rangle_{\mcH_{(\mcQ_{N_k},x)}} \dx\\
    &=\int_{\mbKs}  \langle g_{(\mcQ,x)},S_{(\mcQ,x)}(t)f_{(\mcQ,x)}\rangle_{\mcH_{(\mcQ,x)}}\,\mathrm{d}\bbP([\mcQ,x]).
\end{split}
\end{equation}
\end{cor}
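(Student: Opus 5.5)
The plan is to obtain the corollary from Theorem \ref{prop:Ffgtrcontbdd} combined with a compactness (tightness) argument for the root-sampling measures $\nu_{\mcQ_N}$. I will invoke \cite[Cor.~3.7]{AISW2021} as the introduction indicates. The main point is that $\mbKs$, with the metric $d$, behaves like a compact space, or at least the family $(\nu_{\mcQ_N})$ is tight in $\mcP(\mbKs)$. The degrees are bounded by $D$, so for each radius $k$ there are only finitely many combinatorial isomorphism types of labelled $k$-balls. The edge lengths lie in $[m,M]$, apart from the two root half-edges, which lie in $[0,M]$. The potentials form an equicontinuous, uniformly bounded family in $C([0,1])$, since $\|W_\me\|_\infty,\Lip(W_\me)\le M$. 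By Arzelà--Ascoli, each $k$-ball therefore ranges over a totally bounded set. This gives total boundedness of $\mbKs$ up to the issue of the root margin. Closedness of $\mbKs$ in the complete space $(\mbQs,d)$ is stated in the Remark before the definition of $\mcH$, so relative compactness and Prokhorov's theorem give a weakly-$*$ convergent subsequence $\nu_{\mcQ_{N_k}}\to\bbP$. Since each $\nu_{\mcQ_N}$ is supported on $\mbKs$ and $\mbKs$ is closed, the Portmanteau theorem yields $\bbP(\mbKs)=1$, that is, $\bbP\in\mcP(\mbKs)$.

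Next I use that the subsequence is independent of $f,g,t$. By Theorem \ref{prop:Ffgtrcontbdd}, for each fixed $f,g\in\mcH$ and $t>0$ the function $F_{f,g,t}$ is bounded and continuous on $\mbKs$. Here I extend it to $\mbQs$ in a Tietze fashion, or better, test weak convergence directly on the closed subspace $\mbKs$, where all measures live. Applying \eqref{eq:BSlimitbddcontfct} with $F=F_{f,g,t}$ then gives exactly the claimed identity. The left-hand side uses the definition \eqref{eq:Qmeasure} of $\nu_{\mcQ_{N_k}}$, and the integrand is well defined on equivalence classes by the Remark preceding Theorem \ref{prop:Ffgtrcontbdd}. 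No diagonal argument over $(f,g,t)$ is needed, because weak-$*$ convergence of measures holds simultaneously for all bounded continuous test functions.

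I expect the main obstacle to be the compactness step. The root margin $\rho$ can degenerate, so a root half-edge may have length tending to $0$, and the metric $d$ only records $|L_\me-\hat L_\me|$. In the limit the half-edge then has length $0$, which falls outside the admissible rooted quantum graphs, so $\mbKs$ need not be compact. My first attempt is to follow \cite[Cor.~3.7]{AISW2021}, where this tightness is established for bounded-geometry data. There, sampling the root uniformly by length means that the $\nu_{\mcQ_N}$-mass of roots within distance $\delta$ of a vertex is at most $2\delta/m$ per unit length. This bound is uniform in $N$, so for every $\eta$ one gets a compact set $\{\rho\ge\delta\}\cap\mbKs$ of mass at least $1-\eta$, and tightness follows. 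If this citation does not literally cover the Lipschitz potential constraint, I will add the Arzelà--Ascoli argument above on each $k$-ball, using a diagonal choice over $k$, to verify total boundedness of $\{\rho\ge\delta\}\cap\mbKs$.
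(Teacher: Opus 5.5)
Your proposal is correct and follows the paper's route. You extract a Benjamini--Schramm convergent subsequence of $(\nu_{\mcQ_N})$ by a compactness argument modelled on \cite[Lem.~3.6, Cor.~3.7]{AISW2021}, check that the limit $\bbP$ is carried by $\mbKs$, and then test the weak-$*$ convergence against the bounded continuous function $F_{f,g,t}$ from Theorem~\ref{prop:Ffgtrcontbdd}, with the subsequence independent of $f,g,t$.

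Your replacement of compactness by tightness is a worthwhile refinement: it uses the uniform bound $\nu_{\mcQ_N}(\{\rho<\delta\})\le 2\delta/m$ together with compactness of $\{\rho\ge\delta\}\cap\mbKs$, and it makes explicit the root-margin degeneration that the paper's bare citation leaves implicit. You should prove compactness of $\{\rho\ge\delta\}\cap\mbKs$ directly by your diagonal argument over radii, rather than through the stated completeness of $\mbQs$. Under the paper's definitions, roots sliding towards a vertex produce $d$-Cauchy sequences with no limit, so that completeness claim cannot be relied on.
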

\begin{proof}
By a straightforward modification of the proof of \cite[Lem.~3.6]{AISW2021}, combined with the analogous statement of \cite[Cor.~3.7]{AISW2021} for $\mbKs$, we obtain that there is a subsequence of $(\mcQ_{N_k})$ which is convergent in the sense of Benjamini--Schramm to a probability  measure $\bbP\in\mcP(\mbKs)$. Hence, Theorem \ref{prop:Ffgtrcontbdd} implies the claim.
\end{proof}

Combining this result and Theorem \ref{thm:heatsgrconv}, we obtain the following.

\begin{theo}\label{thm:QNdoublelimit}
    Let $D\in\nat$, $0<m\leq M$ and $(\mcQ_N)$ be a sequence of finite quantum graphs satisfying \eqref{eq:QstarDmMK}, and assume that $(\mcQ_N)$ converges  in the sense of Benjamini--Schramm to a probability measure $\bbP\in\mcP(\mbKs)$. Let $f,g\in\mcH$ and $t>0$ be given. Then
    \begin{align}
    \lim_{r\to\infty}\lim_{N\to\infty}\frac{1}{\mcL(\mcQ_{N})}&\int_{\mG_{N}}\langle g_{(\mcQ_{N},x)},J_{(\mcQ_N,x),r}S_{(\mcQ_N,x)_r}(t)P_{(\mcQ_N,x),r}f_{(\mcQ_{N},x)}\rangle_{\mcH_{(\mcQ_{N},x)}} \dx\notag\\
    =\lim_{N\to\infty}\lim_{r\to\infty}\frac{1}{\mcL(\mcQ_{N})}&\int_{\mG_{N}}\langle g_{(\mcQ_{N},x)},J_{(\mcQ_N,x),r}S_{(\mcQ_N,x)_r}(t)P_{(\mcQ_N,x),r}f_{(\mcQ_{N},x)}\rangle_{\mcH_{(\mcQ_{N},x)}} \dx\notag\\
    =&\int_{\mbKs}  \langle g_{(\mcQ,x)},S_{(\mcQ,x)}(t)f_{(\mcQ,x)}\rangle_{\mcH_{(\mcQ,x)}}\,\mathrm{d}\bbP([\mcQ,x])\label{eq:Kstarintegral}
\end{align}
where, for a given rooted quantum graph $(\mcQ,x)$ and $r\in\nat$, the rooted quantum graph $(\mcQ,x)_r$ is defined in \eqref{eq:Qrdef}, the operators $J_{(\mcQ,x),r}$ and $P_{(\mcQ,x),r}$ are from Definition \ref{defi:PrJr}.
\end{theo}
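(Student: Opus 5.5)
The plan is to handle the two iterated limits separately, each reducing to results already proved.

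\emph{Inner limit in $r$ first (second line of the statement).} Fix $N$. Since $\mcQ_N$ is finite, for every root $x\in\mG_N$ the ball $(\mcQ_N,x)_r$ coincides with $\mcQ_N^{x}$ once $r$ exceeds the diameter of $G_N^{x}$. This threshold is bounded uniformly in $x$ by $|\mV_N|+1$. For such $r$, $P_{(\mcQ_N,x),r}$ and $J_{(\mcQ_N,x),r}$ are identities and $S_{(\mcQ_N,x)_r}(t)=S_{(\mcQ_N,x)}(t)$. Hence the integrand is eventually constant in $r$, uniformly in $x$, and equals $F_{f,g,t}([\mcQ_N,x])$. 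So the $r$-limit is
\[
\frac{1}{\mcL(\mcQ_N)}\int_{\mG_N}F_{f,g,t}([\mcQ_N,x])\dx .
\]
Theorem \ref{prop:Ffgtrcontbdd} states that $F_{f,g,t}$ is bounded and continuous on $\mbKs$, and $\mbKs$ is closed in $\mbQs$. By Benjamini--Schramm convergence \eqref{eq:BSlimitbddcontfct}, the $N$-limit is the right-hand side of \eqref{eq:Kstarintegral}. One technical point: \eqref{eq:BSlimitbddcontfct} requires test functions on all of $\mbQs$. I would use Tietze extension from the closed set $\mbKs$, or simply note that all $\nu_{\mcQ_N}$ and $\bbP$ are supported on $\mbKs$.

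\emph{Inner limit in $N$ first (first line).} Fix $r$. Define
\[
F^r(\mrQ)\coloneqq\langle g_{\mrQ},J_{\mrQ,r}S_{\mrQ_r}(t)P_{\mrQ,r}f_{\mrQ}\rangle_{\mcH_{\mrQ}}.
\]
This depends only on the $r$-ball of $\mrQ$ and on $f,g$ restricted to the first $s_r$ coordinates, because $J_r$ extends by zero. I would show that $F^r$ is bounded and continuous on $\mbKs$ by repeating the proof of Theorem \ref{prop:Ffgtrcontbdd} for the finite graphs $\mrQ_r$.

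\begin{itemize}
\item Boundedness follows from contractivity of $S_{\mrQ_r}(t)$ and $\|\Psi_{\mrQ}\|\le\sqrt M$, giving $|F^r|\le M\|f\|\|g\|$.
\item For continuity, let $d(\mrQ_N,\mrQ)<1/(1+R)$ with $R$ large. Then the $r$-balls are isomorphic and so are their $(r+1)$-neighbourhoods. The rooted balls $(\mrQ_N)_r$ and $\mrQ_r$ are then close finite graphs with identical combinatorics, and the BFS numbering agrees on them by Remark \ref{rem:psiQcompatible}.
\item Only the lengths and potentials differ, by less than $1/R$. Continuity follows from the finite-graph case of the argument: Lemma \ref{lem:rootmargin}(1) and Proposition \ref{lem:DAQDAQprimeonball}(2), applied with the "graph" being the $r$-ball itself, give convergence of generators on a core, and Trotter--Kato \cite[Thm.~III.4.8]{EN00} then gives convergence of the semigroups.
\end{itemize}

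BS convergence then gives the $N$-limit
\[
\int_{\mbKs}F^r\,\mathrm{d}\bbP .
\]
Next, by Theorem \ref{thm:heatsgrconv} applied to each infinite $\mrQ$ (for finite $\mrQ$, $F^r=F_{f,g,t}$ for large $r$), we have $F^r(\mrQ)\to F_{f,g,t}(\mrQ)$ pointwise as $r\to\infty$. Here one transfers $\|J_rS_{\mcQ_r}(t)P_rf_\mrQ-S_\mrQ(t)f_\mrQ\|\to0$ to the pairing with $g_\mrQ$. Together with the uniform bound $M\|f\|\|g\|$, dominated convergence yields \eqref{eq:Kstarintegral}.

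The main obstacle is the continuity of $F^r$. The boundary vertices of the $r$-ball carry Kirchhoff conditions on a modified edge set, so Proposition \ref{lem:DAQDAQprimeonball}(2) must be checked to apply verbatim to the truncated graphs. In particular, the constant must be controlled via the root margin bound of Lemma \ref{lem:rootmargin}(2). I would first try to reduce to Theorem \ref{prop:Ffgtrcontbdd} directly, by observing that $[\mrQ]\mapsto[\mrQ_r]$ is continuous into $\mbKs$ for $R\ge r+1$ and that $F^r([\mrQ])=F_{P_rf,P_rg,t}([\mrQ_r])$ after the BFS identification.
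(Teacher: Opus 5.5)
Your proposal is correct and follows the paper's proof essentially step by step. For the $\lim_N\lim_r$ order you use eventual constancy in $r$ for finite graphs together with Theorem~\ref{prop:Ffgtrcontbdd}. For the $\lim_r\lim_N$ order you use continuity of the truncated functional $F^r$, Benjamini--Schramm convergence, and then Theorem~\ref{thm:heatsgrconv} with dominated convergence. As in the paper, the continuity of $F^r$ comes from rerunning the finite case with Proposition~\ref{lem:DAQDAQprimeonball}(2) on the truncated graphs, with $\phi$-compatible boundary labelings. Your extension of test functions from the closed set $\mbKs$ and your factorization of $F^r$ as $F_{P_rf,P_rg,t}$ composed with the continuous truncation map are sound refinements of points the paper leaves implicit.
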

\begin{proof}
       To verify that the first limit exists and equals the integral \eqref{eq:Kstarintegral}, first we show that for any $k\in\nat$,  the function
      \begin{equation}
          F_{f,g,t,k}(\mrQ)\coloneqq \langle g_{\mrQ},J_{\mrQ,k}S_{(\mrQ)_k}(t)P_{\mrQ,k}f_{\mrQ}\rangle_{\mcH_{\mrQ}}
      \end{equation} 
is bounded and continuous on $\mbKs$. (Here we again use the simpler notation $\mrQ$ for $(\mcQ,x)$ with a given root $x$; note that, compared with Section \ref{sec:approxheatsgr}, the roles of $k$ and $r$ are interchanged here.) The boundedness follows immediately.

Proceeding as in the proof of Theorem \ref{prop:Ffgtrcontbdd}, for the sequence $(\mrQ_N)$, $\mrQ_N=(\mcQ_N,x_N)$, $N\in\nat$ satisfying \eqref{eq:QstarDmMK} we choose an index sequence $(N_r)_{r\in\nat}$ such that
\begin{equation}
    d(\mrQ_{N_r},\mrQ)<\frac{1}{1+r},
\end{equation}
hence, the $r$-balls are isomorphic via an isomorphism
 \begin{equation}
     \phi=\phi_r: \mG_{\mrQ_{N_r}}(x_{N_r}, r )\xrightarrow{\sim} \mG_{\mrQ}(x, r)    
 \end{equation}
 and for each $\me\in\mE_{\mrQ_{N_r}}(x_{N_r},r)$, the edge set of the $r$-ball in $\mrQ_{N_r}$, we have
\begin{align}
    \left|L^{x_{N_r}}_{\me}-L^{x}_{\phi(\me)}\right|&<\frac{1}{r},\;
\sup_{t\in [0,1]}\left|W^{x_{N_r}}_{\me}(t)-W^{x}_{\phi(\me)}(t)\right|<\frac{1}{r}.
\end{align}
If $r>k$, the above holds also for the corresponding $k$-balls, and for any $f\in\mcH$, we have
\begin{equation}
    P_{\mrQ_{N_r},k}f_{\mrQ_{N_r}}=P_{\mrQ,k}f_{\mrQ}=:f_k
\end{equation}
being a function defined on (any of the isomorphic) $k$-balls $\mG_{\mrQ_{N_r}}(x_{N_r},k)$  and $\mG_{\mrQ}(x, k)$. 
Hence, restricting again to indices $r\geq\max\{1,\,2/\rho(\mrQ)\}$ and using the definition of $J_{\mrQ,k}$ together with Lemma \ref{lem:rootmargin}(2), one obtains, analogously to \eqref{eq:Ffgtestimate},
\begin{equation}
\begin{split}
&\left|F_{f,g,t,k}(\mrQ_{N_r})-F_{f,g,t,k}(\mrQ)\right|=
    \left|\langle g_{\mrQ_{N_r}},J_{\mrQ_{N_r},k}S_{(\mrQ_{N_r})_k}(t)f_{k}\rangle_{\mcH_{\mrQ_{N_r}}}-\langle g_{\mrQ},J_{\mrQ,k}S_{\mrQ_k}(t)f_{k}\rangle_{\mcH_\mrQ}\right|\\
    =
    & \left|\sum_{\me \in \mE(x_{N_r},k)}\int_0^1\left((S_{(\mrQ_{N_r})_k}(t)f_{k})_{\me}(x)g_{k,\me}(x)L^{x_{N_r}}_{\me} -(S_{\mrQ_k}(t) f_{k})_{\phi(\me)}(x)\, g_{k,\phi(\me)}(x) \, L^{x}_{\phi(\me)}\right)\dx \right| \\
    &\leq\frac{1}{r}\cdot  \sqrt{\frac{M}{m''(\mrQ)}}\cdot\|f\|_{\mcH}\cdot \|g\|_{\mcH}+ M\cdot \|S_{(\mrQ_{N_r})_k}(t)f_k-S_{\mrQ_k}(t)f_k\|_{\mcH_k}\cdot\|g\|_{\mcH},
\end{split}
\end{equation}
with $\mcH_k=L^2(0,1)^{s_k}$, where $s_k$ is the number of the edges in $\mE_{\mrQ_{N_r}}(x_{N_r},k)$ which is the same as in $\mE_{\mrQ}(x,k)$.  The proof can be now finished as the proof of Theorem \ref{prop:Ffgtrcontbdd} for the case of finite $\mrQ$, using Proposition \ref{lem:DAQDAQprimeonball}(2). Here the proposition is applied to the truncated graphs $(\mrQ)_k$ and $(\mrQ_{N_r})_k$: these are rooted quantum graphs in $\mbKs$ with the same root margin $\rho(\mrQ)$, and the labelings of the boundary vertices in \eqref{eq:betavr} may be chosen $\phi$-compatibly, so that $d\big((\mrQ)_k,(\mrQ_{N_r})_k\big)<1/(1+r)$ holds.

 Thus, for any sequence of quantum graphs $(\mcQ_N)$ satisfying \eqref{eq:QstarDmMK}, which is convergent in the sense of Benjamini--Schramm to a probability measure $\bbP\in\mcP(\mbKs),$  and for any $r\in\nat$ we have
\begin{equation}
\begin{split}
\lim_{N\to\infty}&\frac{1}{\mcL(\mcQ_{N})}\int_{\mG_{N}}\langle g_{(\mcQ_{N},x)},J_{(\mcQ_N,x),r}S_{(\mcQ_N,x)_r}(t)P_{(\mcQ_N,x),r}f_{(\mcQ_{N},x)}\rangle_{\mcH_{(\mcQ_{N},x)}} \dx\\
&=\int_{\mbKs}  \langle g_{(\mcQ,x)},J_{(\mcQ,x),r}S_{(\mcQ,x)_r}(t)P_{(\mcQ,x),r}f_{(\mcQ,x)}\rangle_{\mcH_{(\mcQ,x)}}\,\mathrm{d}\bbP([\mcQ,x])
\end{split}
\end{equation}
holds. Applying now Theorem \ref{thm:heatsgrconv} together with Lebesgue's dominated convergence theorem, we obtain the first convergence to the integral \eqref{eq:Kstarintegral}.\vspace{0.5cm}

To verify the second convergence, observe that $\mcQ_N$ is finite for each $N\in\nat$, implying
\begin{equation}
\begin{split}
\lim_{r\to\infty}&\frac{1}{\mcL(\mcQ_{N})}\int_{\mG_N}  \langle g_{(\mcQ_N,x)},J_{(\mcQ_N,x),r}S_{(\mcQ_N,x)_r}(t)P_{(\mcQ_N,x),r}f_{(\mcQ_N,x)}\rangle_{\mcH_{(\mcQ_N,x)}}\,\mathrm{d}x\\
&=\frac{1}{\mcL(\mcQ_{N})}\int_{\mG_N}  \langle g_{(\mcQ_N,x)}, S_{(\mcQ_N,x)}(t)f_{(\mcQ_N,x)}\rangle_{\mcH_{(\mcQ_N,x)}}\,\mathrm{d}x,
\end{split}
\end{equation}
since the sequence becomes constant in $r$. Thus, by assumption on the Benjamini--Schramm convergence of $(\mcQ_N)$, the claim for the second convergence follows.
\end{proof}    

By \cite[Cor.~3.7]{AISW2021} and the above theorem, the next claim follows.

\begin{cor}\label{cor:QNkdoublelimit}
    Let $D\in\nat$, $0<m\leq M$ and $(\mcQ_N)$ be a sequence of finite quantum graphs satisfying \eqref{eq:QstarDmMK}. Let $f,g\in\mcH$ and $t>0$ be given. Then there exist a subsequence $(\mcQ_{N_k})$ and a probability measure $\bbP\in\mcP(\mbKs)$ such that $(\mcQ_{N_k})$ converges to $\bbP$ in the sense of Benjamini--Schramm and
    \begin{align}
    \lim_{r\to\infty}\lim_{k\to\infty}\frac{1}{\mcL(\mcQ_{N_k})}&\int_{\mG_{N_k}}\langle g_{(\mcQ_{N_k},x)},J_{(\mcQ_{N_k},x),r}S_{(\mcQ_{N_k},x)_r}(t)P_{(\mcQ_{N_k},x),r}f_{(\mcQ_{N_k},x)}\rangle_{\mcH_{(\mcQ_{N_k},x)}} \dx\notag\\
    =\lim_{k\to\infty}\lim_{r\to\infty}\frac{1}{\mcL(\mcQ_{N_k})}&\int_{\mG_{N_k}}\langle g_{(\mcQ_{N_k},x)},J_{(\mcQ_{N_k},x),r}S_{(\mcQ_{N_k},x)_r}(t)P_{(\mcQ_{N_k},x),r}f_{(\mcQ_{N_k},x)}\rangle_{\mcH_{(\mcQ_{N_k},x)}} \dx\notag\\
    =&\int_{\mbKs}  \langle g_{(\mcQ,x)},S_{(\mcQ,x)}(t)f_{(\mcQ,x)}\rangle_{\mcH_{(\mcQ,x)}}\,\mathrm{d}\bbP([\mcQ,x]).
\end{align}
\end{cor}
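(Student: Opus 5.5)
The plan is to combine the compactness statement underlying Corollary \ref{cor:FBSsubseq} with Theorem \ref{thm:QNdoublelimit}. First, since every $\mcQ_N$ satisfies \eqref{eq:QstarDmMK}, each root-averaged measure $\nu_{\mcQ_N}$ is supported in $\mbKs$. We invoke \cite[Cor.~3.7]{AISW2021}, in the form adapted to $\mbKs$ exactly as in the proof of Corollary \ref{cor:FBSsubseq}, i.e.\ via the modification of \cite[Lem.~3.6]{AISW2021}. This gives tightness of $(\nu_{\mcQ_N})$ and hence a subsequence $(\mcQ_{N_k})$ and a probability measure $\bbP$ with $\nu_{\mcQ_{N_k}}\to\bbP$ weakly-$*$. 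Because $\mbKs$ is closed in $(\mbQs,d)$, the portmanteau theorem gives $\bbP(\mbKs)\geq\limsup_k\nu_{\mcQ_{N_k}}(\mbKs)=1$, so $\bbP\in\mcP(\mbKs)$. Hence $(\mcQ_{N_k})$ converges in the sense of Benjamini--Schramm to $\bbP$. The subsequence and $\bbP$ are chosen independently of $f,g,t$, which is consistent with the order of quantifiers in the statement.

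Second, the subsequence $(\mcQ_{N_k})_{k}$ is itself a sequence of finite quantum graphs satisfying \eqref{eq:QstarDmMK} and Benjamini--Schramm converging to $\bbP\in\mcP(\mbKs)$. So Theorem \ref{thm:QNdoublelimit} applies verbatim with $N$ replaced by $N_k$. This yields both iterated limits and their common value, the integral of $\langle g_{(\mcQ,x)},S_{(\mcQ,x)}(t)f_{(\mcQ,x)}\rangle$ against $\bbP$.

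The only genuinely delicate step is the first one, namely ensuring that the limit measure lives on $\mbKs$ rather than merely on $\mbQs$. This matters because the continuity of $F_{f,g,t}$ and $F_{f,g,t,k}$ from Theorem \ref{prop:Ffgtrcontbdd} was established only on $\mbKs$. I would handle it as above: closedness of $\mbKs$ (the remark preceding \eqref{eq:mcHell2}) together with the portmanteau inequality for closed sets. Alternatively, one can view all measures as elements of $\mcP(\mbKs)$, where $\mbKs$ is itself a complete separable metric space, and apply Prokhorov's theorem there. Tightness on $\mbKs$ then comes from the uniform bounds in \eqref{eq:QstarDmMK}: bounded degree, edge lengths in $[m,M]$, and potentials uniformly Lipschitz. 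By Arzelà--Ascoli, the set of possible $r$-ball data is compact for each $r$, which is precisely the mechanism behind \cite[Lem.~3.6]{AISW2021}.
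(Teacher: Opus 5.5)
Your proposal is correct and follows the paper's route: it extracts a Benjamini--Schramm convergent subsequence with limit $\bbP\in\mcP(\mbKs)$ via the $\mbKs$-adapted version of \cite[Cor.~3.7]{AISW2021} (as in Corollary \ref{cor:FBSsubseq}), then applies Theorem \ref{thm:QNdoublelimit} to that subsequence, and your portmanteau argument via the closedness of $\mbKs$ makes explicit a step the paper leaves implicit. One small caveat on your alternative Prokhorov sketch: the $r$-ball data are not literally compact, because the two split edges at the root have lengths $x$ and $L_{\me_0}-x$ with no positive lower bound, so tightness there also needs the observation that the $\nu_{\mcQ_N}$-mass of roots with root margin below $\delta$ is at most $2\delta/m$.
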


\section{Examples of Benjamini--Schramm convergence}\label{sec:examplesBSlimit}

We close by mentioning three standard families of examples to which our results apply. In all of them the quantum graphs carry unit edge lengths, potential $W\equiv 0$, and continuity and Kirchhoff vertex conditions, so that \eqref{eq:QstarDmMK} holds with $m=M=1$ and the appropriate degree bound $D$.

\begin{exs}
(a) \emph{Cycles.} For $N\geq 3$ let $\mcQ_N$ be the cycle on $N$ vertices, with the labeling in which every vertex labels its two outgoing bonds according to a fixed orientation of the cycle. For $N>2r+2$, the labeled $r$-ball around any root of $\mcQ_N$ coincides (including all data) with the corresponding $r$-ball of the doubly infinite path $\mcQ_{\bbz}$ carrying the analogous orientation labeling. Hence $\left(\nu_{\mcQ_N}\right)$ converges weakly-* to the probability measure obtained by rooting $\mcQ_{\bbz}$ uniformly at a point of one (any) of its edges, thus Theorem \ref{thm:QNdoublelimit} applies.

(b) \emph{Discrete tori.} Similarly, let $\mcQ_N$ be the $N\times N$ discrete torus with the direction labeling (four fixed labels for the bonds pointing in the four coordinate directions). For $N>2r+2$ the labeled $r$-balls agree with those of the planar lattice $\bbz^2$, so $\left(\nu_{\mcQ_N}\right)$ converges weakly-* to $\bbz^2$ (with the direction labeling) rooted uniformly at a point of an edge.

(c) \emph{Random regular graphs.} Let $\mcQ_N$ be a uniformly random $d$-regular graph on $N$ vertices ($d\geq 3$, with $dN$ even so that such graphs exist), with labelings chosen uniformly at random. The random $d$-regular graph converges locally in probability to the rooted infinite $d$-regular tree, see \cite[Thm.~2.17]{Hofstad2024} or \cite[Examp.~19.7]{LovaszBook}. Consequently, $\left(\nu_{\mcQ_N}\right)$ converges weakly-* in probability to the distribution of the infinite $d$-regular tree with independent uniform labelings, rooted uniformly at a point of an edge. In particular, for this sequence Theorem \ref{thm:QNdoublelimit} applies.
\end{exs}



\end{document}